\newcommand{\eps}{\varepsilon}
\newcommand{\eqcolon}{\mathrel{\mathord{=}\raise.2\p@\hbox{:}}}
\newcommand{\coloneq}{\mathrel{\raise.2\p@\hbox{:}\mathord{=}}}
\newcommand{\der}{\delta}
\newcommand{\dd}{\mathrm{d}}
\newcommand{\RR}{\mathbb{R}}
\newcommand{\cF}{\mathscr{F}}
\newtheorem{theorem}{Theorem}[section]
\newtheorem{corollary}[theorem]{Corollary}
\newtheorem{definition}[theorem]{Definition}
\newtheorem{lemma}[theorem]{Lemma}
\newtheorem{proposition}[theorem]{Proposition}
\newtheorem{remark}[theorem]{Remark}
\begin{document}
\title{Support theorem for a singular SPDE: the case of gPAM}
\author{K.Chouk, P.K.Friz\\ TU Berlin, TU and WIAS Berlin}

\maketitle
\begin{abstract}
We consider the generalized parabolic Anderson equation (gPAM) in 2 dimensions with periodic boundary. This is an example of a singular semilinear stochastic partial differential equation in the subcritical regime, with (renormalized) solutions only recently understood via Hairer's regularity structures and, in some cases equivalently, paracontrollled distributions by Gubinelli, Imkeller and Perkowski. In the present paper we utilise the paracontrolled machinery and obtain a (Stroock--Varadhan) type support description for the law of gPAM. In the spirit of rough paths, the crucial step is to identify the support of the enhanced noise in a sufficiently fine topology. 
\end{abstract}
\tableofcontents

\section{Introduction}
In a major recent advance, carried out  independently (and with different techniques) by Hairer \cite{hairer_theory_2013} and 
Gubinelli, Imkeller and Perkowski  \cite{gubinelli_paraproducts_2012}  it was understood how to make rigorous sense of a number of important singular semi-linear {\it stochastic partial differential equations} (SPDEs) arising in mathematical physics.  While Hairer's theory of regularity structures can handle more general classes of such highly irregular SPDEs, the two theories yield essentially equivalent results in  a number of interesting cases, including the (generalized) parabolic Anderson model in a {spatial continuum} of dimension $2$, on which this article will focus. More specifically, we consider a
 solution $u:\mathbb R^+\times\mathbb T^2\to\mathbb R$ to the following SPDE (cf. Theorem~\ref{th:Pam-exis} below)
\begin{equation}  \label{eq:Cauchy}
\left\{
\begin{split}
&\mathscr L u=f(u)\xi
\\&u(0,x)=u_0(x)\in\mathcal C^{\alpha}(\mathbb T^2).
\end{split}
\right.
\end{equation}
\noindent Here $\mathscr L=\partial_t-\Delta$ is the heat-operator, $\Delta$ the Laplacian on the two dimensional torus $\mathbb T^2$,  $\mathcal C^{\alpha}(\mathbb T^2)$ is the Besov space $\mathscr B^\alpha_{\infty,\infty}$(see~\eqref{eq:Besov} for the exact definition), $f \in C^3(\mathbb R; \mathbb R)$ a three times differentiable function, and at last $\xi$ is {\it spatial white noise} with (for convenience) {\it zero spatial mean}; that is $\xi$ is a centred Gaussian field with%
\footnote{More precisely, $\xi$ is a centered Gaussian field indexed by $L^2(\mathbb T^2)$ so that $\mathbb E[(\xi,\varphi)(\xi,\psi)]=  (\varphi,\psi) - (\varphi,1)(\psi,1)$.  Note that this covariance structure indeed implies zero (spatial) mean, i.e. $\int_{\mathbb T^2}\xi(x)\dd x=0$ almost surely.}
$$
\mathbb E[\xi(x)\xi(y)]=\der(x-y) - 1.
$$ 


%
%
%

The aim of this paper is to give a characterization for the topological support of the law of the solution $u$ in a suitable H\"older-Besov space.

\vspace{0.6cm}

Even in the well-understood and classical context of { stochastic
differential equations} (SDEs), such a  ``support theorem" is a deep result and was first obtained in a seminal paper by Stroock--Varadhan \cite{StVa72}, many extensions
and alternative proofs followed. A basic observation, used in virtually all this works, is that Wong--Zakai approximations give the ``easy" inclusion in the support theorem. Most relevant for us, Lyons' {\it rough path theory} \cite{L98, LQ02, LCL07} has provided a ``robust" view on SDE theory which subsequently led to decisive  proofs of the Stroock--Varadhan support theory:  the problem is reduced to establish the support characterization for the enhanced noise, in sufficiently strong topologies, upon which the solution depends in a continuous fashion. This strategy of proof was carried our first by Ledoux, Qian and Zhang \cite{LQZ02}, see also Friz, Lyons and Stroock \cite{FLS} and the references in \cite[Ch.19]{FrizVictoir}.

\vspace{0.5cm}
 
The theories of regularity structures and paracontrolled distributions, both inspired by rough path theory, provide an equally ``robust" view on the  classes of SPDEs, which they helped to define in the first place. A similar route towards support characterizations should then be possible. To this end, a number of technical problems need to be overcome, the perhaps most immediate being the divergence of ``Wong--Zakai" approximations\footnote{See Hairer-Pardoux \cite{HP14} for a study of renormalized Wong--Zakai approximations.} due to an infinite It\^o--Stratonovich correction. (Such a problem was already encountered in the classical literature for SPDEs, in particular the work of Bally, Millet and Sanz-Sol\'e~\cite{A.milletM.Sole} appears close to ours in spirit, although of course both techniques and classes of considered SPDEs are entirely different.)

\vspace{0.5cm}

It is clear that our general d\'emarche invites generalizations to other SPDEs where the theory of regularity structures or paracontrolled distributions can be employed, notably the
three dimensional stochastic quantization equation $\Phi_3^4$ ~\cite{hairer_theory_2013, CC13}, the KPZ equation~\cite{hairer_solving_2013} and \cite[Ch.15]{FH14} and its generalizations
presently studied by Bruned, Hairer and Zambotti. Indeed, it would be very desirable, although we believe this is presently out of reach, to have a ``general support theorem" that applies to all (local) solutions to subcritical SPDEs. A more realistic programme 
consists of tackling each singular SPDE of interest with a tailor-made analysis, inspired/extending the one presented in this paper in the case of gPAM \cite{hairer_theory_2013, gubinelli_paraproducts_2012}. For instance, we are convinced that $\Phi_3^4$, with paracontrolled analysis due to  ~\cite{CC13}, can be dealt within a reasonably similar framework even though the details seem to require a considerable additional effort.\footnote{The situation may be compared to the support theorem for fractional Brownian rough paths, 
cf. \cite{FrizVictoir} and the references therein: the case $H \in (1/4,1/3]$, which requires a ``level-3'' enhancement of the noise, is well-known to be substantially more involved than the case $H \in (1/3,1/2]$.}
%
%
%
%
%

%

\vspace{0.6cm}

Having explained our focus on gPAM, we recall the basic issue with this model given by equation \eqref{eq:Cauchy}: the problem is that $\xi$ is too rough to have a well-defined product $f(u)\xi$. Indeed, it is well-known that
 $\xi\in\mathcal C^{-1-\delta}(\mathbb T^2)$ a.s. for all $\delta>0$, and no better, which implies at least formally that at best $u(t,.)\in\mathcal C^{1-\delta}$ a.s., in view of  regularization properties of the heat flow. It is well-known from harmonic analysis that Schwartz distributions in such Besov--H\"older spaces can be 
 multiplied only if the exponents add up to a positive number, which is plainly not the case here and leaves one with the ill-defined product  $f(u) \xi$. 
 If one proceeds by brute force approximation arguments, one quickly finds that the limiting equations is not \eqref{eq:Cauchy} but of the (non-sensical) form
  \begin{equation}
\left\{
\begin{split}
& \mathscr Lu=f(u)\xi -\infty (....)
\\&u(0,x)=u_0(x)\in\mathcal C^{\alpha}(\mathbb T^2).
\end{split}
\right.     \label{eq:NSCauchy}
\end{equation}
 This problem has been treated in  the simple case of $f(u)=u$ in~\cite{Hu} by interpreting the product as the Wick product generated by the Gaussian structure of the white noise and then a chaos expansion of the solution is obtained. For the case of general $f$, no such trick will work.
 
 \vspace{0.6cm} 
 
 As already mentioned, recently two different approaches have been developed to deal with this singular SPDE. One is based on the theory of 
 regularity structure due to M.Hairer~\cite{hairer_theory_2013}, the other one the paracontrolled distribution approach due to  Gubinelli, Imkeller and Perkowski ~\cite{gubinelli_paraproducts_2012}. In the latter, the authors use the Bony paraproduct (see~\cite{BCD-bk,bony}) to obtain a space of distributions which admit some sort of Taylor expansion 
 where in a sense the pointwise product is replaced by the Bony paraproduct, and which is ultimately seen to contain the solution $u$ of the equation, with good properties vis a vis to the afore-mentioned multiplication $f(u)\xi$.
 
 Both theories yield a notion of {\it renormalized} solution for \eqref{eq:Cauchy}, {\it  local in time}, obtained as limit 
 $ u  = \lim_{\eps \to 0} u^\eps$, limit taken in  $C([0,\tau)],\mathcal C^{\alpha})$, $\alpha<1$, with explosion time $\tau = \tau (\omega) >0$ a.s.,
 \begin{equation}
\left\{
\begin{split}
& \mathscr Lu^\eps=f(u^\eps)\xi^\eps-c_\eps f'(u^\eps)f(u^\eps)
\\&u(0,x)=u_0(x)\in\mathcal C^{\alpha}(\mathbb T^2),
\end{split}
\right.     \label{eq:NSCauchy2}
\end{equation}
with $\rho$ a suitable mollifier function, $\xi^\eps=\eps^{-2}\rho(\frac{\cdot}{\eps})\star\xi $ mollified white noise and diverging constants $c_\eps = c_\eps(\rho)$, see Theorem~\ref{th:Pam-exis}. (We insist that the limit $u$ does not depend on the choice of $\rho$.)  Assuming {\it non-explosion} the above convergence takes place in $C([0,T],\mathcal C^{\alpha})$, for any fixed $T>0$. 
We note that non-explosion holds in the linear case $f(u)=u$; a non-explosion condition for non-linear $f$ was recently given in \cite{CFG15x}
and includes the case of compactly supported $f \in C_c^{3}(\mathbb R)$.
%
%
Let us remark that the assumption of non-explosion is not essential for our work, however it removes the need for attaching a cemetery state to the state space (as done, also in the context of a singular SPDE, in \cite{hairer_weber_LDP}).

Let us also emphasize that, $u  = \lim_{\eps \to 0} u^\eps $ should not be considered as the only solution to the (formal) Cauchy problem for \eqref{eq:Cauchy}: given a real constant $a$, replacing $c_\eps$ by $\tilde{c}_\eps \equiv c_\eps + a$ in equation
\eqref{eq:NSCauchy2}  one indeed gets a (in general different) limit $\tilde{u} =  \lim_{\eps \to 0} \tilde{u}^\eps$, which may also be regarded as solution to  \eqref{eq:Cauchy}. 
\newpage
Writing $u[a]  \equiv \tilde{u}$, this is effectively a reflection of the renormalization group, here $a \in (\mathbb{R},+)$, which acts on renormalized solutions of this SPDE, point of view emphasized in \cite{hairer_theory_2013}.

%
%
%
%

 \vspace{0.4cm}

Let $\mathcal H$ be the Cameron-Martin space associated to $\xi$ i.e. the set of $f\in L^2(\mathbb T^2)$ with zero-mean, $\int_{\mathbb T^d}f(x)\dd x=0$. Define the separable space $C^{0,{\alpha}}(\mathbb T^d)$ as the closure of smooth functions in $C^{\alpha}(\mathbb T^d)$. Assuming non-explosion, the law of $u$ can then be regarded as (Borel-)meausre on the (Polish) space $C([0,T],\mathcal C^{0,\alpha}(\mathbb T^2))$. We are now ready to state our main result. 
  
\begin{theorem}\label{th:main-result}
Let $T>0$, $\alpha\in(2/3,1)$, $u_0\in\mathcal C^{0,\alpha}(\mathbb T^d)$ and $f\in C^3(\mathbb R)$. Assuming non-explosition, denote by  $u=u[0]$ the solution 
of the Cauchy problem~\eqref{eq:Cauchy} given by Theorem~\ref{th:Pam-exis} and by $u_\star\mathbb P$ the law of $u$ in $C([0,T],\mathcal C^{0,\alpha}(\mathbb T^2))$. Then we have, with the closure below taken in $C([0,T],\mathcal C^{0,\alpha}(\mathbb T^2))$,
\begin{equation} \label{suppp}
\begin{split}
\text{supp}(u_\star\mathbb P)&=\overline{\left\{\mathscr S(u_0,h,c),\quad h\in \mathcal H, c>0\right\}},
\end{split}
\end{equation}
where $\mathscr S(u_0,h,c)=v$ is the classical solution to (cf. Proposition \ref{prop:class} in the Appendix) 
\begin{equation} \label{equ:5}
\left\{
\begin{split}
& \mathscr Lv=f(v) h-cf'(v)f(v)
\\&v(0,x)=u_0(x)\in\mathcal C^{0,\alpha}(\mathbb T^2).
\end{split}
\right.
\end{equation} 
At last, the support is invariant under the action of the renormalization group in the sense that,
\begin{equation}
\label{invariance}
\text{supp}((u[a])_\star\mathbb P) = \text{supp}(u_\star\mathbb P), \, \, \, \text{for any  } a \in \mathbb{R}.
\end{equation}
\end{theorem}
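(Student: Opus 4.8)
The plan is to transport the rough-path proof of Stroock--Varadhan to the paracontrolled setting by reducing everything to the level of the \emph{enhanced noise}. Write $\mathbb{X}=(\xi,\Pi)$ for the enhanced noise associated with $\xi$, where $\Pi$ is the renormalized resonant product furnished by the solution theory of Theorem~\ref{th:Pam-exis}, living in a Polish data space $\mathscr{X}$ of H\"older--Besov distributions, and let $\Psi(\cdot,u_0)\colon\mathscr{X}\to C(\ott,\mathcal{C}^{0,\alpha})$ denote the associated solution map, which is continuous and satisfies $u=\Psi(\mathbb{X},u_0)$. Writing $\mathbb{X}(h,c)$ for the canonical smooth lift of $h\in\mathcal{H}$ carrying renormalization constant $c$, consistency of the paracontrolled solution with the classical one (Proposition~\ref{prop:class}) gives $\mathscr{S}(u_0,h,c)=\Psi(\mathbb{X}(h,c),u_0)$. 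First I would invoke the elementary fact that a continuous map $\Psi$ between Polish spaces obeys $\text{supp}(\Psi_\star\mu)=\overline{\Psi(\text{supp}\,\mu)}$ for any Borel law $\mu$; applied to $\mu=\mathbb{X}_\star\mathbb{P}$ this reduces \eqref{suppp} to the purely probabilistic identity
\begin{equation}\label{eq:noise-supp}
\text{supp}(\mathbb{X}_\star\mathbb{P})=\overline{\{\mathbb{X}(h,c):h\in\mathcal{H},\ c>0\}},
\end{equation}
the closure taken in $\mathscr{X}$. Identifying this support in the \emph{fine} topology of $\mathscr{X}$ is, as the abstract stresses, the crux of the matter.

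For the inclusion ``$\overline{\{\mathbb{X}(h,c)\}}\subseteq\text{supp}(\mathbb{X}_\star\mathbb{P})$'' I would fix $(h,c)$ and exhibit $\mathbb{X}(h,c)$ as a limit of $\mathbb{X}$ over events of positive probability. The first level is produced by the Cameron--Martin shift $\xi\rightsquigarrow\xi+h$: by quasi-invariance of the Gaussian law $\text{supp}(\mathbb{X}_\star\mathbb{P})=\text{supp}((\mathbb{X}(\cdot+h))_\star\mathbb{P})$, while multilinearity of the enhancement decomposes the shifted second level into the renormalized pure-noise term, smooth cross terms, and the classical resonance $h\circ\mathcal{I}h$ (with $\mathcal{I}=\mathscr{L}^{-1}$). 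The prescribed second-level constant is then matched on the positive-probability event on which $\xi$ is small at the first level while the renormalized diagonal $\Pi$ is close to the requisite constant; this is the paracontrolled analogue of steering the L\'evy area, and the sign restriction $c>0$ is inherited from the positivity of the diverging renormalization constant $c_\eps\to+\infty$.

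The reverse inclusion ``$\text{supp}(\mathbb{X}_\star\mathbb{P})\subseteq\overline{\{\mathbb{X}(h,c)\}}$'' amounts to showing that $\mathbb{P}$-almost surely $\mathbb{X}$ is a limit, in $\mathscr{X}$, of canonical lifts $\mathbb{X}(h_n,c_n)$ of Cameron--Martin elements. Here I would take $h_n$ a Fourier truncation of $\xi$ projected onto $\mathcal{H}$, whose smooth resonance $h_n\circ\mathcal{I}h_n$ diverges like the renormalization constant $c_n\to+\infty$, so that $h_n\circ\mathcal{I}h_n-c_n\to\Pi$; the control constants $c_n>0$ are admissible precisely because the subtraction is positive. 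That these renormalized approximations converge to $\mathbb{X}$ in the fine topology of $\mathscr{X}$ is the genuine renormalization statement underlying the whole scheme, and I expect it to be the main obstacle: it rests on Wiener-chaos/hypercontractivity estimates controlling the second-chaos object $\Pi$ and its approximants uniformly in the relevant Besov norms. By comparison the first-level convergence $h_n\to\xi$ is routine.

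Finally, for the renormalization-group invariance \eqref{invariance}, I would note that passing from $u=u[0]$ to $u[a]$ replaces $c_\eps$ by $c_\eps+a$ in \eqref{eq:NSCauchy2}, which at the level of the enhanced noise is the deterministic translation $\mathbb{X}\mapsto\mathbb{X}+(0,-a)$ of the second component, so that $u[a]=\Psi(\mathbb{X}+(0,-a),u_0)$. The decisive point is that the closure in \eqref{eq:noise-supp} is stable under such constant shifts: combining the free parameter $c$, available up to $+\infty$, with high-frequency perturbations $\eta_N\to0$ whose positive self-resonance $\eta_N\circ\mathcal{I}\eta_N$ shifts the second-level constant, one reaches every real value of that constant in the closure, whence $\overline{\{\mathbb{X}(h,c):c>0\}}=\overline{\{\mathbb{X}(h,c):c\in\mathbb{R}\}}$. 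Applying the pushforward identity and the continuity of $\Psi$ to this shift-invariant set yields $\text{supp}((u[a])_\star\mathbb{P})=\text{supp}(u_\star\mathbb{P})$ for all $a\in\mathbb{R}$.
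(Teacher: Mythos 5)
Your overall architecture coincides with the paper's: reduce, via continuity of the solution map $\mathscr S_r$ and the pushforward identity $\text{supp}(\Psi_\star\mu)=\overline{\Psi(\text{supp}\,\mu)}$, to a support theorem for the enhanced noise (Theorem~\ref{th:supp-pam}); get the upper inclusion from the a.s.\ convergence of renormalized canonical lifts (Theorem~\ref{th:Pam-exis}, which is the quoted input from Gubinelli--Imkeller--Perkowski rather than the new obstacle you take it to be); and obtain \eqref{invariance} from $u[a]=\mathscr S_r(u_0,\Xi^{pam}+(0,-a))$ together with stability of the closure under constant shifts of the second component, which is exactly the paper's oscillator argument (Lemma~\ref{lemma:pure-area} and the proposition following it). The genuine gap is in the lower inclusion $\overline{\{\mathscr M(h,c)\}}\subseteq\text{supp}((\Xi^{pam})_\star\mathbb P)$: you dispatch it by invoking ``the positive-probability event on which $\xi$ is small at the first level while the renormalized diagonal $\Pi$ is close to the requisite constant''. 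But the existence of such an event with positive probability \emph{is} the assertion $(0,-c)\in\text{supp}((\Xi^{pam})_\star\mathbb P)$, i.e.\ the core of what must be proven; assuming it makes the argument circular. Since $\Pi=\Xi^{pam,2}$ is a second-chaos functional of $\xi$ and \emph{not} a continuous function of it (this is the very reason renormalization is needed), no soft Gaussian-support or quasi-invariance argument yields this event, and Lemma~\ref{ref:strict} shows concretely that configurations with small first level and second level near a negative constant cannot be reached by naive unrenormalized lifts.

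What the paper does at this point, and what your proposal is missing, is a three-step construction: (i) the a.s.\ identification $T_h\Xi^{pam}(\omega)=\Xi^{pam}(\omega+h)$ for $h\in\mathcal H$, upgrading Cameron--Martin quasi-invariance to invariance of the support under the second-order translations $T_h$ (Corollary~\ref{cor:inv-tran-1}); (ii) for a fixed typical realization $\omega$, translation by $h^n=-\xi^n(\omega)+X^{n,c_n-a}$, where $\xi^n$ is a sharp Fourier truncation of the \emph{realized} noise and $X^{n,c_n-a}$ is the oscillator of Lemma~\ref{lemma:pure-area}, whose self-resonance $X^{n,c_n-a}\circ KX^{n,c_n-a}\approx c_n-a$ pays for the divergent constant $c_n$ created by removing $\xi^n$; (iii) the mixed-resonance convergences $\xi\circ K\xi^n-c_n\to\Xi^{pam,2}$ and $\xi^n\circ K\xi-c_n\to\Xi^{pam,2}$ of Lemma~\ref{lemma:mixed-conv}, proved by Wick/hypercontractivity estimates, which make the random second-level contributions cancel after the shift, so that $T_{-\xi^n+X^{n,c_n-a}}\Xi^{pam}\to(0,-a)$ in probability (Lemma~\ref{lemma:con-cons}); then $(0,-a)\in\text{supp}$, and $T_h$-invariance plus closedness give all of $\mathscr X^\alpha$. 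You actually possess both ingredients --- Cameron--Martin translations and oscillators --- but you deploy the oscillators only in the invariance step; without combining them as in (ii)--(iii), the lower bound remains unproven. A minor further point: your claim that the restriction $c>0$ ``is inherited from the positivity of $c_\eps$'' is misleading, since by the oscillator argument the closures over $c>0$ and over $c\in\mathbb R$ coincide; what genuinely fails is pinning $c=0$ (Lemmas~\ref{ref:strict} and~\ref{lemma:non}).
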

We remark that the infinite term  in \eqref{eq:NSCauchy} is replaced by a finite expression in \eqref{equ:5}, of the
form $cf'(u)f(u)$, and this is a key aspect in the analysis. It should be noted that, in general, the constant $c$ which appears in (\ref{suppp}) and ranges over all positive reals\footnote{What actually matters is that $c$ ranges over a set which has $+\infty$ as accumulation point.}  cannot be omitted; cf. Lemma ~\ref{lemma:non} (and also Lemma ~\ref{ref:strict}). This is in contrast to the results of Bally, Millet and Sanz-Sol\'e~\cite{A.milletM.Sole} where an infinite constant was effectively set to zero. The underlying reason is that they deal with space-time white noise whereas in PAM case we have purely spatial noise. 

{\bf Acknowledgement:} KC and PKF acknowledge support by the European Research Council under
the European Union's Seventh Framework Programme (FP7/2007-2013) / ERC grant agreement nr. 258237. 
PKF acknowledges support from DFG research unit FOR2402.

\section{Well-posedness result for the parabolic Anderson equation}                
\subsection{Besov spaces and Bony paraproduct}
Before stating the main result of~\cite{gubinelli_paraproducts_2012} about the parabolic Anderson equation let us collect some definition and basics facts about the Besov space.
Let $\chi$ and $\rho$ be a nonnegative smooth radial functions such that
\begin{enumerate}
	\item The support of $\chi$ is contained in a ball and the support of $\rho$ is contained in an annulus;
	\item $\chi(\xi)+\sum_{j\ge0}\rho(2^{-j}\xi)=1 \mathrm{\ for\ all}\ \xi\in\RR^d$;
	\item $\mathrm{supp}(\chi) \cap \mathrm{supp}(\rho(2^{-j}.)) = \emptyset$ for $i \ge 1$ and $\mathrm{supp}(\rho(2^{-i}.)) \cap \mathrm{supp}(\rho(2^{-j}.)) = \emptyset$ when $|i-j| > 1$.
\end{enumerate}
(for the existence of such a function see \cite{BCD-bk}, Proposition 2.10.). Then the Littlewood-Paley blocks are defined by:
$$
\Delta_{-1} u = \cF^{-1}(\chi \cF u)\ \mathrm{and\ for}\ j \ge 0, \Delta_j u = \cF^{-1}(\rho(2^{-j}.)\cF u).$$
Where $\mathscr Ff$ is the Fourier transform of a distribution $f\in\mathscr S'(\mathbb R^d)$.
\\
\\
We define the Besov space of distributions by : 
\begin{equation}\label{eq:Besov}
\mathscr B_{p,q}^{\alpha}=\left\{u\in \mathscr S'(\mathbb R^d); \quad ||u||^q_{\mathscr B_{p,q}^{\alpha}}=\sum_{j\geq-1}2^{jq\alpha}||\Delta_ju||^q_{L^p}<+\infty\right\}.
\end{equation}
In the sequel we will deal extensively with the special case of $\mathcal C^{\alpha}:=\mathscr B_{\infty,\infty}^{\alpha}$ and the Sobolev space $H^\alpha:=\mathscr B_{2,2}^\alpha$ and we write $||u||_{\alpha}=||u||_{\mathscr B_{\infty,\infty}^{\alpha}}$. Let us also introduce the space $\check{H}^{\alpha}(\mathbb R^d)$ (respectively $\check{\mathcal C}^\alpha(\mathbb R^d)$) of distributions $f\in H^\alpha(\mathbb R^d)$(respectively $f\in\mathcal C^\alpha(\mathbb R^d)$) such that $\hat f(0)=0$ equipped with the norm of $H^\alpha(\mathbb R^d)$ (respectively $\mathcal C^\alpha(\mathbb R^d)$) and we remark that $\check{H}^{0}=\mathcal H$.
At some point we will deal with stochastic objects and the trick is to work with Besov spaces with finite indexes and then go back to the space $\mathcal C^\alpha$. For that we have the following useful Besov embedding.
\begin{proposition}[Besov embedding]
\label{proposition:Bes-emb} 
Let $1\leq p_1\leq p_2\leq +\infty$ and $1\leq q_1\leq q_2\leq +\infty$. For all $s\in \mathbb R$ the space $\mathscr B_{p_1,q_1}^{s}$ is continuously embedded in $\mathscr B_{p_2,q_2}^{s-d(\frac{1}{p_1}-\frac{1}{p_2})}$. In particular we have $||u||_{\alpha-\frac{d}{p}}\lesssim||u||_{\mathscr B_{p,p}^{\alpha}}$.
 \end{proposition}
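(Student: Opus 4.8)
The plan is to reduce the embedding to two independent mechanisms: the change in the integrability index $p$ is governed by Bernstein's inequality applied to each Fourier-localized block, while the change in the summability index $q$ is handled by the elementary nesting $\ell^{q_1}\hookrightarrow\ell^{q_2}$ valid for $q_1\le q_2$. I set throughout $\sigma:=s-d(\frac1{p_1}-\frac1{p_2})$, the goal being to bound $\norm{u}_{\mathscr B_{p_2,q_2}^{\sigma}}$ by a constant multiple of $\norm{u}_{\mathscr B_{p_1,q_1}^{s}}$.

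First I would recall and, if needed, prove the \emph{Bernstein inequality}: if $g\in\mathscr S'(\RR^d)$ has $\cF g$ supported in a ball of radius $R$, then for $1\le a\le b\le\infty$ one has $\norm{g}_{L^b}\lesssim R^{d(\frac1a-\frac1b)}\norm{g}_{L^a}$, with constant independent of $R$ and $g$. This follows by writing $g=\psi_R\star g$ for a fixed Schwartz function $\psi$ whose Fourier transform equals $1$ on the relevant ball, rescaled as $\psi_R(x)=R^d\psi(Rx)$, and then applying Young's convolution inequality with the exponent relation $1+\frac1b=\frac1r+\frac1a$; the scaling $\norm{\psi_R}_{L^r}\sim R^{d(1-1/r)}=R^{d(1/a-1/b)}$ produces exactly the claimed gain. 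Applied to $g=\Delta_j u$, whose Fourier support lies in a ball of radius $\sim 2^j$ (an annulus for $j\ge0$, a ball for $j=-1$), this yields, for $p_1\le p_2$,
\begin{equation*}
\norm{\Delta_j u}_{L^{p_2}}\lesssim 2^{jd(\frac1{p_1}-\frac1{p_2})}\norm{\Delta_j u}_{L^{p_1}}.
\end{equation*}

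Multiplying by $2^{j\sigma}$ and inserting the definition of $\sigma$, the exponent of $2^j$ collapses to $s$, giving the termwise estimate $2^{j\sigma}\norm{\Delta_j u}_{L^{p_2}}\lesssim 2^{js}\norm{\Delta_j u}_{L^{p_1}}$ uniformly in $j\ge-1$. Taking $\ell^{q_2}$ norms of both sides over $j$, the right-hand sequence $(2^{js}\norm{\Delta_j u}_{L^{p_1}})_j$ is controlled in $\ell^{q_2}$ by its $\ell^{q_1}$ norm, since $q_1\le q_2$ forces $\ell^{q_1}\hookrightarrow\ell^{q_2}$ with norm one; the resulting right-hand side is then exactly $\norm{u}_{\mathscr B_{p_1,q_1}^{s}}$, which proves the embedding. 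The ``in particular'' assertion is the special case $p_1=p$, $p_2=\infty$, $q_1=p$, $q_2=\infty$, $s=\alpha$: then $\sigma=\alpha-d/p$, the hypotheses $p_1\le p_2$ and $q_1\le q_2$ both read $p\le\infty$, and one obtains $\norm{u}_{\alpha-d/p}=\norm{u}_{\mathscr B_{\infty,\infty}^{\alpha-d/p}}\lesssim\norm{u}_{\mathscr B_{p,p}^{\alpha}}$.

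The only genuinely analytic input is the Bernstein inequality, and even this is entirely standard (it is proved in this form in \cite{BCD-bk}); the remainder is bookkeeping with the two indices. I therefore do not anticipate any substantive obstacle, the sole point requiring care being to verify that the spatial gain $2^{jd(1/p_1-1/p_2)}$ furnished by Bernstein precisely cancels the loss of regularity built into $\sigma$, leaving the summation index to be absorbed by the trivial $\ell^q$ nesting.
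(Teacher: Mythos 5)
Your proof is correct and is exactly the standard argument: the paper states Proposition~\ref{proposition:Bes-emb} without proof, citing the literature (\cite{BCD-bk}, \cite{Tr}), and the proof given there is precisely your combination of Bernstein's inequality on each Littlewood--Paley block (yielding the gain $2^{jd(\frac1{p_1}-\frac1{p_2})}$ that cancels the regularity loss in $\sigma$) with the nesting $\ell^{q_1}\hookrightarrow\ell^{q_2}$. Your index bookkeeping, including the specialization $p_1=q_1=p$, $p_2=q_2=\infty$ for the ``in particular'' claim, checks out, so there is nothing to add.
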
   
Taking $f\in\mathcal C^{\alpha}$ and $g\in \mathcal C^{\beta}$ we can formally decompose the product as  
$$
fg=f\prec g+f\circ g+f\succ g
$$
with 
$$
f\prec g=g\succ f=\sum_{j\geq-1}\sum_{i<j-1}\Delta_if\Delta_jg\quad\text{(Paraproduct term)}
$$
and
$$
f\circ g=\sum_{j\geq-1}\sum_{|i-j|\leq 1}\Delta_if\Delta_jg \quad \text{(Resonating term).}
$$
With these notations the following results hold.
\begin{proposition}[Bony estimates \cite{bony}]
\label{proposition:Bony-estim}
Let $\alpha,\beta\in\mathbb R$. Then
\begin{itemize}
\item[(i)] For $f\in L^\infty$ and $g\in \mathcal C^\beta$
$$
||f\prec g||_{\beta}\lesssim||f||_{\infty}||g||_{\beta};
$$ 
\item[(ii)] for $\beta<0$, $f\in\mathcal C^\alpha$ and $g\in\mathcal C^\beta$
$$
||f\succ g||_{\alpha+\beta}\lesssim||f||_{\alpha}||g||_{\beta};
$$  
\item[(iii)] for  $\alpha+\beta>0$ and $f\in\mathcal C^\alpha$ and $g\in\mathcal C^\beta$ 
$$
||f\circ g||_{\alpha+\beta}\lesssim ||f||_{\alpha}||g||_{\beta}.
$$ Moreover if we have that $f\in\mathcal C^\alpha$ and $g\in H^\beta$ with $\alpha+\beta>0$ then 
$$
||f\circ g||_{\alpha+\beta-d/2}\lesssim||f||_{\alpha}||g||_{H^\beta}.
$$ 
\end{itemize}
\end{proposition}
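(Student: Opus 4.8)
The plan is to reduce each of the three estimates to the Littlewood--Paley definition \eqref{eq:Besov} of the Besov norm: I would bound $\|\Delta_k(\cdot)\|_{L^p}$ for each dyadic block $k$, then recombine the blocks against the weights $2^{k\alpha}$. The whole argument hinges on one elementary spectral fact, which I would record first: since $\cF(\Delta_i f\cdot\Delta_j g)=\cF(\Delta_i f)\ast\cF(\Delta_j g)$, the product of two spectrally localized pieces has Fourier support contained in the (Minkowski) sum of the two individual supports. It is the geometry of these sums---an annulus in the paraproduct case, a ball in the resonant case---that separates the three statements and dictates which sign condition on $\alpha,\beta$ is required.

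Write $S_{j-1}f:=\sum_{i<j-1}\Delta_i f$, so that $f\prec g=\sum_j S_{j-1}f\cdot\Delta_j g$. Since $S_{j-1}f$ is supported in a ball of radius $\lesssim 2^{j-1}$ and $\Delta_j g$ in an annulus of radius $\sim 2^j$, each summand is spectrally localized in an annulus of size $2^j$; hence $\Delta_k(f\prec g)$ collects only the finitely many indices $j$ with $|j-k|\le N$, for a fixed $N$. For (i) one uses $\|S_{j-1}f\|_\infty\lesssim\|f\|_\infty$ (uniformly in $j$, as $S_{j-1}$ is convolution against an $L^1$-normalized kernel), so that $\|\Delta_k(f\prec g)\|_{L^\infty}\lesssim\sum_{|j-k|\le N}\|f\|_\infty\|\Delta_j g\|_{L^\infty}$; multiplying by $2^{k\beta}\sim 2^{j\beta}$ and taking the supremum over $k$ gives the claim. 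For (ii) I would use the symmetric form $f\succ g=\sum_j\Delta_j f\cdot S_{j-1}g$, again annulus-localized at scale $2^j$; the difference is that the low-frequency factor must now be summed, $\|S_{j-1}g\|_\infty\lesssim\sum_{i<j-1}2^{-i\beta}\|g\|_\beta\lesssim 2^{-j\beta}\|g\|_\beta$, a geometric series that converges \emph{precisely because} $\beta<0$. Combined with $\|\Delta_j f\|_{L^\infty}\lesssim 2^{-j\alpha}\|f\|_\alpha$ and the weight $2^{k(\alpha+\beta)}$, this yields (ii).

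The resonant term is where the spectral geometry genuinely changes. Grouping $f\circ g=\sum_j\sum_{|i-j|\le1}\Delta_i f\,\Delta_j g$, the two annuli now overlap, so each summand has Fourier support in a \emph{ball} of radius $\sim 2^j$ rather than an annulus. Consequently $\Delta_k(f\circ g)$ picks up \emph{all} high frequencies $j\ge k-N$, and
$$
\|\Delta_k(f\circ g)\|_{L^\infty}\lesssim\sum_{j\ge k-N}\|\Delta_j f\|_{L^\infty}\|\Delta_j g\|_{L^\infty}\lesssim\|f\|_\alpha\|g\|_\beta\sum_{j\ge k-N}2^{-j(\alpha+\beta)}.
$$
The tail series sums to $\lesssim 2^{-k(\alpha+\beta)}$ exactly when $\alpha+\beta>0$, which after weighting by $2^{k(\alpha+\beta)}$ gives the $\mathcal C^{\alpha+\beta}$ bound. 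For the final $H^\beta$ variant I would run the same decomposition but measure the second factor in $L^2$, so that $\|\Delta_k(f\circ g)\|_{L^2}\lesssim\|f\|_\alpha\sum_{j\ge k-N}2^{-j\alpha}\|\Delta_j g\|_{L^2}$; writing this as a discrete convolution in $(k,j)$ with an $\ell^1$ kernel (decaying because $\alpha+\beta>0$) and applying Young's inequality shows $f\circ g\in H^{\alpha+\beta}$ with $\|f\circ g\|_{H^{\alpha+\beta}}\lesssim\|f\|_\alpha\|g\|_{H^\beta}$. The Besov embedding of Proposition~\ref{proposition:Bes-emb} with $p=2$ then converts this into $\|f\circ g\|_{\alpha+\beta-d/2}\lesssim\|f\circ g\|_{H^{\alpha+\beta}}\lesssim\|f\|_\alpha\|g\|_{H^\beta}$.

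The single point demanding care---and the main obstacle in making the sketch rigorous---is the bookkeeping of these Fourier supports. One must check that the Minkowski sum of a ball of radius $\sim 2^{j-1}$ with an annulus at scale $2^j$ is again an annulus at scale $2^j$ (so that the paraproduct blocks are almost orthogonal and only $|j-k|\le N$ survive), whereas for the resonant term the two overlapping annuli produce a full ball, forcing the one-sided sum $j\ge k-N$. Fixing the constant $N$ and verifying these inclusions is exactly what pins down why $\beta<0$ is needed in (ii) and $\alpha+\beta>0$ in (iii); the remaining work is the geometric summation of the block estimates carried out above.
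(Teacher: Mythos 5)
The paper does not actually prove this proposition: it is quoted verbatim from the literature (the citation \cite{bony}, with the torus adaptation deferred to \cite{Tr}), so there is no in-paper argument to compare against. Your blind proof is the canonical one found in those references (cf.\ Bahouri--Chemin--Danchin, Theorems 2.82 and 2.85), and it is correct in all four parts: the annulus localization of $S_{j-1}f\,\Delta_j g$ giving the almost-orthogonality $|j-k|\le N$ for (i) and (ii), the domination of the geometric series $\sum_{i<j-1}2^{-i\beta}\lesssim 2^{-j\beta}$ by its top term exactly when $\beta<0$ for (ii), the ball localization of the resonant blocks forcing the one-sided tail $\sum_{j\ge k-N}2^{-j(\alpha+\beta)}\lesssim 2^{-k(\alpha+\beta)}$ under $\alpha+\beta>0$ for (iii), and, for the mixed $\mathcal C^\alpha\times H^\beta$ variant, the $L^\infty\times L^2$ H\"older step followed by Young's inequality for the discrete convolution with the $\ell^1$ kernel $(2^{m(\alpha+\beta)})_{m\le N}$ and the embedding $\mathscr B^{\alpha+\beta}_{2,2}\hookrightarrow\mathscr B^{\alpha+\beta-d/2}_{\infty,\infty}$ of Proposition~\ref{proposition:Bes-emb}. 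The support bookkeeping you single out as the delicate point is precisely the content of the standard Bony-decomposition lemmas, and fixing the constant $N$ there is routine; nothing in your sketch would fail when made rigorous.
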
 
We finish this section by describing the action of the Fourier multiplier operator on the Besov spaces.
\begin{proposition}[Schauder estimate
]
\label{prop:multiply}
Let $m\in\mathbb R$ and $\psi$ a infinitely differentiable function on $\mathbb R^d-\{0\}$ such that $|D^k\psi(x)|\lesssim|x|^{-m-k}$ for all $k$. Then the following bound  
$$
||\psi(D)f||_{\alpha+m}\lesssim||f||_{\alpha}
$$
for $f\in\mathcal C^\alpha$ with $\psi(D)f=\mathscr F^{-1}(\psi\hat f)$. 
\end{proposition}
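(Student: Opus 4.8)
The plan is to work directly from the Littlewood--Paley characterization of the target norm, $\norm{\psi(D)f}_{\alpha+m}=\sup_{j\geq-1}2^{j(\alpha+m)}\norm{\Delta_j\psi(D)f}_{L^\infty}$, and to estimate each block separately. Since Fourier multipliers commute, I would first write $\Delta_j\psi(D)f=\psi(D)\Delta_j f$ and recall that for $j\geq0$ the block $\Delta_j f$ has Fourier support in a fixed annulus dilated by $2^j$. On that annulus one may freely replace $\psi$ by the compactly supported symbol $\psi_j:=\psi\,\tilde\rho(2^{-j}\cdot)$, where $\tilde\rho$ is smooth, supported in a slightly enlarged annulus, and equal to $1$ on $\mathrm{supp}(\rho)$; then $\psi(D)\Delta_j f=\psi_j(D)\Delta_j f=(\mathscr F^{-1}\psi_j)\ast\Delta_j f$, so Young's inequality gives $\norm{\Delta_j\psi(D)f}_{L^\infty}\leq\norm{\mathscr F^{-1}\psi_j}_{L^1}\norm{\Delta_j f}_{L^\infty}$. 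Everything for high frequencies then reduces to the uniform kernel bound $\norm{\mathscr F^{-1}\psi_j}_{L^1}\lesssim2^{-jm}$.

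This kernel estimate is the heart of the matter, and it is exactly here that the full hypothesis $\abs{D^k\psi(x)}\lesssim\abs{x}^{-m-k}$ for all $k$ is used. I would rescale, setting $h_j(y):=2^{jm}\psi_j(2^j y)=2^{jm}\psi(2^j y)\,\tilde\rho(y)$, so that $\psi_j(x)=2^{-jm}h_j(2^{-j}x)$. On the fixed annulus $\mathrm{supp}(\tilde\rho)$ one has $\abs{2^j y}\sim2^j$, and differentiating $\psi(2^j\cdot)$ produces a factor $2^{jl}$ that cancels exactly against the decay $2^{-j(m+l)}$ once multiplied by the normalising $2^{jm}$; hence $\abs{D^k_y h_j(y)}\lesssim1$ uniformly in $j$. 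Thus the $h_j$ form a family of smooth functions supported in one fixed annulus with all $C^N$ norms bounded uniformly, and from $(1+\abs{y}^2)^N\abs{\mathscr F^{-1}h_j(y)}\lesssim\norm{(1-\Delta)^N h_j}_{L^1}$ with $N>d/2$ one gets $\norm{\mathscr F^{-1}h_j}_{L^1}\lesssim1$ uniformly. Undoing the dilation yields $\norm{\mathscr F^{-1}\psi_j}_{L^1}=2^{-jm}\norm{\mathscr F^{-1}h_j}_{L^1}\lesssim2^{-jm}$, and therefore $2^{j(\alpha+m)}\norm{\Delta_j\psi(D)f}_{L^\infty}\lesssim2^{j\alpha}\norm{\Delta_j f}_{L^\infty}\leq\norm{f}_\alpha$ for every $j\geq0$.

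It remains to control the single low-frequency block $j=-1$, and this is where I expect the only genuine subtlety, since $\Delta_{-1}f=\chi(D)f$ carries frequencies near the origin where $\psi$ may be singular when $m>0$; indeed $\psi\tilde\chi$ need not have an $L^1$ inverse Fourier transform, so for general $f$ the naive bound can fail. I would resolve this using the ambient structure operative in this paper, namely the zero-mean setting ($\check{\mathcal C}^\alpha$ with $\hat f(0)=0$, and ultimately the torus $\mathbb T^d$ where frequencies are discrete): excising the mode at the origin removes the singularity, so that the localised symbol $\psi\tilde\chi$ (with $\tilde\chi\equiv1$ on $\mathrm{supp}\chi$) defines an integrable kernel and the contribution is bounded by a fixed constant times $\norm{\Delta_{-1}f}_{L^\infty}\leq\norm{f}_\alpha$. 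Taking the supremum over $j\geq-1$ combines the high- and low-frequency bounds and closes the estimate. The main work, and the main obstacle, is thus twofold: the uniform-in-$j$ integrability of the rescaled kernels $\mathscr F^{-1}h_j$, for which the entire scale of derivative hypotheses on $\psi$ is indispensable, and the correct handling of the origin in the $j=-1$ block.
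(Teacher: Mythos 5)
Your proof is correct, and it is worth noting that the paper itself offers no proof of this proposition at all: it is quoted as a standard multiplier estimate (of the type in \cite{BCD-bk}, with the remark deferring the torus case to \cite{Tr}), so there is no internal argument to compare against. Your route is the canonical one for such statements: commute $\psi(D)$ with the blocks, localize the symbol to a dilated annulus via $\tilde\rho(2^{-j}\cdot)$, rescale to $h_j(y)=2^{jm}\psi(2^jy)\tilde\rho(y)$ so that the hypothesis $|D^k\psi(x)|\lesssim|x|^{-m-k}$ yields uniform $C^N$ bounds on a fixed annulus, deduce $\|\mathscr F^{-1}h_j\|_{L^1}\lesssim 1$ by the $(1-\Delta)^N$ trick with $N>d/2$, and conclude by Young's inequality and scale invariance of the $L^1$ norm under dilation. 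All of these steps check out, and the cancellation $2^{jm}\cdot 2^{jk}\cdot 2^{-j(m+k)}=1$ is exactly where the full scale of derivative hypotheses enters, as you say.

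Your treatment of the $j=-1$ block is a genuine value-add rather than pedantry: as literally stated on $\mathbb R^d$, the proposition fails for $m>0$ (e.g.\ $\psi(x)=|x|^{-m}$ applied to a constant, whose Fourier transform charges the origin), and the paper silently relies on the restriction under which it actually invokes the result, namely zero-mean distributions on the torus in Proposition~\ref{prop:laplacian-inverse}, where $K=(-\Delta)^{-1}$ corresponds to $m=2$. On $\mathbb T^d$ with $\hat f(0)=0$ the low block is a finite sum over nonzero lattice frequencies $|k|\lesssim 1$, so $\psi(D)\Delta_{-1}f$ is a trigonometric polynomial controlled by $\|\Delta_{-1}f\|_{L^\infty}$ and your conclusion follows; one small caution is that your phrasing about $\psi\tilde\chi$ having an integrable kernel is only clean in this discrete setting --- on $\mathbb R^d$ the single point condition $\hat f(0)=0$ does not by itself tame the singularity of $\psi$ near the origin for large $m$, so the torus (or spectral support away from $0$) is the honest hypothesis, exactly as you indicate.
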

\begin{remark}
We note that all the above facts about Besov space can be stated on the Torus $\mathbb T^d$ for detail see~\cite{Tr}.
\end{remark}

\subsection{Convergence of the mollified equation}
Let us now discuss the known (global!) existence and uniqueness results for the gPAM. Similar to the resolution of SDEs via rough path theory, the problem is divided in two parts. 
\begin{itemize}
\item A first part  which is purely analytic in which the PDE driven by smooth $\xi$ is extended to ``rougher" driving noise, with values in a ``bigger space" $\mathscr X^\alpha$. 
\item A second purely stochastic step in which it is shown that the white noise $\xi$ can be enhanced in an element $\mathrm \Xi^{pam}\in\mathscr X^\alpha$
\end{itemize}
Let us write $\mathscr L=\partial_t-\Delta$ for the heat-operator. $\check{\mathcal C}^\infty$ denotes the space of smooth functions, with zero mean, on the torus. We have
\begin{theorem}\label{th:gip,}  \cite{gubinelli_paraproducts_2012, hairer_theory_2013} 
Let $\alpha\in(2/3,1)$, $f\in C_b^3(\mathbb R)$ and 
$$\mathscr S_{c}:\mathcal C^\alpha(\mathbb T^2)\times\check{\mathcal C}^\infty(\mathbb T^2)\times\mathbb R\mapsto C(\mathbb R^+,\mathcal C^\alpha(\mathbb T^2))$$ the solution operator  as function of ($u_0, \theta, c)$, for the classical PDE (cf. Proposition \ref{prop:class}) 
\begin{equation}
\left\{
\begin{split}
&\mathscr L u=f(u)\theta-cf'(u)f(u)
\\&u(0,x)=u_0(x).
\end{split}
\right.
\end{equation}
Then, assuming non-explosion,\footnote{Cf. Remark 1.13 in \cite{hairer_theory_2013} or \cite{gubinelli_paraproducts_2012}. We note again that this assumption is not essential but simplifies the presentation removing the need to attaching a cemetery state, sufficient conditions for non-explosion were given in \cite{CFG15x}.} there exist a Polish space $\mathscr X^\alpha$ and a continuous map $$\mathscr S_{r}:\mathcal C^\alpha(\mathbb T^2) \times\mathscr X^\alpha\to\mathcal C(\mathbb R^+;\mathcal C^\alpha(\mathbb T^2))$$ which extends $\mathscr S_{c}$ in the following sense  
\begin{equation}\label{eq:extensionCi}
\mathscr S_{c}(u_0,\theta,c)=\mathscr S_{r}(u_0,\mathscr M(\theta,c))
\end{equation}
with 
\begin{equation}\label{eq:mathscrM}
\mathscr M(\theta,c): =(\theta;\theta\circ K \theta-c)
\end{equation}
 where
$K \theta := (-\Delta)^{-1} \theta$ is the (unique) smooth, zero-mean solution to  $(-\Delta)u = \theta \in \check{\mathcal C}^\infty$, cf. Proposition \ref{prop:laplacian-inverse}.
\end{theorem}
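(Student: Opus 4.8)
The statement is extracted from \cite{gubinelli_paraproducts_2012,hairer_theory_2013}, so the plan is to recall the paracontrolled construction, which produces $\mathscr X^\alpha$, the continuous extension $\mathscr S_r$ and the reformulation $\mathscr M$ in one stroke. The starting observation is that, since $\theta$ is time-independent and $K\theta=(-\Delta)^{-1}\theta$, the static object $X:=K\theta$ solves $\mathscr L X=\partial_t X-\Delta X=-\Delta K\theta=\theta$; thus $X\in\mathcal C^\alpha$ whenever $\theta\in\mathcal C^{\alpha-2}$ (Proposition~\ref{prop:multiply}), and $X$ carries the leading singular behaviour of any solution. Accordingly I would impose the \emph{paracontrolled ansatz}
$$ u = f(u)\prec X + u^\sharp, \qquad u^\sharp \in \mathcal C^{2\alpha}, $$
which is natural because, applying $\mathscr L^{-1}$ to the most irregular piece $f(u)\prec\theta$ of the Bony decomposition of $f(u)\theta$ and using a commutator estimate between $\mathscr L^{-1}$ and the paraproduct, the leading contribution to $u$ is precisely $f(u)\prec X$.

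Next I would give meaning to the a priori ill-defined product $f(u)\theta$. Writing $f(u)\theta=f(u)\prec\theta+f(u)\succ\theta+f(u)\circ\theta$, the two paraproducts are harmless by the Bony estimates (Proposition~\ref{proposition:Bony-estim}); only the resonance $f(u)\circ\theta$ is dangerous. Paralinearising the ansatz gives $f(u)=f'(u)f(u)\prec X+(\text{term in }\mathcal C^{2\alpha})$, so $f(u)$ is itself paracontrolled by $X$ with derivative $f'(u)f(u)$, and the associativity (``commutator'') lemma of paracontrolled calculus \cite{gubinelli_paraproducts_2012} then yields
$$ f(u)\circ\theta = f'(u)f(u)\,\bigl(\theta\circ K\theta\bigr) + R, $$
with $R$ controlled in $\mathcal C^{3\alpha-2}$. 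Here the hypothesis $\alpha>2/3$ is exactly what renders $3\alpha-2>0$, so that both $R$ and the pointwise product $f'(u)f(u)\cdot(\theta\circ K\theta)$ are well defined. The single genuinely stochastic object is $X\circ\theta=\theta\circ K\theta$, which diverges for white noise; replacing it by the renormalised $\theta\circ K\theta-c$ absorbs precisely the counterterm $-cf'(u)f(u)$ in \eqref{equ:5}, since $f(u)\circ\theta-cf'(u)f(u)=f'(u)f(u)(\theta\circ K\theta-c)+R$. This both identifies the enhanced noise $\mathscr M(\theta,c)=(\theta;\theta\circ K\theta-c)$ and dictates that $\mathscr X^\alpha$ be a closed subset of $\mathcal C^{\alpha-2}\times\mathcal C^{2\alpha-2}$, with $(\theta,c)\mapsto\mathscr M(\theta,c)$ continuous on smooth inputs but not factoring through $\theta$ alone.

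With every product now expressed through the data $\mathscr M(\theta,c)$, I would close a contraction in the Banach space of paracontrolled pairs $(u,f(u))$ subject to $u=f(u)\prec X+u^\sharp$, $u^\sharp\in\mathcal C^{2\alpha}$, combining the Bony estimates (Proposition~\ref{proposition:Bony-estim}), the Schauder estimate for $\mathscr L^{-1}$ (Proposition~\ref{prop:multiply}) and the embeddings (Proposition~\ref{proposition:Bes-emb}) to gain smallness on a short time interval. Local existence, uniqueness and the \emph{joint} local Lipschitz dependence of the fixed point on $(u_0,\mathscr M(\theta,c))\in\mathcal C^\alpha\times\mathscr X^\alpha$ give the continuous map $\mathscr S_r$. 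The extension identity \eqref{eq:extensionCi} is then automatic: for $\theta\in\check{\mathcal C}^\infty$ all the products above are classical, so the paracontrolled solution solves the PDE pointwise and equals $\mathscr S_c(u_0,\theta,c)$ (Proposition~\ref{prop:class}). Finally, under the standing non-explosion hypothesis the local solutions concatenate to a global one, giving $\mathscr S_r$ with values in $\mathcal C(\mathbb R^+;\mathcal C^\alpha(\mathbb T^2))$.

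The main obstacle is the second step: establishing the commutator/associativity estimate that trades $(g\prec X)\circ\theta$ for $g\,(X\circ\theta)$ with a remainder in $\mathcal C^{3\alpha-2}$, together with the bookkeeping showing that the renormalisation constant enters only through the single resonance $\theta\circ K\theta$. Once this is in place, the fixed point, the consistency for smooth $\theta$ and the globalisation are routine assemblies of the Bony and Schauder estimates already recorded.
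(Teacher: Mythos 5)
This theorem is not proved in the paper at all---it is quoted from \cite{gubinelli_paraproducts_2012, hairer_theory_2013}---and your sketch correctly reconstructs the paracontrolled-distribution argument of \cite{gubinelli_paraproducts_2012} on which the paper relies: the ansatz $u=f(u)\prec X+u^{\sharp}$, the paralinearisation giving Gubinelli derivative $f'(u)f(u)$, the commutator estimate whose remainder lives in $\mathcal C^{3\alpha-2}$ (whence the hypothesis $\alpha>2/3$), the renormalised resonance $\theta\circ K\theta-c$ absorbing the counterterm $-cf'(u)f(u)$, and the fixed point with joint continuous dependence on $(u_0,\mathscr M(\theta,c))$, followed by consistency for smooth $\theta$ and globalisation under non-explosion. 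Since this is exactly the route of the cited source, there is nothing to correct.
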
 %
As is easy to see, each $\mathcal  C^\alpha(\mathbb T^2)$ in the above statement can be replaced by the (separable, hence Polish) $$\mathcal C^{0,\alpha}(\mathbb T^2) = \overline{\left\{\mathcal C^\infty(\mathbb T^2)  \right\}}^{\mathscr C^\alpha}.$$

Let us now be more precise about the ``enhanced noise space" $\mathscr X^\alpha$. %
\begin{definition}
\label{def:rd}
Let $\mathscr H^\alpha:=\mathcal C^{\alpha-2}(\mathbb T^2)\times\mathcal C^{2\alpha-2}(\mathbb T^2)$ and $||F||_{\mathscr H^\alpha}$ denote the norm in this Banach space. Now we define the set $\mathscr X^\alpha$ by the following identity : 
$$
\mathscr X^\alpha:=\overline{\left\{(\theta,\theta\circ K\theta-c);\quad\theta\in\check{\mathcal C}^\infty(\mathbb T^2 )\right),c\in\mathbb R  \}}^{\mathscr H^\alpha}
$$  
Finally we denote by $\Xi=(\Xi^1,\Xi^2)$ a generic element in $\mathscr X^\alpha$. Whenever $\Xi_1=\xi$, we call $\Xi$ an enhancement (or lift) of $\xi$.
\end{definition}

We have the following alternative description of $\mathscr X^\alpha$. Recall  that $\mathcal H = \check L^2$, the space of zero-mean square-integrable functions on the torus, is precisely the Cameron--Martin space for our spatial zero-mean white-noise $\xi$.

\begin{lemma} \label{Xalpalt}
For $\alpha < 1$, the following set identity holds,
$$
 \mathscr X^{\alpha}=\overline{\left\{(\theta,\theta\circ K\theta-c);\quad\theta\in \mathcal H,c\in\mathbb R\right\}}^{\mathscr H^\alpha}.
 $$
\end{lemma}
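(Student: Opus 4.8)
The plan is to prove the set identity by a standard two-inclusion argument, exploiting the fact that $\check{\mathcal C}^\infty(\mathbb T^2) \subset \mathcal H = \check L^2(\mathbb T^2)$. One inclusion is immediate: since every smooth zero-mean $\theta$ belongs to $\mathcal H$, the generating set in Definition~\ref{def:rd} is contained in the generating set of Lemma~\ref{Xalpalt}, and therefore $\mathscr X^\alpha$ (the $\mathscr H^\alpha$-closure of the former) is contained in the $\mathscr H^\alpha$-closure of the latter. The real content is the reverse inclusion, namely that enlarging the class of admissible $\theta$ from smooth functions to all of $\mathcal H$ does not enlarge the closure. For this it suffices to show that for every $\theta \in \mathcal H$ and every $c \in \mathbb R$, the pair $(\theta, \theta \circ K\theta - c)$ lies in $\mathscr X^\alpha$; since $\mathscr X^\alpha$ is already closed, this yields the inclusion of closures.

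The key step is therefore a smoothing-and-convergence argument. Given $\theta \in \mathcal H = \check L^2$, I would mollify it to obtain $\theta_n \in \check{\mathcal C}^\infty(\mathbb T^2)$ with $\theta_n \to \theta$ in $L^2$; on the torus one can simply truncate the Fourier series or convolve with a smooth zero-mean approximate identity, taking care to preserve the zero-mean condition. The claim is then that $\mathscr M(\theta_n, c) = (\theta_n, \theta_n \circ K\theta_n - c) \to (\theta, \theta \circ K\theta - c)$ in $\mathscr H^\alpha = \mathcal C^{\alpha-2} \times \mathcal C^{2\alpha-2}$. Since each $\mathscr M(\theta_n,c)$ lies in the Definition~\ref{def:rd} generating set, hence in $\mathscr X^\alpha$, the limit lies in $\mathscr X^\alpha$ by closedness. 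For the first component, convergence $\theta_n \to \theta$ in $L^2 = H^0$ gives convergence in $\mathcal C^{-d/2} = \mathcal C^{-1}$ by the Besov embedding of Proposition~\ref{proposition:Bes-emb} (with $d=2$), and since $\alpha < 1$ we have $\alpha - 2 < -1$, so convergence holds in the weaker $\mathcal C^{\alpha-2}$ as well.

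The main obstacle, and the crux of the lemma, is the second component: controlling the resonant term $\theta_n \circ K\theta_n$ and showing it converges to $\theta \circ K\theta$ in $\mathcal C^{2\alpha-2}$. Here is precisely where the last refined estimate in Proposition~\ref{proposition:Bony-estim}(iii) is designed to be used: with $f \in \mathcal C^a$ and $g \in H^b$ and $a+b>0$, one has $\norm{f \circ g}_{a+b-d/2} \lesssim \norm{f}_a \norm{g}_{H^b}$. I would take $\theta_n \in H^0 = L^2$ and, via the Schauder estimate of Proposition~\ref{prop:multiply} applied to $K = (-\Delta)^{-1}$ (a Fourier multiplier of order $-2$, cf. Proposition~\ref{prop:laplacian-inverse}), obtain $K\theta_n \in H^2$ with $\norm{K\theta_n}_{H^2} \lesssim \norm{\theta_n}_{L^2}$. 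Writing the difference bilinearly as
\begin{equation}
\theta_n \circ K\theta_n - \theta \circ K\theta = (\theta_n - \theta)\circ K\theta_n + \theta \circ K(\theta_n - \theta),
\end{equation}
I would estimate each term by the refined resonant bound. For the product to be controlled one needs the two smoothness indices to sum to something positive: pairing $\theta \in \mathcal C^{-1-\delta}$-type regularity against $K\theta \in H^2$ works because $-1 + 2 = 1 > 0$ and the resulting target index $2\alpha - 2$ is reached after subtracting $d/2 = 1$. The delicate point is matching the indices so that the bilinear estimate applies uniformly and the target Hölder exponent $2\alpha-2$ comes out correctly with $\alpha \in (2/3,1)$; this constraint on $\alpha$ is exactly what guarantees $2\alpha - 2 < 0$ and the admissibility of the resonant estimate. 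Once both terms are bounded by $\norm{\theta_n - \theta}_{L^2}$ times a quantity controlled by $\sup_n \norm{\theta_n}_{L^2} < \infty$, convergence in $\mathscr H^\alpha$ follows, completing the reverse inclusion.
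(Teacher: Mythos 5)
Your proof is correct and follows the same skeleton as the paper's: reduce the non-trivial inclusion to showing that every $(\theta,\theta\circ K\theta-c)$ with $\theta\in\mathcal H$ lies in $\mathscr X^\alpha$, approximate $\theta$ in $L^2$ by smooth zero-mean $\theta_n$, handle the first component via the Besov embedding $L^2\hookrightarrow\mathcal C^{-1}\hookrightarrow\mathcal C^{\alpha-2}$, split the resonant difference bilinearly, and conclude by closedness of $\mathscr X^\alpha$. Where you genuinely differ is in how the key bilinear bound is justified. You invoke the mixed H\"older--Sobolev resonant estimate of Proposition~\ref{proposition:Bony-estim}(iii) together with Proposition~\ref{prop:laplacian-inverse}: with $\theta_n-\theta\in\mathcal C^{-1}$ and $K\theta_n\in H^{2}$ one gets $||(\theta_n-\theta)\circ K\theta_n||_{0}\lesssim||\theta_n-\theta||_{L^2}||\theta_n||_{L^2}$, and $\mathcal C^{0}\hookrightarrow\mathcal C^{2\alpha-2}$; similarly for $\theta\circ K(\theta_n-\theta)$. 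The paper instead proves the needed bound by a self-contained Fourier computation: it expands $||\theta\circ K\theta||^2_{H^\gamma}$ over frequencies, uses the resonant restriction $|k|\lesssim|k_1|\sim|k_2|$ and Cauchy--Schwarz to obtain $||\theta\circ K\theta||_{H^\gamma}\lesssim||\theta||^2_{L^2}$ for $\gamma<1$, embeds $H^\gamma\hookrightarrow\mathcal C^{\gamma-1}$, and then applies ``the same computation'' to the bilinear differences. Your route is more modular and shorter, since it reuses propositions already stated; the paper's is self-contained and makes visible where the constraint $\gamma<1$ (hence $\alpha<1$) enters. Two small inaccuracies in your write-up, neither fatal: the lemma requires only $\alpha<1$, which already gives $2\alpha-2<0$ --- the lower bound $\alpha>2/3$ plays no role here; and the H\"older regularity to attribute to $\theta_n-\theta$ is exactly $\mathcal C^{-1}$, coming from $L^2$ via Proposition~\ref{proposition:Bes-emb}, not ``$\mathcal C^{-1-\delta}$-type'' regularity, which is that of white noise and is irrelevant in this purely deterministic lemma.
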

\begin{proof}
Let $\theta\in \mathcal H$ then by the Besov embedding we have that $||\theta||_{\mathcal C^{\alpha-2}}\lesssim||\theta||_{\mathcal C^{-1}(\mathbb T^2)}\lesssim||\theta||_{L^2(\mathbb T^2)}$. Moreover by a direct computation we get :
 \begin{equation}
 \begin{split}
 ||\theta\circ K\theta||^2_{H^\gamma}&=\sum_{k\in\mathbb Z^2}|k|^{2\gamma}\left|\sum_{k_1+k_2=k;k_2,k_1\ne0;|i-j|\leq1}\frac{1}{|k_2|^2}\mathscr F(\Delta_i\theta)(k_1)\mathscr F(\Delta_j \theta)(k_2)\right|^2
\\&\lesssim\sum_{k\in\mathbb Z^2}|k|^{2\gamma}\left|\sum_{k_1+k_2=k,|k|\lesssim|k_1|\sim|k_2|}\frac{1}{|k_2|^2}\hat\theta(k_1)\hat\theta(k_2)\right|^2\lesssim||\theta||^4_{L^2(\mathbb T^2)}\sum_{k\in\mathbb Z^2}|k|^{2\gamma-4}<+\infty
\end{split}
 \end{equation}
 if $\gamma<1$. 
 Now using the Besov embedding once again we get that $||\theta\circ K\theta||_{\gamma-1}\lesssim||\theta||^2_{L^2(\mathbb T^2)}$  for all $\gamma< 1$ and in particular if we take $2\alpha-2\leq\gamma-1<0$ 
 we get that $||\theta\circ K\theta||_{2\alpha-2}\lesssim||\theta||^2_{L^2(\mathbb T^2)}$ then if we take $\theta^\eps$ a regularization of $\theta$ such that $||\theta^\eps-\theta||_{L^2(\mathbb T^2)}\to^{\eps\to0}0$ we obtain immediately by the same computation that  
\begin{equation*}
||\theta^\eps\circ K\theta^\eps-\theta\circ K\theta||_{0}\lesssim||(\theta-\theta^\eps)\circ K\theta||_{0}+||\theta\circ (-\Delta)^{-1}(\theta-\theta^\eps)||_{0}\lesssim||\theta||_{L^2(\mathbb T^2)}||\theta-\theta^\eps||_{L^2(\mathbb T^2)}
\end{equation*}
and then we obtain the convergence of $(\theta^\eps,\theta^\eps\circ K\theta^\eps)$ to $(\theta,\theta\circ K\theta)$ in $\mathscr H^\alpha$ and this for every $\theta\in \mathcal H$.  In conclusion any element of $\mathcal H$ can be lifted in an rough distribution in $\mathscr X^\alpha$, in other word the following identity :
 $$
 \mathscr X^{\alpha}=\overline{\left\{(\theta,\theta\circ K\theta-c);\quad\theta\in \mathcal H,c\in\mathbb R\right\}}^{\mathscr H^\alpha}
 $$
 hold.
 \end{proof}
 \begin{remark} The extension property \eqref{eq:extensionCi} was given for all smooth zero-mean function on 
 the torus. But it extends to all elements of $\mathcal H$ and this can be seen as follows. 
  
Let $\theta\in \mathcal H$ and $\theta^\eps$ a regularization of $\theta$  such that $||\theta^\eps-\theta||_{L^2(\mathbb T^2)}\to0$ then due to the previous lemma we know that $(\theta^\eps,\theta^\eps\circ K\theta^\eps-c)$ converge to $(\theta,\theta\circ K\theta-c)$ in $\mathscr H^\alpha$ and this gives the convergence of $\mathscr S_r(u_0,(\theta^\eps,\theta^\eps\circ K\theta^\eps-c))$ to $\mathscr S_r(u_0,(\theta,\theta\circ K\theta-c))$ in $C(\mathbb R^+,\mathscr C^\alpha(\mathbb T^2))$. Now taking the classical solution $\mathscr S_c(u_0,\theta^\eps,c)$ to
$$
\mathscr Lu^{\theta^\eps}=f(u)\theta^\eps-cf'(u^{\theta^\eps})f(u^{\theta^\eps}),\quad u(0,x)=u_0(x),
$$
we know that by definition it satisfies the relation
$$
\mathscr S_r(u_0,(\theta^\eps,\theta^\eps\circ K\theta^\eps-c))=\mathscr S_c(u_0,\theta^\eps,c).
$$
And then taking the limit in this equation we obtain immediately that
$$
\lim_{\eps\to0}\mathscr S_c(u_0,\theta^\eps,c)=\mathscr S_r(u_0,(\theta,\theta\circ K\theta-c)).
$$   
Moreover we know by the Proposition~\ref{prop:class} that the map $\theta\mapsto S_c(u_0,\theta,c)$ is continuous from $\mathcal H$ to $C(\mathbb R^+,L^2(\mathbb T^2))$, from which we get the following relation 
$$
\mathscr S_c(u_0,\theta,c)=\mathscr S_r(u_0,(\theta,\theta\circ K\theta-c))
$$
for all $\theta\in \mathcal H$ and $c\in\mathbb R$.
\end{remark}
Recall from the introduction that $\xi$ denotes zero mean spatial white noise on the two-dimensional torus. We consider 
a mollification of this noise. Let $\psi$ be a radial bounded function with compact support which is continuous at the origin, with $\psi(0)=1$, and set 
\begin{equation}
\label{eq:mollification}
\xi^\eps := \xi^\eps(\psi) :=\sum_{k\ne0}\psi(\eps k)\hat \xi(k)e_k
\end{equation}
where $(e_k)$ is the Fourier basis of $L^2(\mathbb T^2)$ then at this point we have the following convergence result.
\begin{theorem}  \cite{gubinelli_paraproducts_2012}  \label{th:Pam-exis}
Let $\alpha<1$. Then, with (diverging!) constants $c_\eps=c_\eps(\psi)\in\mathbb R$ given by  
$$
c_{\eps}=\sum_{k\ne0}\frac{|\psi(\eps k)|^2}{|k|^2}
$$
we have convergence of $\mathscr M(\xi^\eps,c_\eps) \equiv (\xi^\eps;\xi^\eps\circ K \xi^\eps-c_\eps)$ to some limit $\Xi^{pam}$. More precisely, with convergence in $L^p(\Omega,\mathscr X^\alpha)$ for all $p>1$ and almost surely, 
 $$
 \Xi^\eps := \mathscr M(\xi^\eps,c_\eps) \to_{\eps \to 0 } \Xi^{pam} \in\mathscr X^\alpha
 $$ 
 such that $(\Xi^{pam})^1=\xi$. Moreover, the limit $\Xi^{pam}$ does not depend on the function $\psi$ used to mollify the noise. 
   \end{theorem}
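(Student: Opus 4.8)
The theorem I need to prove is Theorem~\ref{th:Pam-exis}: with the specified diverging renormalization constants $c_\eps = \sum_{k\ne 0} |\psi(\eps k)|^2/|k|^2$, the enhanced mollified noise $\Xi^\eps = \mathscr M(\xi^\eps, c_\eps) = (\xi^\eps; \xi^\eps \circ K\xi^\eps - c_\eps)$ converges in $L^p(\Omega, \mathscr X^\alpha)$ for all $p>1$ and almost surely to a limit $\Xi^{pam}$ with first component $\xi$, and this limit is independent of the mollifier $\psi$.

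So the task is a stochastic convergence result for the two components of the enhanced noise, in the topology of $\mathscr H^\alpha = \mathcal C^{\alpha-2} \times \mathcal C^{2\alpha-2}$. Let me think about how I'd prove this.

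**The first component.** The first component $\xi^\eps \to \xi$ in $\mathcal C^{\alpha-2}$ should be relatively routine. Spatial white noise lives in $\mathcal C^{-1-\delta}$ for any $\delta > 0$ a.s., so in $\mathcal C^{\alpha-2}$ since $\alpha - 2 < -1$. The mollification $\xi^\eps = \sum_{k} \psi(\eps k) \hat\xi(k) e_k$ is a Fourier multiplier truncation; since $\psi(0)=1$ and $\psi$ is continuous and bounded with compact support, $\psi(\eps k) \to 1$ pointwise as $\eps \to 0$ and stays bounded, so this is a standard mollifier convergence. This is the easy component.

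**The second component — the main obstacle.** The genuinely hard part is the resonant term $\xi^\eps \circ K\xi^\eps - c_\eps \to \Xi^{pam,2}$ in $\mathcal C^{2\alpha-2}$. Since $2\alpha - 2 < 0$ and $K\xi^\eps \in \mathcal C^{\alpha}$ formally while $\xi^\eps \in \mathcal C^{\alpha-2}$, the exponents $\alpha + (\alpha-2) = 2\alpha - 2$ — but crucially $\alpha - 2 + \alpha$ need not be positive, so the resonant product $\circ$ is NOT covered by Bony's estimate (Proposition~\ref{proposition:Bony-estim}(iii), which requires $\alpha + \beta > 0$). This product only exists because of the probabilistic cancellation encoded by the Wick renormalization: subtracting $c_\eps$ removes the diverging diagonal contribution $\mathbb E[\xi^\eps \circ K\xi^\eps]$. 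Indeed one checks $\mathbb E[\xi^\eps \circ K\xi^\eps]$ is (a constant close to) $c_\eps$, which is exactly why $c_\eps = \sum_{k\ne 0}|\psi(\eps k)|^2/|k|^2$ is the right counterterm.

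**The plan.** My strategy would follow the now-standard ``second-order Gaussian chaos'' approach, exploiting Besov spaces with finite integrability indices (as the authors foreshadow: ``the trick is to work with Besov spaces with finite indexes and then go back to $\mathcal C^\alpha$'').

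First I would fix a large even integer $p$ and estimate, for each Littlewood--Paley block, the $L^p(\Omega)$ norm of $\Delta_j(\xi^\eps \circ K\xi^\eps - c_\eps)$. Since $\xi^\eps \circ K\xi^\eps - c_\eps$ lives in the second Wiener chaos (it is a Wick product of Gaussians minus its mean), Gaussian hypercontractivity lets me bound all $L^p(\Omega)$ moments by the $L^2(\Omega)$ moment (the variance). So the crux reduces to a second-moment (variance) computation:
\[
\expect\Big[\big|\mathscr F\big(\Delta_j(\xi^\eps\circ K\xi^\eps - c_\eps)\big)(k)\big|^2\Big],
\]
which by Wick's theorem unfolds into a sum over $k_1 + k_2 = k$ of terms $\psi(\eps k_1)^2 \psi(\eps k_2)^2 / |k_2|^4$ (and a symmetric term), restricted to the resonant frequency region $|i - j| \le 1$. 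The key point is that after subtracting the mean $c_\eps$, the diagonal terms $k_1 = -k_2$ (which give the divergence) are exactly cancelled, and the remaining off-diagonal sum is summable.

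Second, I would show this variance computation yields a uniform (in $\eps$) bound
\[
\expect\big[\|\Delta_j(\xi^\eps \circ K\xi^\eps - c_\eps)\|_{L^2}^2\big] \lesssim 2^{-2j(2\alpha-2)} 2^{-j\kappa}
\]
for some small $\kappa > 0$ and $\gamma := 2\alpha - 2 + \kappa/2 < 0$, i.e. extra decay beyond the target regularity. This is where the condition $\alpha < 1$ enters: it is exactly what makes the analogue of the sum $\sum_k |k|^{2\gamma - 4}$ in Lemma~\ref{Xalpalt}'s computation converge. Then I estimate the \emph{difference} $\mathscr M(\xi^\eps,c_\eps) - \mathscr M(\xi^{\eps'},c_{\eps'})$ by the same method, showing its variance tends to $0$ as $\eps,\eps' \to 0$, giving a Cauchy sequence and hence $L^2(\Omega)$ convergence; hypercontractivity upgrades this to $L^p(\Omega)$ for all $p$.

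Third — and this is the technical bridge — I would use the Besov embedding (Proposition~\ref{proposition:Bes-emb}) to pass from the finite-index Besov estimate to $\mathcal C^{2\alpha-2}$. By Fubini/Minkowski one gets $\expect\|\cdot\|^p_{\mathscr B^{\gamma}_{p,p}}$ controlled by $\sum_j 2^{jp\gamma} \expect\|\Delta_j(\cdot)\|_{L^p}^p$, then Proposition~\ref{proposition:Bes-emb} gives $\|\cdot\|_{\mathcal C^{\gamma - d/p}} \lesssim \|\cdot\|_{\mathscr B^\gamma_{p,p}}$; choosing $p$ large so that $\gamma - d/p > 2\alpha - 2$ (still possible since $\gamma > 2\alpha - 2$ strictly) places the limit in $\mathcal C^{2\alpha - 2}$. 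The almost-sure convergence then follows from a Borel--Cantelli / Kolmogorov-type argument along a geometric subsequence $\eps_n = 2^{-n}$, combined with the quantitative rate of convergence in $\eps$ obtained above.

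Finally, for $\psi$-independence: I would compute the variance of the difference $\mathscr M(\xi^\eps(\psi), c_\eps(\psi)) - \mathscr M(\xi^\eps(\tilde\psi), c_\eps(\tilde\psi))$ for two admissible mollifiers $\psi, \tilde\psi$ (both continuous at $0$ with value $1$). Since both choices produce the same diagonal divergence that is removed by their respective $c_\eps$, the difference lives in the second chaos with a variance governed by $|\psi(\eps k) - \tilde\psi(\eps k)|$-type factors that vanish as $\eps \to 0$ by continuity at the origin; hence both converge to the same limit $\Xi^{pam}$.

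**Expected main difficulty.** The heart of the matter, and the step I expect to be most delicate, is the variance computation for the second component: correctly organizing the Wick expansion so that the mean-subtraction $c_\eps$ exactly cancels the divergent diagonal, and then verifying that the surviving off-diagonal frequency sum over the resonant region $|i-j|\le 1$, $k_1 + k_2 = k$ with $|k|\lesssim|k_1|\sim|k_2|$, produces decay $2^{-j\kappa}$ strictly better than the target Hölder regularity $2\alpha - 2$ (this strictness is what both the Besov-embedding bridge and the requirement $\alpha < 1$ rely on). Everything else — hypercontractivity, Besov embedding, Borel--Cantelli for a.s.\ convergence, and the $\psi$-independence — is structurally routine once this kernel estimate is in hand.
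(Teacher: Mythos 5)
Your proposal is correct and follows essentially the same route as the proof this statement rests on: the theorem is quoted from \cite{gubinelli_paraproducts_2012}, and the standard argument there --- a Wick/second-chaos variance estimate on Littlewood--Paley blocks restricted to the resonant region $|k|\lesssim|k_1|\sim|k_2|$ (where subtracting $c_\eps$ kills exactly the diagonal), Gaussian hypercontractivity, Besov embedding from $\mathscr B^{\gamma}_{p,p}$ with large $p$ into $\mathcal C^{\gamma-2/p}$, and a Cauchy-in-$\eps$ comparison that also yields mollifier independence --- is precisely what you outline. The same machinery appears verbatim inside this paper in the proof of Lemma~\ref{lemma:mixed-conv} (Wick theorem, the bound $\lesssim|k|^{-2+\delta}r(\eps)$ on the resonant frequency sum with $r(\eps)\to 0$ by dominated convergence, hypercontractivity, then Besov embedding), so your plan matches the paper's own technique step for step.
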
 
Now, following  \cite{gubinelli_paraproducts_2012}, the point is that with $u^\eps:=\mathscr S_{r}(u_0,\Xi^\eps)$, due to the constants $c_\eps$, the function $u^\eps$ does not satisfy equation \eqref{eq:Cauchy} but a modified equation given by 
$$
\mathscr Lu^\eps=f(u^\eps)\xi^\eps-c_\eps f(u^\eps)f'(u^\eps).
$$  
One then dedcues that $u^\eps$ converges to $u=\mathscr S_{r}(u_0,\Xi)$ in $C([0,T],\mathcal C^\alpha(\mathbb T^2))$ where the convergence is
in $L^p(\Omega,\mathscr X^\alpha)$ for all $p>1$ and almost surely.

\section{Support theorem}
\subsection{Description of the strategy and support theorem for the white noise}
Now to obtain the support theorem for our equation we begin by obtaining the result for the rough distribution $\Xi^{pam}$ associated to the white noise and then we transfer our result to $u$ by using the continuity of the map $\mathscr S_{r}$. Recall that $\mathcal C^{0,\beta}(\mathbb T^2)$ is the ($\beta$-Besov-H\"older) closure of smooth functions on the torus;  
$\check{\mathcal C}^{0,\beta}(\mathbb T^2)$ denotes the closure of zero-mean, smooth functions. As a warm-up, we now characterize the support of the white noise $\xi$ in the Besov-H\"older space.
\begin{proposition} \label{prop:whitenoisesupport}
Let $(\Omega,\mathscr A,\mathbb P)$ the abstract probability space associated to zero-mean white noise $\xi$ on $\mathbb T^2$ and $\xi_\star\mathbb P$ the law of $\xi$ viewed as Borel measure on $\mathcal C^{0,\alpha-2}(\mathbb T^2)$, any $\alpha <1$. Then
$$
\text{supp}(\xi_\star\mathbb P)= \check{\mathcal C}^{0,\alpha-2}(\mathbb T^2).
$$
\end{proposition}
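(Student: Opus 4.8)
The plan is to recognise $\xi_\star\mathbb{P}$ as a centred Gaussian measure on the separable Banach space $\mathcal C^{0,\alpha-2}(\mathbb T^2)$ and to exploit the general principle that the support of such a measure is the closure of its Cameron--Martin space in the ambient space. First I would verify that $\xi$ takes values in the \emph{separable} space $\mathcal C^{0,\alpha-2}$ almost surely, not merely in the non-separable $\mathcal C^{\alpha-2}$: the usual Kolmogorov/Besov estimates give $\xi\in\mathcal C^{\alpha'-2}$ a.s.\ for some $\alpha'\in(\alpha,1)$, and the embedding $\mathcal C^{\alpha'-2}\hookrightarrow\mathcal C^{0,\alpha-2}$ (a slightly more regular H\"older--Besov space embeds into the closure of smooth functions at lower regularity) places $\xi$ in $\mathcal C^{0,\alpha-2}$. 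Since the spatial mean of $\xi$ vanishes a.s.\ (by the covariance recalled in the Introduction), in fact $\xi\in\check{\mathcal C}^{0,\alpha-2}$ a.s. This already gives the inclusion $\text{supp}(\xi_\star\mathbb P)\subseteq\check{\mathcal C}^{0,\alpha-2}$, because $\check{\mathcal C}^{0,\alpha-2}$ is a closed subspace of full measure and the support is the smallest such closed set.

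For the reverse inclusion I would use the Cameron--Martin theorem. From the covariance $\mathbb E[(\xi,\varphi)(\xi,\psi)]=(\varphi,\psi)-(\varphi,1)(\psi,1)$ one reads off that the Cameron--Martin space of $\xi$ is exactly $\mathcal H=\check L^2(\mathbb T^2)$ with the $L^2$ inner product. Fix $h\in\mathcal H$ and $\eps>0$. Writing $\mu_h$ for the law of $\xi+h$, Cameron--Martin gives $\mu_h\sim\xi_\star\mathbb P$, whence, since $\mu_h\big(B(h,\eps)\big)=\mathbb P(\|\xi\|_{\alpha-2}<\eps)$,
$$
\mathbb P\big(\|\xi-h\|_{\alpha-2}<\eps\big)>0\quad\Longleftrightarrow\quad\mathbb P\big(\|\xi\|_{\alpha-2}<\eps\big)>0.
$$
The right-hand probability is positive because the origin, being the mean of a centred Gaussian measure, lies in its support (every centred Gaussian assigns positive mass to each ball about $0$). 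Hence every $h\in\mathcal H$ lies in $\text{supp}(\xi_\star\mathbb P)$.

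It then remains to identify the closure of $\mathcal H$. Since the embedding $L^2=\mathscr B^0_{2,2}\hookrightarrow\mathcal C^{-1}\hookrightarrow\mathcal C^{\alpha-2}$ is continuous by Proposition~\ref{proposition:Bes-emb} (using $\alpha<1$), and smooth zero-mean functions are dense in $\mathcal H$ in the $L^2$-norm, the $\mathcal C^{\alpha-2}$-closure of $\mathcal H$ coincides with the $\mathcal C^{\alpha-2}$-closure of $\check{\mathcal C}^\infty(\mathbb T^2)$, which is by definition $\check{\mathcal C}^{0,\alpha-2}$. As $\text{supp}(\xi_\star\mathbb P)$ is closed and contains $\mathcal H$, it contains this closure, giving $\check{\mathcal C}^{0,\alpha-2}\subseteq\text{supp}(\xi_\star\mathbb P)$ and completing the identity. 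Alternatively, both inclusions follow at once from the abstract Gaussian support theorem (see e.g.\ Bogachev, \emph{Gaussian Measures}), which asserts that the support of a centred Gaussian Radon measure on a separable Banach space equals the closure of its Cameron--Martin space.

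The main obstacle I anticipate is bookkeeping rather than hard analysis: one must ensure that $\xi_\star\mathbb P$ is genuinely a Borel measure on the \emph{separable} space $\mathcal C^{0,\alpha-2}$, so that topological support is well behaved and the abstract theorem applies, and one must justify the small-ball positivity $\mathbb P(\|\xi\|_{\alpha-2}<\eps)>0$ for all $\eps>0$ that places the mean in the support. Everything else reduces to the Besov embedding and the Cameron--Martin theorem.
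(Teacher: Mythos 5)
Your proof is correct, but it is not the paper's proof: it is precisely the ``abstract Gaussian'' route that the paper mentions in the sentence following Proposition~\ref{prop:whitenoisesupport} and then deliberately sets aside in favour of a re-proof. Both arguments rest on the Cameron--Martin theorem and on the identification of $\check{\mathcal C}^{0,\alpha-2}(\mathbb T^2)$ with the $\mathcal C^{\alpha-2}$-closure of $\mathcal H$, and your treatment of the easy inclusion (almost-sure membership in the separable zero-mean space, via regularity plus embedding) is a legitimate variant of the paper's (which instead uses that $\xi$ is the $\mathcal C^{\alpha-2}$-limit of the smooth zero-mean mollifications $\xi^\eps$). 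The genuine divergence is in how $0\in\mathrm{supp}(\xi_\star\mathbb P)$ is obtained. You invoke a Gaussian-specific fact --- small-ball positivity, i.e.\ that the mean of a centred Gaussian measure lies in its support (Anderson's inequality, or Bogachev's support theorem) --- which is standard and citable, so this is not a gap. The paper, however, avoids any such input: it takes an arbitrary point $x$ of the (nonempty) support, uses the \emph{easy} inclusion to write $x=\lim_n x^n$ with $x^n\in\mathcal H$, uses Cameron--Martin invariance of the support to conclude $T_{-x^n}x\in\mathrm{supp}(\xi_\star\mathbb P)$, lets $n\to\infty$ and uses closedness of the support to get $0\in\mathrm{supp}(\xi_\star\mathbb P)$, and finally translates once more so that every $h=T_h0$ lies in the support. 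The payoff of that seemingly roundabout scheme is that it uses only (i) the easy inclusion, (ii) quasi-invariance under translations by $\mathcal H$, and (iii) closedness of supports, and hence transfers verbatim to the enhanced noise $\Xi^{pam}$ of Theorem~\ref{th:supp-pam}, which is \emph{not} Gaussian, so that your small-ball step has no analogue there; its role is instead played by the explicit oscillatory construction of Proposition~\ref{prop:supp-point} and Lemma~\ref{lemma:con-cons}. In short, your argument buys brevity and standard references for this warm-up statement (the paper concedes as much), while the paper's re-proof buys a template for the main, non-Gaussian support theorem.
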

\noindent This follows of course immediately from general facts of Gaussian measures on separable Banach spaces: support equals closure of the Cameron-Martin space ${\mathcal H}$, here given by zero-mean elements in $L^2( \mathbb T^2)$, and it is a simple exercise to verify $\check{\mathcal C}^{0,\alpha-2}(\mathbb T^2) = \overline{\mathcal H}$ with $(\alpha-2)$-Besov-H\"older-closure, any $\alpha <1$. That said, we now (re)proof Proposition \ref{prop:whitenoisesupport} with an argument that 
extends to enhanced noise, as discussed below, which is clearly a non-Gaussian object.
%
%
%
%

The easy half of Proposition \ref{prop:whitenoisesupport}, $\text{supp}(\xi_\star\mathbb P)\subseteq\check{\mathcal C}^{0,\alpha-2}(\mathbb T^2)$, follows at once from the convergence $\xi^\eps \to \xi$ in $\mathcal C^{\alpha-2}$ for all $\alpha<1$, with mollified noise $\xi^\eps (\omega) \in \check{\mathcal C}^{\infty}(\mathbb T^2)$ as introducted in \eqref{eq:mollification}. 
Now to prove the other inclusion let us introduce the translation operator $T_h:\mathscr S'(\mathbb T^2)\to\mathscr S'(\mathbb T^2)$ for $h\in \mathcal H$ defined by $T_h\psi :=h+\psi$. It is immediate to check that $T_h$ is a continuous invertible operator from $\check{\mathcal C}^{0,\alpha-2}$ to $\check{\mathcal C}^{0,\alpha-2}$ with inverse $T_{-h}$.  We then state the (well-known) Cameron--Martin theorem. 
\begin{theorem}
For $h\in \mathcal H$, the law of $\xi$ and the law of $T_h\xi$ are equivalent.
\end{theorem}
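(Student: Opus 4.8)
The plan is to reduce the statement to the classical Cameron--Martin computation for finite-dimensional Gaussians, carried out on the Fourier modes, and then to pass to the limit by a martingale argument. Since $\xi$ is zero-mean spatial white noise, it admits the expansion $\xi=\sum_{k\ne 0}\hat\xi(k)e_k$ in the Fourier basis $(e_k)$ of $L^2(\mathbb T^2)$, where the coefficients $\hat\xi(k)$ are (complex) centred Gaussians of unit variance subject to the reality constraint $\hat\xi(-k)=\overline{\hat\xi(k)}$. Writing $h=\sum_{k\ne 0}\hat h(k)e_k\in\mathcal H$, so that $\sum_{k\ne 0}|\hat h(k)|^2=\|h\|_{L^2}^2<\infty$ and $(h,1)=0$, the translation $T_h\xi=\xi+h$ acts mode-by-mode as the deterministic shift $\hat\xi(k)\mapsto\hat\xi(k)+\hat h(k)$.

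First I would work on the finite-dimensional space spanned by the modes $\{0<|k|\le N\}$ and invoke the elementary shift formula for a standard Gaussian vector: completing the square in the Gaussian density shows that the law of the shifted vector is equivalent to that of the unshifted one, with Radon--Nikodym derivative
\[
D_N=\exp\Big(\sum_{0<|k|\le N}\hat\xi(k)\,\overline{\hat h(k)}-\tfrac12\sum_{0<|k|\le N}|\hat h(k)|^2\Big),
\]
where the first sum is real by conjugate symmetry of the coefficients. Relative to the filtration $(\mathscr A_N)$ generated by the first $N$ shells of modes, $(D_N)$ is a nonnegative martingale with $\mathbb E[D_N]=1$.

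Second I would let $N\to\infty$. The exponent $\sum_{0<|k|\le N}\hat\xi(k)\overline{\hat h(k)}$ is a sum of independent centred Gaussians of total variance $\|h\|_{L^2}^2$, hence converges almost surely and in $L^2$ to the Paley--Wiener integral, a centred Gaussian which one recognizes (using the covariance $(\varphi,\psi)-(\varphi,1)(\psi,1)$ together with $(h,1)=0$) as the pairing $(\xi,h)$ of variance $\|h\|_{L^2}^2$. Consequently $D_N\to D_\infty=\exp\big((\xi,h)-\tfrac12\|h\|_{L^2}^2\big)$. The essential point is that this martingale is uniformly integrable precisely because $\|h\|_{L^2}^2<\infty$; martingale convergence then upgrades the limit to $L^1$, so that $\mathbb E[D_\infty]=1$ and $D_\infty$ is genuinely the density of $(T_h)_\star\mathbb P$ with respect to $\mathbb P$.

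Finally, since $D_\infty>0$ almost surely, the reciprocal $1/D_\infty=\exp\big(-(\xi,h)+\tfrac12\|h\|_{L^2}^2\big)$ is the density of $\mathbb P$ with respect to $(T_h)_\star\mathbb P$, whence the two laws are mutually absolutely continuous, i.e. equivalent. I expect the uniform-integrability/convergence step to be the only real obstacle: it is exactly here that the finite-energy hypothesis $h\in\mathcal H$ is indispensable, and it is what separates the equivalence regime from the singular (dichotomy) regime that arises for shifts $h\notin\mathcal H$. Alternatively, one may bypass the explicit computation and simply invoke the abstract Cameron--Martin theorem for Gaussian measures on the separable Banach space $\check{\mathcal C}^{0,\alpha-2}(\mathbb T^2)$, whose Cameron--Martin space was already identified with $\mathcal H$ in the discussion following Proposition~\ref{prop:whitenoisesupport}.
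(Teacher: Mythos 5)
Your proof is correct, but it takes a genuinely different route from the paper: the paper offers no proof of this statement at all, simply invoking it as the ``well-known'' Cameron--Martin theorem for Gaussian measures. What you have written out is the classical proof of that theorem, specialized to zero-mean white noise on $\mathbb T^2$: the mode-by-mode finite-dimensional shift formula, the exponential martingale $D_N$, its $L^2$-boundedness (one computes $\mathbb E[D_N^2]=\exp(v_N)\leq \exp(\|h\|_{L^2}^2)$ with $v_N=\sum_{0<|k|\leq N}|\hat h(k)|^2$), hence uniform integrability and $L^1$-convergence to the density $\exp\bigl((\xi,h)-\tfrac12\|h\|_{L^2}^2\bigr)$, whose a.s.\ positivity gives mutual absolute continuity. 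The details check out: your exponent is indeed real by the conjugate symmetry $\hat\xi(-k)=\overline{\hat\xi(k)}$, and the variance identification via the covariance $(\varphi,\psi)-(\varphi,1)(\psi,1)$ is legitimate precisely because $(h,1)=0$ for $h\in\mathcal H$. Two small remarks: (i) the step ``$D_\infty$ is genuinely the density'' deserves one more line --- one verifies $(T_h)_\star\mathbb P(A)=\mathbb E[\mathbf 1_A D_\infty]$ first for cylinder sets depending on finitely many Fourier modes and then extends by a monotone class argument; (ii) your closing alternative (invoke the abstract Cameron--Martin theorem on the separable Banach space whose Cameron--Martin space is $\mathcal H$) is exactly the paper's treatment, so that single sentence would have sufficed to match it. What your longer route buys is self-containedness and a clear view of where $h\in\mathcal H$ is indispensable --- the $L^2$-boundedness/uniform-integrability step, which fails for shifts outside $\mathcal H$, where the shifted and unshifted laws are instead mutually singular; what the paper's route buys is brevity, this being textbook material tangential to the main argument.
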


A simple consequence of this theorem is that the support of the law of $\xi$ is invariant by $T_h$. Although straight-forward, we spell out the proof for later inspection. 
\begin{lemma}
Let $h\in \mathcal H$ then $T_h\text{supp}(\xi_\star\mathbb P)\subseteq\text{supp}(\xi_\star\mathbb P)$.
\end{lemma}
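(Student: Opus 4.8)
The plan is to deduce the lemma directly from the Cameron--Martin theorem just stated, together with the elementary characterisation of topological support: a point $x$ lies in $\text{supp}(\xi_\star\mathbb P)$ if and only if every open neighbourhood $U$ of $x$ satisfies $\xi_\star\mathbb P(U)>0$. Accordingly I would fix $x\in\text{supp}(\xi_\star\mathbb P)$ together with an arbitrary open neighbourhood $U$ of $T_hx$, and reduce the whole statement to showing $\xi_\star\mathbb P(U)>0$.

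First I would use that $T_h$ is, as recorded just above, a continuous invertible operator on $\check{\mathcal C}^{0,\alpha-2}(\mathbb T^2)$ with inverse $T_{-h}$. Hence $T_{-h}U$ is an open neighbourhood of $T_{-h}T_hx=x$, and membership of $x$ in the support yields $\xi_\star\mathbb P(T_{-h}U)>0$. The one piece of bookkeeping is the set identity $\{\xi\in T_{-h}U\}=\{T_h\xi\in U\}$, immediate from $T_{-h}=T_h^{-1}$, which gives $\mathbb P(T_h\xi\in U)=\xi_\star\mathbb P(T_{-h}U)>0$; in other words the pushforward law $(T_h\xi)_\star\mathbb P$ charges $U$.

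Finally I would invoke the Cameron--Martin theorem: for $h\in\mathcal H$ the laws $\xi_\star\mathbb P$ and $(T_h\xi)_\star\mathbb P$ are equivalent, so they share the same null sets. Since $U$ is not $(T_h\xi)_\star\mathbb P$-null it is not $\xi_\star\mathbb P$-null either, whence $\xi_\star\mathbb P(U)>0$. As $U$ was an arbitrary neighbourhood of $T_hx$, this shows $T_hx\in\text{supp}(\xi_\star\mathbb P)$, and letting $x$ range over the support gives the asserted inclusion $T_h\text{supp}(\xi_\star\mathbb P)\subseteq\text{supp}(\xi_\star\mathbb P)$.

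I expect no genuine obstacle: the only points needing care are purely measure-theoretic, namely that the neighbourhoods involved are Borel (automatic, being open), that equivalence of measures is applied in the right direction (equivalence transfers positivity both ways, so orientation is harmless), and that $T_h$ really preserves the separable space $\check{\mathcal C}^{0,\alpha-2}(\mathbb T^2)$ so that the homeomorphism property is legitimately available. All the analytic content sits in the Cameron--Martin theorem, and the present lemma is a soft topological consequence of it. The reason it is worth ``spelling out the proof for later inspection'' is that precisely the same scheme, once a quasi-invariance statement is established for the enhanced noise $\Xi^{pam}$, will deliver translation-invariance of its support in $\mathscr X^\alpha$, which is a non-Gaussian object where no off-the-shelf Gaussian support result applies.
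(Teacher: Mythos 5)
Your proof is correct and follows essentially the same route as the paper's: both arguments combine the homeomorphism property of $T_h$ on $\check{\mathcal C}^{0,\alpha-2}(\mathbb T^2)$ with the support characterisation and the Cameron--Martin equivalence of the laws of $\xi$ and $T_h\xi$. The only cosmetic difference is that you pull back the neighbourhood directly via $T_{-h}U$, whereas the paper invokes continuity of $T_h$ to produce an open $U\ni x$ with $T_hU\subseteq V$; the measure-theoretic core of the two arguments is identical.
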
 
\begin{proof} (Lemma) Let $x\in\text{supp}(\xi_\star\mathbb P)$, then - by definition - we know that for any open set $U$ of $\check{\mathcal C}^{\alpha-2}(\mathbb T^2)$ such that $x\in U$ we have 
$\mathbb P(\xi\in U)>0$. Let $V$ be an open set such that $T_hx\in V$. By continuity of $T_h$ we know that there exist an open set $U$ such that $x\in U$ and $T_hU$ is contained in $V$ so that
$
\mathbb P(\xi\in V)\geq\mathbb P(T_{-h}\xi\in U) > 0 
$
where the final  strict positivity follows from $P(\xi\in U)>0$ and the Cameron--Martin theorem. As a consequence, $T_hx\in\text{supp}(\xi_\star\mathbb P)$. 
\end{proof}
We now proof the other half of Proposition \ref{prop:whitenoisesupport}, that is $\text{supp}(\xi_\star\mathbb P)\supseteq\check{\mathcal C}^{0,\alpha-2}(\mathbb T^2)$. Take $x\in\text{supp}(\xi_\star\mathbb P)$. From the first inclusion, it is clear that there exist $(x^n)_n$ a sequence of elements in $\mathcal H$  such that $\lim _{n\to+\infty}x^n=x$ in the space $\check{\mathcal C}^{\alpha-2}(\mathbb T^2)$, equivalently $T_{-x^n}x\to^{n+\infty}0$. By the invariance of the support  of $\xi^\star\mathbb P$ under the translation operator, $T_{-x^n}x\in\text{supp}(\xi_\star\mathbb P)$ and using the fact the support is a closed set in $\mathcal C^{\alpha-2}(\mathbb T^2)$ we obtain immediately that $0\in\text{supp}(\xi_\star\mathbb P)$. Then any $T_h 0 = h$ is also in the support, hence $\mathcal H\subseteq\text{supp}(\xi_\star\mathbb P)$ which gives the second inclusion. 

\subsection{Support theorem for the enhanced white noise}   
The goal of this section is to characterize the support of the law of $\Xi^{pam}$. We have
\begin{theorem}\label{th:supp-pam}
Let $\alpha\in (2/3,1)$ and let $(\Xi^{pam})_\star\mathbb P$ the law of $\Xi^{pam}$ viewed as a Borel measure on $\mathscr X^\alpha$. 
Then\footnote{The second equality was already established in Lemma \ref{Xalpalt}.}
$$
\text{supp}((\Xi^{pam})_\star\mathbb P)=  \mathscr X^\alpha = \overline{\left\{(\theta,\theta\circ K\theta-c):\quad \theta\in \mathcal H,c\in\mathbb R\right\}}^{\mathscr H^\alpha}.
$$
\end{theorem}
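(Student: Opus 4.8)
The plan is to run the same two-step philosophy used for Proposition \ref{prop:whitenoisesupport}, but lifted to the enhanced noise. The inclusion $\text{supp}((\Xi^{pam})_\star\mathbb P)\subseteq\mathscr X^\alpha$ is immediate, since $\mathscr X^\alpha$ is closed and $\Xi^{pam}\in\mathscr X^\alpha$ almost surely by Theorem \ref{th:Pam-exis}. Writing $\mathcal S:=\text{supp}((\Xi^{pam})_\star\mathbb P)$ and $G:=\{(\theta,\theta\circ K\theta-c):\theta\in\mathcal H,\ c\in\mathbb R\}$, it then remains to prove $G\subseteq\mathcal S$; as $\mathcal S$ is closed and $\mathscr X^\alpha=\overline{G}^{\mathscr H^\alpha}$ by Lemma \ref{Xalpalt}, this gives the reverse inclusion.

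\emph{Step 1 (enhanced translation).} For $h\in\mathcal H$ I would define $\tau_h:\mathscr X^\alpha\to\mathscr H^\alpha$ by
\[
\tau_h(v^1,v^2)=\big(v^1+h,\ v^2+v^1\circ Kh+h\circ Kv^1+h\circ Kh\big).
\]
First I would check that $\tau_h$ maps $\mathscr X^\alpha$ continuously into itself: since $h\in\mathcal H=\check{L}^2$ one has $Kh\in H^2$ and $Kv^1\in\mathcal C^\alpha$, so the second bound of Proposition \ref{proposition:Bony-estim}(iii), together with Proposition \ref{prop:multiply} and the Besov embedding Proposition \ref{proposition:Bes-emb}, yields $v^1\circ Kh,\,h\circ Kv^1\in\mathcal C^{\alpha-1}\hookrightarrow\mathcal C^{2\alpha-2}$, with norms bounded by $\|v^1\|_{\alpha-2}\|h\|_{L^2}$ (the term $h\circ Kh$ being treated as in Lemma \ref{Xalpalt}). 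A direct computation using bilinearity and symmetry of $\circ$ shows $\tau_{-h}\tau_h=\text{Id}$, so $\tau_h$ is a homeomorphism of $\mathscr X^\alpha$. The crucial point is compatibility with the renormalization: since $c_\eps$ does not depend on the noise, mollifying $\xi+h$ and subtracting the same $c_\eps$ gives the exact identity $\mathscr M((\xi+h)^\eps,c_\eps)=\tau_{h^\eps}\mathscr M(\xi^\eps,c_\eps)$, and the extra terms $\xi^\eps\circ Kh^\eps$, $h^\eps\circ K\xi^\eps$, $h^\eps\circ Kh^\eps$ all converge \emph{without} renormalization by the estimates just quoted. Passing to the limit yields $\Phi(\xi+h)=\tau_h\Phi(\xi)$, where $\Phi(\xi):=\Xi^{pam}$. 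Combined with the Cameron--Martin theorem (equivalence of the laws of $\xi$ and $\xi+h$) this makes the law of $\tau_h\Xi^{pam}$ equivalent to that of $\Xi^{pam}$, so that, exactly as in the translation-invariance argument for Proposition \ref{prop:whitenoisesupport}, $\mathcal S$ is invariant under $\tau_h$ for every $h\in\mathcal H$.

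\emph{Step 2 (reduction to constant seeds).} For $\theta\in\mathcal H$ a short computation gives $\tau_{-\theta}(\theta,\theta\circ K\theta-c)=(0,-c)$, so by $\tau$-invariance $(\theta,\theta\circ K\theta-c)\in\mathcal S$ if and only if $(0,-c)\in\mathcal S$; thus $G\subseteq\mathcal S$ reduces to showing $(0,-c)\in\mathcal S$ for every $c\in\mathbb R$. Moreover, given one seed $(0,-c_0)\in\mathcal S$, I would apply $\tau_{h_N}$ for a highly oscillatory sequence $h_N\in\mathcal H$ with $\|h_N\|_{\alpha-2}\to0$ and $h_N\circ Kh_N\to b$ (a constant, necessarily $b\ge0$ because the spatial mean of $h\circ Kh$ equals $\|h\|_{\dot H^{-1}}^2\ge0$); letting $N\to\infty$ and using closedness of $\mathcal S$ produces $(0,b-c_0)\in\mathcal S$. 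Hence a single seed $(0,-c_0)$ delivers all $(0,-c)$ with $c\le c_0$, and it suffices to exhibit seeds $(0,-c_0)$ with $c_0$ ranging over a set unbounded above; this is precisely the role played by the accumulation at $+\infty$ mentioned after Theorem \ref{th:main-result}.

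\emph{Step 3 (the seeds; main obstacle).} It remains to show, for $c_0=c_M:=\sum_{0<|k|\le M}|k|^{-2}\to+\infty$, that $\Xi^{pam}$ charges every neighbourhood of $(0,-c_M)$. The plan is a direct small-ball estimate on the Gaussian event $A_{M,\delta}:=\{|\hat\xi(k)|<\delta\ \text{for all}\ 0<|k|\le M\}$, which has positive probability. On $A_{M,\delta}$ the low frequencies of $\xi$ are suppressed, and since $\alpha<1$ the $\mathcal C^{\alpha-2}$-norm is dominated by low-frequency blocks (the high blocks contribute $\sup_{2^j\gtrsim M}2^{j(\alpha-1)}$, small for $M$ large), so $\|\xi\|_{\alpha-2}$ is small; simultaneously the spatial mean of $\Xi^2$, namely $\sum_{k\ne0}|k|^{-2}(|\hat\xi(k)|^2-1)$, is close to $-c_M$ up to a tail of small mean and variance. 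The genuinely delicate point, and the main obstacle, will be to control the mean-zero fluctuation $\Xi^2-\langle\Xi^2\rangle$ in $\mathcal C^{2\alpha-2}$ on this event: one must show it can be made small with positive probability, so that $\Xi^2$ is truly close to the constant $-c_M$ and not merely of the right average. This is a second-chaos small-ball estimate exploiting the subcritical gain $2\alpha-2<\alpha-1$ together with the frequency localisation, and it is here that the non-Gaussian nature of $\Xi^{pam}$ enters essentially, in contrast to the purely Gaussian argument behind Proposition \ref{prop:whitenoisesupport}. Once the seeds are secured, Steps 1--2 and closedness of $\mathcal S$ yield $G\subseteq\mathcal S$, hence $\mathcal S=\mathscr X^\alpha$.
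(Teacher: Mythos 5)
Your Steps 1 and 2 are sound and essentially coincide with the paper's own strategy: your $\tau_h$ is exactly the paper's translation operator $T_h$, its continuity follows from the same Bony/Schauder estimates, the identity $\mathscr M((\xi+h)^\eps,c_\eps)=\tau_{h^\eps}\mathscr M(\xi^\eps,c_\eps)$ combined with the Cameron--Martin theorem gives invariance of the support exactly as in Corollary \ref{cor:inv-tran-1}, and the reduction to constant seeds $(0,-c)$ together with the oscillatory shifts (the paper's Lemma \ref{lemma:pure-area}) is correct bookkeeping. The gap is Step 3, which you yourself flag as ``the main obstacle'': you never prove that $\Xi^{pam}$ charges every neighbourhood of $(0,-c_M)$; you only describe the conditional second-chaos small-ball estimate that would be needed. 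This is the heart of the theorem, not a technical afterthought. Conditioning on the event $A_{M,\delta}$ destroys the unconditional chaos/moment computations underlying Theorem \ref{th:Pam-exis}: one would have to redo all Besov--chaos estimates for $\xi\circ K\xi-c_\eps$ with frequency restrictions, control the resonant cross-terms between the conditioned low modes and the free high modes near the boundary $|k_1|\sim|k_2|\sim M$, and show the fluctuation of $\Xi^2$ around its spatial mean is small in $\mathcal C^{2\alpha-2}$ \emph{on} this event with positive probability. None of this is carried out, so the central step is missing.

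The paper avoids this difficulty entirely by a different and much lighter device: instead of conditioning the noise to be small, it translates by a \emph{random} Cameron--Martin element built from the noise itself, $h=\xi^n(\omega)-X^{n,c_n-a}$, where $\xi^n$ is the frequency truncation of the realized noise and $X^{n,c_n-a}$ the oscillatory compensator of Lemma \ref{lemma:pure-area}. Then $T_{-h}\Xi^{pam}$ has first component $\xi-\xi^n+X^{n,c_n-a}\to0$, while in the second component the pairing of $\xi^n$ against $\xi$ automatically regenerates the renormalization constant: after choosing the oscillation frequency so that $X^{n,c_n-a}$ does not resonate with $\xi-\xi^n$, everything reduces to the unconditional convergences $\xi^n\circ K\xi^n-c_n\to\Xi^{2,pam}$, $\xi\circ K\xi^n-c_n\to\Xi^{2,pam}$ and $\xi^n\circ K\xi-c_n\to\Xi^{2,pam}$ (Lemma \ref{lemma:mixed-conv}, a plain $L^p(\Omega)$ second-chaos computation with no conditioning), whence $T_{-h}\Xi^{pam}\to(0,-a)$ in probability for \emph{any} $a\in\mathbb R$, not only for $a$ in an unbounded set. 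Support invariance under $T_h$ and closedness of the support then give $(0,-a)\in\text{supp}((\Xi^{pam})_\star\mathbb P)$ and hence the full inclusion. To complete your own route you would need to supply the conditional small-ball estimate in full; the paper's random-shift trick is precisely the idea your proposal is missing.
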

\begin{remark} 
Recently it was proved in~\cite{CR} that $\mathscr X^\alpha=\check{\mathcal C}^{0,\alpha-2}(\mathbb T^2)
\times \check{\mathcal C}^{0,2\alpha-2}(\mathbb T^2) $
\end{remark}  
Let us here recall 
\begin{equation} \label{approxim}
\Xi^{pam}=\lim_{\eps\to0}\mathscr M(\xi^\eps,c^\eps) \in \mathscr X^\alpha \text{ a.s.}
\end{equation}
with convergence in the space $\mathscr H^\alpha$, and (Polish) space $\mathscr X^\alpha$ as introduced in Definition~\ref{def:rd}, where $c^\eps=\mathbb E[\xi^\eps\circ K\xi^\eps]=\sum_{k\ne0}|\psi(\eps k)|^2|k|^{-2}$. As a trivial consequence, 
we note that the support of the law of 
$$
\Xi^{pam}[a] :=\lim_{\eps\to 0}\mathscr M(\xi^\eps,c^\eps+a)=\Xi^{pam}+(0,-a)
$$
is not affected by the perturbation $(0,-a)$, i.e.
$$
\text{supp}((\Xi^{pam}[a])_\star\mathbb P)=\overline{\left\{(\theta,\theta\circ K\theta-c):\quad \theta\in \mathcal H,c\in\mathbb R\right\}}^{\mathscr H^\alpha}.
$$
From the equation~\eqref{approxim} we get immediately the easy half of Theorem \ref{th:supp-pam},
$$
\text{supp}((\Xi^{pam})_\star\mathbb P)\subseteq\mathscr X^\alpha.
$$
To obtain the other inclusion we will need the following lemma which can be seen in the context of rough paths, 
 as analogue of highly oscillatory approximations to the so called pure area rough path 
(will also be used in Lemma \ref{lemma:con-cons} below.) 
\begin{lemma}\label{lemma:pure-area}
Let $c\geq0$ then there exist $(X^{n,c})_{n\in\mathbb N}$, smooth functions on the $\mathbb{T}^2$, such that
\begin{enumerate}
\item $||X^{n,c}||_{\alpha-2}\to^{N\to+\infty}0$
\item $||X^{n,c}\circ KX^{n,c}-c||_{2\alpha-2}\to^{N\to+\infty}0$
\end{enumerate}
for $\alpha<1$. In fact, with $z:=(1,1)$ we may take 
\begin{equation*}
X^{n,c}(x)=c^{1/2}2^{n+1}\cos(2^{n}\langle z,x\rangle).
\end{equation*}
\end{lemma}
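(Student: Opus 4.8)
The plan is to verify directly that the explicitly prescribed trigonometric functions do the job; the only genuine ingredient is the amplitude $c^{1/2}2^{n+1}$, which is calibrated so that the zero-frequency part of the self-resonance equals exactly the prescribed constant $c$. Throughout I would use the elementary fact that a (smooth) function $h$ whose Fourier support lies in a single dyadic annulus $\{|k|\sim 2^{j}\}$ satisfies $\|h\|_{\beta}\lesssim 2^{j\beta}\|h\|_{L^\infty}$, which is immediate from the definition $\|h\|_{\beta}=\sup_{i}2^{i\beta}\|\Delta_i h\|_{L^\infty}$ together with $\|\Delta_i h\|_{L^\infty}\le\|h\|_{L^\infty}$. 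Writing $a_n:=c^{1/2}2^{n+1}$, the function $X^{n,c}=a_n\cos(2^{n}\langle z,x\rangle)$ is a single Fourier mode at frequency $2^n z$, with $|2^n z|=2^n\sqrt2\sim 2^n$, hence it is localized in one Littlewood--Paley block at level $\sim n$. (The degenerate case $c=0$ gives $X^{n,0}\equiv0$ and both claims hold trivially, so I would assume $c>0$.)

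First I would prove (1). By the localization remark above, $\|X^{n,c}\|_{\alpha-2}\lesssim 2^{n(\alpha-2)}\|X^{n,c}\|_{L^\infty}=2^{n(\alpha-2)}a_n=2c^{1/2}2^{n(\alpha-1)}$, which tends to $0$ as $n\to\infty$ since $\alpha<1$.

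Next I would compute the resonant product explicitly. Since $K=(-\Delta)^{-1}$ acts on the mode $2^n z$ through the scalar $|2^n z|^{-2}=2^{-(2n+1)}$, one has $KX^{n,c}=a_n 2^{-(2n+1)}\cos(2^{n}\langle z,x\rangle)$. Crucially, both $X^{n,c}$ and $KX^{n,c}$ have Fourier support in the \emph{same} single annulus, so the two Bony paraproducts $X^{n,c}\prec KX^{n,c}$ and $X^{n,c}\succ KX^{n,c}$ vanish outright (their defining sums require the two frequency scales to be separated by more than one block), and therefore the resonant term coincides with the ordinary pointwise product. Using $\cos^2\theta=\tfrac12(1+\cos 2\theta)$,
\[
X^{n,c}\circ KX^{n,c}=X^{n,c}\,KX^{n,c}=a_n^2\,2^{-(2n+2)}\bigl(1+\cos(2^{n+1}\langle z,x\rangle)\bigr)=c+c\cos(2^{n+1}\langle z,x\rangle),
\]
the last equality being exactly the purpose of the amplitude choice, namely $a_n^2\,2^{-(2n+2)}=c$.

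Finally, $X^{n,c}\circ KX^{n,c}-c=c\cos(2^{n+1}\langle z,x\rangle)$ is again a single Fourier mode, now at frequency $2^{n+1}z$, hence localized at level $\sim n+1$ with $L^\infty$ norm $c$; so $\|X^{n,c}\circ KX^{n,c}-c\|_{2\alpha-2}\lesssim 2^{(n+1)(2\alpha-2)}c\to 0$, since $2\alpha-2<0$. This gives (2). The whole argument is a high-frequency oscillation computation—the analogue of the pure-area rough path—and essentially no step is an obstacle; the only point requiring care is the bookkeeping that forces the zero mode of the self-interaction to equal $c$ rather than some uncontrolled constant. The exact vanishing of the paraproducts (rather than mere smallness of their norms) is precisely what makes this identity exact, and so lets the subtraction of $c$ remove the constant mode entirely.
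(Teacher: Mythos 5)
Your proof is correct and follows essentially the same route as the paper: an explicit high-frequency mode with amplitude calibrated so that the zero-frequency part of the self-resonance is exactly $c$, together with the observation that single-annulus frequency localization makes the resonant product coincide with the pointwise product. The only cosmetic difference is that the paper decomposes the cosine into complex exponentials $Y^n=2^n e^{i\langle 2^n z,x\rangle}$ and tracks the cross term $Y^n\circ \overline{KY^n}$ to produce the constant, whereas you work with the real cosine directly via the double-angle identity; the computations are identical.
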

\begin{proof}
Let $Y^n(x)=2^{n}e^{i\langle 2^{n}z,x\rangle}$ . Then we see that by a simple computation that 
$$
\Delta_qY^n(x)=\rho(2^{-q+n}z)2^{n}e^{i\langle 2^{n}z,x\rangle},\quad\Delta_{-1}Y^n(x)=\chi(2^{n}z)2^{n}e^{i\langle 2^{n}z,x\rangle}
$$
for all $q\geq0$ and then we get easily that 
$$
||Y^n||_{\alpha-2}\lesssim\max(2^{-n(1-\alpha)},2^n\chi(2^nz))\to^{n\to+\infty}0
$$
And by a direct computation we see also that 
$$
Y^n\circ KY^n=\frac{1}{2^{2n}|z|^2}Y^n\circ Y^n=|z|^{-2}e^{i2^{n+1}\langle z,x\rangle}
$$
and then we get that 
$$
||Y^n\circ KY^n||_{2\alpha-2}\lesssim\max(2^{-n(2-2\alpha)},\chi(2^n|z|))\to^{n\to+\infty}0
$$
Now when $c>0$ we can take $X^{n,c}(x)=c^{1/2}(Y^n(x)+\overline {Y^n(x)})=c^{1/2}2^{n+1}\cos(2^{n}\langle z,x\rangle)$ where $\overline {Y^n}$ is the complex conjugate of $Y^n$ and then obviously we have that :
$$
||X^{n,c}||_{\alpha-2}\to^{n+\to\infty}0
$$
moreover we have the following equality
$$
X^{n,c}\circ  KX^{n,c}=2\mathscr Re(Y^n\circ KY^n)+2Y^n\circ \overline{KY^n}=2\mathscr Re(Y^n\circ KY^n)+c
$$
with $\mathscr Re(Y^n\circ KY^n)$ is the real part of $Y^n\circ KY^n$. And then we obtain immediately that 
$$
||X^{n,c}\circ  KX^{n,c}-c||_{2\alpha-2}\lesssim ||Y^n\circ KY^n||_{2\alpha-2}
$$
\end{proof}
And then we obtain immediately the following result, to be compared with Lemma \ref{Xalpalt}.

\begin{proposition}
Let $\alpha\in(2/3,1)$ then we have that 
$$
\mathscr X^{\alpha}=\overline{\left\{(\theta,\theta\circ K\theta-c):\quad \theta\in \mathcal H,c>0\right\}}^{\mathscr H^\alpha}
$$
\end{proposition}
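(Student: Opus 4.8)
The plan is to bootstrap from Lemma~\ref{Xalpalt}, which already establishes the identity with $c$ ranging over all of $\mathbb{R}$, by showing that restricting to $c>0$ leaves the $\mathscr{H}^\alpha$-closure unchanged. One inclusion is free: the set $\{(\theta,\theta\circ K\theta-c):\theta\in\mathcal H,\,c>0\}$ sits inside the set with $c\in\mathbb R$, so taking $\mathscr H^\alpha$-closures and invoking Lemma~\ref{Xalpalt} yields the inclusion into $\mathscr X^\alpha$. For the reverse inclusion, since the closure of a closure is the closure, it suffices to show that each generator $(\theta,\theta\circ K\theta-c)$ with $\theta\in\mathcal H$ and \emph{arbitrary} $c\in\mathbb R$ lies in the $\mathscr H^\alpha$-closure of the $c>0$ family.

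Fix such a generator and choose $b>\max(0,-c)$ so that $\tilde c:=c+b>0$. Following the pure-area heuristic behind Lemma~\ref{lemma:pure-area}, I would inject the \emph{area} $b$ into the second level without disturbing the first by setting $\theta_n:=\theta+X^{n,b}$, where $X^{n,b}$ are the highly oscillatory functions of that lemma. Each $X^{n,b}$ has zero spatial mean (its single active mode sits at the non-zero frequency $2^n z$), hence $\theta_n\in\mathcal H$ and $(\theta_n,\theta_n\circ K\theta_n-\tilde c)$ is a bona fide member of the $c>0$ family. The first component converges, $\|\theta_n-\theta\|_{\alpha-2}=\|X^{n,b}\|_{\alpha-2}\to0$, by Lemma~\ref{lemma:pure-area}(1). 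Expanding the resonant product bilinearly,
\[
\theta_n\circ K\theta_n-\tilde c=\big(\theta\circ K\theta-c\big)+\big(X^{n,b}\circ KX^{n,b}-b\big)+\theta\circ KX^{n,b}+X^{n,b}\circ K\theta,
\]
the diagonal term tends to $0$ in $\mathcal C^{2\alpha-2}$ by Lemma~\ref{lemma:pure-area}(2), and the constants cancel to leave exactly $\theta\circ K\theta-c$ in the limit.

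What remains, and is the only genuine work, is to verify that the two cross terms vanish in $\mathcal C^{2\alpha-2}$. For $X^{n,b}\circ K\theta$ I would apply the $H^\beta$-version of the resonance estimate, Proposition~\ref{proposition:Bony-estim}(iii): since $\theta\in L^2=H^0$ gives $K\theta\in H^2$ while $X^{n,b}\in\mathcal C^{\alpha-2}$ with $(\alpha-2)+2=\alpha>0$, one gets $\|X^{n,b}\circ K\theta\|_{\alpha-1}\lesssim\|X^{n,b}\|_{\alpha-2}\|\theta\|_{L^2}\to0$, and $\alpha-1\ge2\alpha-2$ upgrades this to the $\mathcal C^{2\alpha-2}$-norm. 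For $\theta\circ KX^{n,b}$ I would use that $KX^{n,b}$ is a single Fourier mode at frequency $\sim2^n$ of amplitude $\sim b^{1/2}2^{-n}$, whence $\|KX^{n,b}\|_{\mathcal C^s}\lesssim b^{1/2}2^{-n(1-s)}$; the same estimate with $f=KX^{n,b}\in\mathcal C^s$ and $g=\theta\in H^0$ gives $\|\theta\circ KX^{n,b}\|_{s-1}\lesssim\|KX^{n,b}\|_{\mathcal C^s}\|\theta\|_{L^2}\to0$ for any $s\in(2\alpha-1,1)$, a range that is non-empty precisely because $\alpha<1$ and consists of positive numbers because $\alpha>1/2$, and $s-1\ge2\alpha-2$ again controls the desired norm. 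I expect this second cross term to be the main technical obstacle, as it is the one place where the low regularity of $\theta$ meets the high-frequency perturbation and where the constraint $\alpha\in(2/3,1)$ is genuinely felt.
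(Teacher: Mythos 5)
Your proposal is correct and takes essentially the same route as the paper: the paper likewise fixes $\theta\in\mathcal H$ and an arbitrary target constant, perturbs by the pure-area oscillation of Lemma~\ref{lemma:pure-area} so that the excess positive constant is cancelled by the diagonal term, and kills both cross terms via the mixed $\mathcal C^\alpha$--$H^\beta$ Bony resonance estimate together with the Schauder gain of $K$. Your explicit exponent $s\in(2\alpha-1,1)$ corresponds to the paper's choice $s=\alpha$, and your observation that $X^{n,b}$ has zero mean (so that $\theta_n\in\mathcal H$) merely makes explicit a point the paper leaves tacit.
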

\begin{proof}
Let $\theta\in \mathcal H$, $a\in\mathbb R$, $c>\max(0,a)$ and take $X^{n,c-a}$ as in the Lemma~\ref{lemma:pure-area} and define $\theta^n=\theta+X^{n,c-a}$ then of course we have that 
$$
||\theta^n-\theta||_{2\alpha-2}=||X^{n,c-a}||_{2\alpha-2}\to^{n\to+\infty}0
$$
and a quick computation gives 
$$
\theta^n\circ K\theta^n=\theta\circ K\theta+X^{n,c-a}\circ K\theta+\theta\circ  KX^{n,c-a}+X^{n,c-a}\circ  KX^{n,c-a}
$$
And using the Bony estimate for the resonating term we get that 
\begin{equation*}
\begin{split}
||\theta\circ  KX^{n,c-a}||_{2\alpha-2}&\lesssim||\theta\circ  KX^{n,c-a}||_{\alpha-1}
\\&\lesssim||\theta||_{L^2(\mathbb T^2)}|| KX^{n,c-a}||_{\alpha}\to^{n\to+\infty}0
\end{split}
\end{equation*}
and by the same way we show that $||K\theta\circ X^{n,c-a}||_{2\alpha-2}$ vanish when $n$ go to the infinity. Then we have shown that $(\theta^n,\theta^n\circ K\theta^n-c)$ converge to $(\theta,\theta\circ K\theta-a)$ which gives : 
$$
(\theta,\theta\circ K\theta-a)\in\overline{\left\{(h,h\circ Kh-c):\quad h\in \mathcal H,c>0\right\}}^{\mathscr H^\alpha}
$$
and finally we get 
$$
\mathscr X^\alpha\subseteq\overline{\left\{(\theta,\theta\circ K\theta-c):\quad \theta\in \mathcal H, c>0\right\}}^{\mathscr H^\alpha}
$$
of course the other inclusion is an obvious fact.
\end{proof}
We get from this result that
$$
\text{supp}((\Xi^{pam})_\star\mathbb P)\subseteq\overline{\left\{(\theta,\theta\circ K\theta-c):\quad \theta\in \mathcal H,c>0\right\}}^{\mathscr H^\alpha}=\mathscr X^\alpha.
$$
Now let us focus on the other inclusion. As in the case of the white noise we will need to introduce an appropriate translation operator on the space $\mathscr X^{\alpha}$. Let $h\in\mathcal H$ and we define for $\Xi=(\Xi^1,\Xi^2)\in\mathscr H^\alpha$ the following translation operator :
$$
T_{h}\Xi=(\Xi^1+h,\Xi^2+h\circ  Kh+h\circ K\Xi^1+\Xi^1\circ Kh).
$$
Is not difficult to see that $T_{h}$  is a continuous invertible map on $\mathscr H^\alpha$, the inverse given by  $T_{-h}$. More precisely, we have
%
\begin{proposition}
Let $h\in \mathcal H$. Then we have
$$
||T_{h}\Xi_1-T_{h}\Xi_2||_{\mathscr H^\alpha}\leq 2(||h||_{\mathcal H}+1)||\Xi^1_1-\Xi^1_2||_{\alpha-2}+||\Xi^2_1-\Xi^2_2||_{2\alpha-2}.
$$ 
\end{proposition}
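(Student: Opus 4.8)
The final proposition asserts a Lipschitz-type estimate for the translation operator
$$
T_{h}\Xi=(\Xi^1+h,\ \Xi^2+h\circ Kh+h\circ K\Xi^1+\Xi^1\circ Kh)
$$
on the space $\mathscr H^\alpha=\mathcal C^{\alpha-2}\times\mathcal C^{2\alpha-2}$, namely
$$
\|T_{h}\Xi_1-T_{h}\Xi_2\|_{\mathscr H^\alpha}\leq 2(\|h\|_{\mathcal H}+1)\|\Xi^1_1-\Xi^1_2\|_{\alpha-2}+\|\Xi^2_1-\Xi^2_2\|_{2\alpha-2}.
$$
Let me think about what is actually being claimed and whether the plan is just a routine bilinearity computation.

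The first component of $T_h\Xi$ is $\Xi^1+h$, which is affine in $\Xi^1$; the second component is $\Xi^2+(\text{quadratic in }h)+(\text{bilinear in }h,\Xi^1)$. Since $T_h$ is affine in $\Xi=(\Xi^1,\Xi^2)$ for fixed $h$, the difference $T_h\Xi_1-T_h\Xi_2$ is linear in $\Xi_1-\Xi_2$ and the constant-in-$\Xi$ term $h\circ Kh$ cancels.

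**Plan.**

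The plan is to simply subtract the two images and estimate each component of $\mathscr H^\alpha$ separately, the norm on $\mathscr H^\alpha$ being the sum (or max) of the $\mathcal C^{\alpha-2}$-norm of the first slot and the $\mathcal C^{2\alpha-2}$-norm of the second slot. First I would write, setting $\delta^1:=\Xi^1_1-\Xi^1_2$ and $\delta^2:=\Xi^2_1-\Xi^2_2$,
$$
(T_h\Xi_1-T_h\Xi_2)^1=\delta^1,\qquad
(T_h\Xi_1-T_h\Xi_2)^2=\delta^2+h\circ K\delta^1+\delta^1\circ Kh,
$$
where the purely-$h$ term $h\circ Kh$ cancels in the difference. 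The first component contributes exactly $\|\delta^1\|_{\alpha-2}$ to the $\mathscr H^\alpha$-norm.

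For the second component I would bound $\|\delta^2\|_{2\alpha-2}$ trivially by itself, and control the two resonance terms by the Bony estimate of Proposition~\ref{proposition:Bony-estim}(iii) together with the Schauder estimate of Proposition~\ref{prop:multiply} for $K=(-\Delta)^{-1}$, exactly as in the proof of Lemma~\ref{Xalpalt}. Concretely, since $h\in\mathcal H=\check L^2$, the Besov embedding gives $\|h\|_{\alpha-2}\lesssim\|h\|_{\mathcal H}$, and $K$ gains two degrees of regularity so $\|K\delta^1\|_{\alpha}\lesssim\|\delta^1\|_{\alpha-2}$ and $\|Kh\|_{\alpha}\lesssim\|h\|_{\mathcal H}$. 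Applying part (iii) of the Bony estimate to the resonant products (whose Hölder exponents $(\alpha-2)+\alpha=2\alpha-2>0$ for $\alpha>1$ — here one uses the mixed $H^\beta$-version of (iii), as in Lemma~\ref{Xalpalt}, so that one factor sits in $\mathcal H=\check H^0$), I would obtain
$$
\|h\circ K\delta^1\|_{2\alpha-2}\lesssim\|h\|_{\mathcal H}\|\delta^1\|_{\alpha-2},\qquad
\|\delta^1\circ Kh\|_{2\alpha-2}\lesssim\|\delta^1\|_{\alpha-2}\|h\|_{\mathcal H}.
$$
Summing and tracking the implicit constants so that they combine into the stated factor $2(\|h\|_{\mathcal H}+1)$ in front of $\|\delta^1\|_{\alpha-2}$ (the ``$+1$'' absorbing the passage of $\delta^1$ through to the first slot and allowing the bilinear constants to be normalised) yields the claimed inequality.

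**Main obstacle.** The only genuine subtlety — and the step I would be most careful about — is justifying the resonant-product estimate at the regularity of $\mathscr H^\alpha$. Naively pairing $\mathcal C^{\alpha-2}$ with $\mathcal C^{\alpha}$ via Proposition~\ref{proposition:Bony-estim}(iii) requires $(\alpha-2)+\alpha>0$, i.e. $\alpha>1$, which fails in our range $\alpha<1$; this is precisely why one must exploit that $h$ (and likewise $\delta^1$ when paired against a function in $\mathcal H$) lives in the Cameron--Martin space $\mathcal H=\check L^2=\check H^0$ and invoke the mixed $\mathcal C^\alpha\times H^\beta$ form of the Bony resonance bound with the accompanying $-d/2$ loss, exactly the mechanism that made the computation in Lemma~\ref{Xalpalt} finite. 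Matching the resulting constants to the clean prefactor $2(\|h\|_{\mathcal H}+1)$ is then bookkeeping rather than analysis.
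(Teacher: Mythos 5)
Your proof is correct and follows essentially the same route as the paper's: you cancel the $h\circ Kh$ term, bound the two resonant products $h\circ K(\Xi_1^1-\Xi_2^1)$ and $(\Xi_1^1-\Xi_2^1)\circ Kh$ at regularity $\alpha-1$ via the mixed $\mathcal C^\alpha\times H^\beta$ Bony estimate (Proposition~\ref{proposition:Bony-estim}(iii)) together with the Schauder gain of $K$ (Proposition~\ref{prop:laplacian-inverse}), and then use $\mathcal C^{\alpha-1}\hookrightarrow\mathcal C^{2\alpha-2}$, valid since $\alpha\le 1$. Your key observation, that the naive H\"older--H\"older pairing would require $\alpha>1$ and so one must exploit $h\in\mathcal H=\check H^0$ exactly as in Lemma~\ref{Xalpalt}, is precisely the mechanism the paper's proof uses.
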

\begin{proof}
By definition we have that : 
\begin{equation*}
\begin{split}
||T_{h}\Xi_1-T_{h}\Xi_2||_{\mathscr H^\alpha}&\leq ||\Xi_1^1-\Xi_2^1||_{\alpha-2}+||\Xi_1^2-\Xi_2^2||_{2\alpha-2}+||h\circ (K\Xi_1^1-K\Xi^1_2)||_{2\alpha-2}
\\&+||(\Xi_1^1-\Xi_2^1)\circ K h||_{2\alpha-2}.
\end{split}
\end{equation*} 
Using the Bony estimates for the resonating term (see~\eqref{proposition:Bony-estim}) and the Schauder estimate for $K$(Proposition~\ref{prop:laplacian-inverse}) we obtain that : 
$$
||h\circ \left(K\Xi^1_1-K\Xi^1_2\right)||_{2\alpha-2}\lesssim||h\circ \left(K\Xi^1_1-K\Xi_2^1\right)||_{\alpha-1}\lesssim||\Xi_1^1-\Xi_2^1||_{\alpha-2}||h||_{\mathcal H}.
$$
Now by the same argument we get :
$$
||Kh\circ(\Xi^1_1-\Xi^1_2)||_{2\alpha-2}\lesssim||\Xi^1_1-\Xi^1_2||_{\alpha-2}||h||_{\mathcal H}
$$
which completes the proof.
\end{proof}
Now we have the following proposition which is the equivalent of the Cameron--Martin theorem for $\Xi^{pam}$.
\begin{proposition}
We have
$$
\mathbb P\left(\left\{\omega\in\Omega;\quad T_h\Xi^{pam}(\omega)=\Xi^{pam}(\omega+h)\quad\text{ for all }h\in\mathcal H\right\}\right)=1.
$$
As a consequence (of the standard Cameron--Martin theorem for Gaussian measures) the laws of  $\Xi^{pam}$ and $T_h\Xi^{pam}$ are equivalent.
\end{proposition}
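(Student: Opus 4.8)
The plan is to reduce everything to the exact behaviour of the lift under the Cameron--Martin shift at the regularized level, and only then pass to the limit. Write $\tau_h\omega=\omega+h$ for the shift by $h\in\mathcal H$ on the white-noise space; on Fourier modes it acts by $\hat\xi(k)(\omega+h)=\hat\xi(k)(\omega)+\hat h(k)$, so the mollified noise of \eqref{eq:mollification} transforms as $\xi^\eps(\omega+h)=\xi^\eps(\omega)+h^\eps$, where $h^\eps:=\sum_{k\ne0}\psi(\eps k)\hat h(k)e_k$ is the $\psi$-mollification of $h$. Since $\mathscr M(\theta,c)=(\theta,\theta\circ K\theta-c)$ is quadratic and the bilinear, symmetric resonant term expands as $(\theta+g)\circ K(\theta+g)=\theta\circ K\theta+\theta\circ Kg+g\circ K\theta+g\circ Kg$, one checks by inspection that $\mathscr M(\theta+g,c)=T_g\mathscr M(\theta,c)$ for every smooth $\theta,g$ and $c\in\mathbb R$; crucially the diverging constant $c^\eps$ renormalizes only the diagonal self-interaction and is untouched by the (finite) cross terms. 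Taking $\theta=\xi^\eps(\omega)$, $g=h^\eps$, $c=c^\eps$ yields the key identity, valid for all $\omega$, all $h$ and all $\eps$,
$$
\mathscr M(\xi^\eps(\omega+h),c^\eps)=T_{h^\eps}\,\mathscr M(\xi^\eps(\omega),c^\eps).
$$

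Next I would pass to the limit $\eps\to0$. Let $\Omega_1\subseteq\Omega$ be the set on which $\mathscr M(\xi^\eps(\cdot),c^\eps)$ converges in $\mathscr H^\alpha$; by Theorem~\ref{th:Pam-exis} one has $\mathbb P(\Omega_1)=1$, and on $\Omega_1$ the limit is $\Xi^{pam}$. Fix $\omega\in\Omega_1$ and any $h\in\mathcal H$. Since $\psi$ is bounded with $\psi(0)=1$ and continuous at the origin, $h^\eps\to h$ in $\mathcal H=\check L^2$ by dominated convergence, and $\|h^\eps\|_{\mathcal H}$ stays bounded. Writing $a^\eps:=\mathscr M(\xi^\eps(\omega),c^\eps)\to a:=\Xi^{pam}(\omega)$, I then claim the right-hand side converges by the split
$$
\|T_{h^\eps}a^\eps-T_h a\|_{\mathscr H^\alpha}\le\|T_{h^\eps}a^\eps-T_{h^\eps}a\|_{\mathscr H^\alpha}+\|T_{h^\eps}a-T_h a\|_{\mathscr H^\alpha}.
$$
The first summand tends to $0$ by the Lipschitz estimate of the preceding Proposition (with $\|h^\eps\|_{\mathcal H}$ bounded) and $a^\eps\to a$; the second is a continuity statement in the $h$-slot whose only non-routine contributions are $(h^\eps-h)\circ Ka^1$ and $a^1\circ K(h^\eps-h)$, both controlled by the mixed resonant estimate of Proposition~\ref{proposition:Bony-estim}(iii) together with the Schauder estimate for $K=(-\Delta)^{-1}$ (Proposition~\ref{prop:multiply}), giving a bound of order $(1+\|h\|_{\mathcal H}+\|a^1\|_{\alpha-2})\|h^\eps-h\|_{L^2}\to0$. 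Thus the right-hand side of the identity converges to $T_h\Xi^{pam}(\omega)$ for \emph{every} $\omega\in\Omega_1$ and \emph{every} $h$.

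The decisive observation is that this forces $\Omega_1$ to be invariant under every Cameron--Martin shift. Indeed, the identity shows that for $\omega\in\Omega_1$ the left-hand side $\mathscr M(\xi^\eps(\tau_h\omega),c^\eps)$ also converges, so $\tau_h\omega\in\Omega_1$; applying this to $\pm h$ gives $\tau_h\Omega_1=\Omega_1$ for all $h\in\mathcal H$. Hence for every $\omega\in\Omega_1$ and every $h$ one may read off $\Xi^{pam}(\omega+h)=\lim_\eps\mathscr M(\xi^\eps(\omega+h),c^\eps)=T_h\Xi^{pam}(\omega)$, which is exactly the asserted full-measure, simultaneous-in-$h$ identity. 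The equivalence of laws then follows at once: since $T_h\Xi^{pam}=\Xi^{pam}\circ\tau_h$ almost surely, $(T_h\Xi^{pam})_\star\mathbb P=(\Xi^{pam})_\star\big((\tau_h)_\star\mathbb P\big)$, and as $(\tau_h)_\star\mathbb P\sim\mathbb P$ by the classical Cameron--Martin theorem with strictly positive density, pushing forward under $\Xi^{pam}$ preserves mutual absolute continuity, whence $(T_h\Xi^{pam})_\star\mathbb P\sim(\Xi^{pam})_\star\mathbb P$. I expect the main obstacle to be the uncountability of $\mathcal H$: one cannot intersect the fixed-$h$ null sets over all $h$, and it is precisely the shift-invariance of $\Omega_1$ --- available because $T_{h^\eps}a^\eps$ converges \emph{deterministically} for each $\omega\in\Omega_1$ --- that upgrades the fixed-$h$ statement to the simultaneous one, bypassing any separate separability or continuity-in-$h$ argument.
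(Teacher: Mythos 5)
Your proof is correct and takes essentially the same route as the paper's: the exact algebraic identity $\mathscr M(\xi^\eps(\omega+h),c^\eps)=T_{h^\eps}\mathscr M(\xi^\eps(\omega),c^\eps)$ at the regularized level, then passage to the limit $\eps\to0$ on the full-measure convergence set using continuity of $(h,\Xi)\mapsto T_h\Xi$, which makes that set shift-invariant and yields the simultaneous-in-$h$ identity. If anything, you fill in details the paper leaves implicit, namely the joint continuity in the $h$-slot (via the Bony resonant and Schauder estimates) and the push-forward deduction of equivalence of laws from the classical Cameron--Martin theorem.
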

\begin{proof}
Without loss of generality we can assume that $\Omega=\mathscr S'(\mathbb T^2)$ and $\mathbb P$ is the law of the white noise with zero mean and that $\xi$ is given by the projection process (ie: for $\omega\in\Omega,\quad\xi(\omega)(\phi)=\omega(\phi)$ for all $\phi\in\mathscr S(\mathbb T^2)$). Let us now define $\Xi^{pam,\eps}$ by   
$$
\Xi^{pam,\eps}(\omega):=(\omega^\eps,\omega^\eps\circ K\omega^\eps-c_\eps)
$$
with $\omega^\eps:=\sum_{k}\psi(\eps k)\hat\omega(k)e_k$ then we know that there exist a measurable set $\mathbb A$ with $\mathbb P(\mathbb A)=1$ and such that for all $\omega\in\mathbb A$
the convergence $\Xi^{pam,\eps}(\omega)\to\Xi^{pam}(\omega)$ holds in $\mathscr H^\alpha$. Now taking $\omega\in\mathbb A$ using the fact that 
$$
\Xi^{pam,\eps}(\omega+h)=T_{h^\eps}\Xi^{pam}(\omega)
$$
and the continuity of the translation operator $(h,\Xi)\mapsto T_h\Xi$ we see that $\Xi^{pam}(\omega+\eps)$ is also convergent to $T_h\Xi^{pam}(\omega)$ on the other hand we have that 
$$
\Xi^{pam}(\omega+h)=\lim_{\eps}\Xi^{pam,\eps}(\omega+h)
$$
thanks to the fact that the limit in the r.h.s exist and the fact that very realization of $\Xi^{pam}$ is the limit of $\Xi^{pam,\eps}$. This of course allow us to identify 
$$
T_h\Xi^{pam}(\omega)=\Xi^{pam}(\omega+h).
$$
\end{proof}
Now as in the case of the white noise this last result allows to get the invariance of the support by $T_h$. Indeed we have the following corollary
\begin{corollary}\label{cor:inv-tran-1}
Let $h\in \mathcal H$ and take $\Xi\in\text{supp}((\Xi^{pam})_\star\mathbb P)$ then $T_h\Xi^{pam}\in \text{supp}((\Xi^{pam})_\star\mathbb P)$.
\end{corollary}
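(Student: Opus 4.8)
The plan is to transcribe the white-noise invariance argument (the Lemma preceding Theorem~\ref{th:supp-pam}) verbatim into the enhanced setting, using the two ingredients just assembled: the continuity, indeed local Lipschitz continuity, of the translation operator $T_h$ on $\mathscr H^\alpha$ proved in the Proposition above, and the Cameron--Martin equivalence for $\Xi^{pam}$, namely that the laws of $\Xi^{pam}$ and $T_h\Xi^{pam}$ are mutually absolutely continuous. Once these are in place the proof is a soft topological argument, identical in structure to the Gaussian case, and yields the claim $T_h\Xi\in\text{supp}((\Xi^{pam})_\star\mathbb P)$.

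First I would fix $\Xi\in\text{supp}((\Xi^{pam})_\star\mathbb P)$ together with an arbitrary open neighbourhood $V$ of $T_h\Xi$. Since $T_h$ is continuous with continuous inverse $T_{-h}$, the set $U:=T_{-h}(V)$ is an open neighbourhood of $\Xi$ satisfying $T_h(U)\subseteq V$. Because $\Xi$ lies in the support, $\mathbb P(\Xi^{pam}\in U)>0$. I then estimate
$$
\mathbb P(\Xi^{pam}\in V)\geq\mathbb P(\Xi^{pam}\in T_h(U))=\mathbb P(T_{-h}\Xi^{pam}\in U),
$$
and invoke the Cameron--Martin equivalence: since the law of $T_{-h}\Xi^{pam}$ is equivalent to that of $\Xi^{pam}$ and $\mathbb P(\Xi^{pam}\in U)>0$, the right-hand side is strictly positive. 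As $V$ was an arbitrary neighbourhood of $T_h\Xi$, this shows $T_h\Xi\in\text{supp}((\Xi^{pam})_\star\mathbb P)$.

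The only genuine subtlety, rather than a true obstacle, is keeping track of the ambient topology. The support is carried by the Polish space $\mathscr X^\alpha\subseteq\mathscr H^\alpha$, so the neighbourhoods $U,V$ should be taken in the relative topology of $\mathscr X^\alpha$, and for this one needs $T_h$ to map $\mathscr X^\alpha$ into itself. This is checked on the generating set via the algebraic identity $T_h(\theta,\theta\circ K\theta-c)=(\theta+h,(\theta+h)\circ K(\theta+h)-c)$, valid for $\theta\in\mathcal H$ with $\theta+h\in\mathcal H$, so that $T_h$ preserves the defining family of $\mathscr X^\alpha$; the inclusion $T_h(\mathscr X^\alpha)\subseteq\mathscr X^\alpha$ then passes to the $\mathscr H^\alpha$-closure by continuity of $T_h$. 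With this bookkeeping the two displayed ingredients close the argument exactly as in the white-noise case, and running the same reasoning with $-h$ in place of $h$ upgrades the inclusion $T_h(\text{supp})\subseteq\text{supp}$ to the full invariance $T_h(\text{supp})=\text{supp}$.
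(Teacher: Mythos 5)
Your proof is correct and follows essentially the paper's own route: the paper states this corollary without a separate proof, explicitly pointing back to the white-noise lemma ``spelled out for later inspection,'' and you reproduce precisely that topological argument from the two ingredients just established, namely the continuity of $T_h$ on $\mathscr H^\alpha$ and the equivalence of the laws of $\Xi^{pam}$ and $T_h\Xi^{pam}$. Your extra bookkeeping step --- checking via the identity $T_h(\theta,\theta\circ K\theta-c)=(\theta+h,(\theta+h)\circ K(\theta+h)-c)$ on the generating set that $T_h$ maps $\mathscr X^\alpha$ into itself, so the argument makes sense in the ambient Polish space carrying the law --- is a detail the paper leaves implicit, and is a sound addition rather than a deviation.
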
 
Now to proceed as in the white noise case we will show that the support of $\Xi^{pam}$ contain the $0$ element. This exactly the propose of the next proposition : 
\begin{proposition}\label{prop:supp-point}
Given $a\in\mathbb R$ then there exist $\Xi\in\text{supp}((\Xi^{pam})_\star\mathbb P)$ and $h_a^k\in \mathcal H$ such that 
$$
T_{-h_a^{k}}\Xi\to^{\eps\to0}(0,-a) \quad\text{in } \mathscr H^\alpha.
$$
\end{proposition}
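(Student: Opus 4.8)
The plan is to imitate the white–noise argument behind Proposition~\ref{prop:whitenoisesupport}: I fix a single element $\Xi$ of the support and produce Cameron--Martin shifts $h^k_a\in\mathcal H$ with $T_{-h^k_a}\Xi\to(0,-a)$ as $k\to\infty$ (this is what the index $\eps\to0$ in the statement refers to), so that Corollary~\ref{cor:inv-tran-1} together with closedness of the support will — in the following step — yield $(0,-a)\in\text{supp}((\Xi^{pam})_\star\mathbb P)$. Since $\Xi^{pam}(\omega)\in\text{supp}((\Xi^{pam})_\star\mathbb P)$ for a.e.\ $\omega$, I pick a generic $\omega_0$ in a full-measure set and set $\Xi:=\Xi^{pam}(\omega_0)=(\xi,\Xi^2)$, writing $\xi:=\xi(\omega_0)$. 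The shifts are built from two pieces,
$$
h^k_a:=\xi^{\eps_k}+X^{n_k,c_k}\in\mathcal H,
$$
where $\xi^{\eps_k}:=\xi^{\eps_k}(\omega_0)$ is the mollification \eqref{eq:mollification} (smooth, zero mean, hence in $\check{\mathcal C}^\infty\subset\mathcal H$), used to cancel the irregular first component, and $X^{n_k,c_k}\in\check{\mathcal C}^\infty$ is the highly oscillatory ``pure area'' function of Lemma~\ref{lemma:pure-area}, used to tune the second component. The scales $\eps_k\to0$, amplitudes $c_k$ and frequencies $n_k$ are fixed by a diagonal procedure below.

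The first component of $T_{-h^k_a}\Xi$ is $\xi-\xi^{\eps_k}-X^{n_k,c_k}$, which tends to $0$ in $\mathcal C^{\alpha-2}$ since $\xi^{\eps_k}\to\xi$ and $\|X^{n_k,c_k}\|_{\alpha-2}\to0$. For the second component I expand using the definition of $T_{-h}$ and bilinearity of the resonant product, collecting three groups of terms: the ``self'' part $\Xi^2+Q_{\eps_k}$ with
$$
Q_\eps:=\xi^\eps\circ K\xi^\eps-\xi^\eps\circ K\xi-\xi\circ K\xi^\eps;
$$
the pure-area part $X^{n_k,c_k}\circ KX^{n_k,c_k}$, which converges to $c_k$ by Lemma~\ref{lemma:pure-area}; and the cross terms pairing $X^{n_k,c_k}$ with $\xi$ and $\xi^{\eps_k}$. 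Because $X^{n,c}$ is frequency-localised near $2^n z$, each cross term is bounded in $\mathcal C^{2\alpha-2}$ by a negative power of $2^n$ (this is where $\alpha<1$ enters), so for each $k$ I may choose $n_k$ large, depending on $\eps_k$ and $c_k$, making their total norm at most $1/k$.

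The crux is the self part. Writing $\zeta^\eps:=\xi-\xi^\eps$ and cancelling the divergent renormalisation constants, a direct computation identifies $\Xi^2+Q_\eps$ with the recentred resonant self-product of the high-pass noise $\zeta^\eps$ shifted by a constant,
$$
\Xi^2+Q_\eps=\big(\zeta^\eps\circ K\zeta^\eps-d_\eps\big)+m_\eps,\qquad d_\eps:=\mathbb E[\zeta^\eps\circ K\zeta^\eps],\quad m_\eps:=\sum_{k\neq0}\frac{|\psi(\eps k)|^2-2\psi(\eps k)}{|k|^2}.
$$
The main obstacle is to prove that $\zeta^\eps\circ K\zeta^\eps-d_\eps\to0$ in $\mathcal C^{2\alpha-2}$: this is a second–Wiener–chaos estimate of exactly the type underlying Theorem~\ref{th:Pam-exis}, now with $\zeta^\eps\to0$ in place of $\xi$, and it gives convergence in $L^2(\Omega,\mathcal C^{2\alpha-2})$; passing to a subsequence I may assume convergence at the chosen $\omega_0$ (intersecting with the full-measure sets already used fixes $\omega_0$).

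It remains to tune the constant. Since $\psi(0)=1$, the summand in $m_\eps$ behaves like $-1/|k|^2$ for $|k|\lesssim1/\eps$, so $m_\eps\to-\infty$. I therefore set $c_k:=-a-m_{\eps_k}$, which is positive for $k$ large and hence admissible in Lemma~\ref{lemma:pure-area}. With these choices the second component of $T_{-h^k_a}\Xi$ equals $\big(\zeta^{\eps_k}\circ K\zeta^{\eps_k}-d_{\eps_k}\big)+m_{\eps_k}+c_k+(\text{cross terms})+\big(X^{n_k,c_k}\circ KX^{n_k,c_k}-c_k\big)$, every group of which tends to $0$ except that $m_{\eps_k}+c_k=-a$, so the second component converges to $-a$ in $\mathcal C^{2\alpha-2}$ and $T_{-h^k_a}\Xi\to(0,-a)$ in $\mathscr H^\alpha$, as claimed. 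It is precisely this compensation of the divergent mismatch $m_\eps\to-\infty$ by the pure-area constant that forces $c$ to range over a set accumulating at $+\infty$.
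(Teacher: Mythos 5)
Your construction follows the same skeleton as the paper's proof (Lemma~\ref{lemma:con-cons}): shift $\Xi^{pam}(\omega_0)$ by a truncation of the noise plus the pure-area oscillation of Lemma~\ref{lemma:pure-area}, control the noise self-interaction by a chaos estimate, and absorb the divergent constant mismatch into the amplitude $c_k$. Your self-part bookkeeping is sound: since $m_\eps=c_\eps-2b_\eps$, one has $\Xi^2+Q_\eps-m_\eps=\Xi^2+(\xi^\eps\circ K\xi^\eps-c_\eps)-(\xi^\eps\circ K\xi-b_\eps)-(\xi\circ K\xi^\eps-b_\eps)\to 0$ directly from Theorem~\ref{th:Pam-exis} and Lemma~\ref{lemma:mixed-conv}, with no need to introduce the renormalized object $\zeta^\eps\circ K\zeta^\eps-d_\eps$ at all. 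The real problem lies elsewhere.

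The genuine gap is the cross-term step. You claim that $(\xi-\xi^{\eps_k})\circ KX^{n_k,c_k}$ and $X^{n_k,c_k}\circ K(\xi-\xi^{\eps_k})$ are \emph{deterministically} bounded in $\mathcal C^{2\alpha-2}$ by a negative power of $2^{n_k}$, so that choosing $n_k$ large makes them small. This is false, and choosing $n_k$ large makes things worse, not better. The tail $\zeta^\eps=\xi-\xi^\eps$ is \emph{not} in $\mathcal H$ (its $L^2$ norm is a.s.\ infinite); it only lies in $\mathcal C^{\alpha-2}$, so the Bony resonant estimate would require pairing it with regularity $\beta>2-\alpha>1$, while $\norm{KX^{n,c}}_\beta\sim c^{1/2}2^{n(\beta-1)}$ diverges for $\beta>1$. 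Even the sharpest block-by-block bound only gives $\norm{\zeta^\eps\circ KX^{n,c}}_{2\alpha-2}\lesssim c^{1/2}2^{n(1-\alpha)}\norm{\zeta^\eps}_{\alpha-2}$, which \emph{grows} with $n$ since $\alpha<1$. Indeed, for $n$ large with $\eps$ fixed one has $\Delta_i\zeta^\eps=\Delta_i\xi$ on the relevant blocks $i\sim n$, so your cross term is genuinely $\xi\circ KX^{n,c}$, a pairing of white noise with the oscillation; it tends to zero only in a stochastic sense (a first-chaos computation gives roughly $c^{1/2}2^{n(2\alpha-2)}$), never pathwise from the available norms. The deterministic smallness you invoke is valid for shifts $\theta\in\mathcal H$ (as in the Proposition following Lemma~\ref{lemma:pure-area}), not for the noise tail — confusing these two situations is exactly the trap. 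The paper sidesteps this by using the \emph{sharp} Fourier cutoff $\xi^n=\sum_{|k|\le\nu 2^n}\hat\xi(k)e_k$ at a scale matched to the oscillation frequency $2^n|z|$: then $\mathrm{supp}\,\mathscr F(\xi-\xi^n)\subseteq\{|k|>\nu 2^n\}$ misses (for $\nu$ large) every Littlewood--Paley block meeting $\mathrm{supp}\,\mathscr F(X^{n,c_n-a})$, so $\Delta_i(\xi-\xi^n)\Delta_j X^{n,c_n-a}=0$ whenever $|i-j|\le 1$ and the cross terms vanish \emph{identically}. To repair your proof you must either adopt this matched-scale sharp cutoff, or prove the Gaussian estimate for $\xi\circ KX^{n,c}$ and extract an a.s.\ subsequence \emph{before} fixing $\omega_0$ — note that your current order of quantifiers (fix $\omega_0$ first, then claim pathwise smallness of the cross terms) cannot be combined with a stochastic estimate.
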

To proof this proposition we will need the following preliminary result :
\begin{lemma}\label{lemma:mixed-conv}
Let $b_\eps$ defined by 
$$
b_{\eps}:=\sum_{k\in \mathbb Z^2,k\ne0}\frac{|\psi(\eps k)|}{|k|^2}
$$
then the following convergence 
$$
\xi\circ K\xi^\eps-b_\eps\to^{\eps\to0}\Xi^{2,pam},\quad\xi^\eps\circ K\xi-b_\eps\to^{\eps\to0}\Xi^{2,pam}
$$
hold in the space $L^p(\Omega, \mathcal C^{2\alpha-2}(\mathbb T^2))$ for every $p>1$.
\end{lemma}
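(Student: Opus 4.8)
The plan is to reduce both statements to the already–known convergence of the symmetric object. By Theorem~\ref{th:Pam-exis} the second component satisfies $\xi^\eps\circ K\xi^\eps-c_\eps\to\Xi^{2,pam}$ in $L^p(\Omega,\mathcal C^{2\alpha-2})$, so it suffices to prove that each mixed term differs from this symmetric term by a quantity tending to $0$ in the same topology. Writing the first difference as
\[
(\xi\circ K\xi^\eps-b_\eps)-(\xi^\eps\circ K\xi^\eps-c_\eps)=(\xi-\xi^\eps)\circ K\xi^\eps-(b_\eps-c_\eps),
\]
I would first check, by a direct Fourier/Wick computation, that $b_\eps-c_\eps$ coincides (up to a finite, convergent low-frequency correction coming from the resonance weight $\sum_{|i-j|\le1}\rho(2^{-i}k)\rho(2^{-j}k)$, which equals $1$ for all but finitely many $k$) with the expectation $\mathbb E[(\xi-\xi^\eps)\circ K\xi^\eps]$. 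Hence, modulo a vanishing constant, the difference is the projection $G_\eps:=\,:(\xi-\xi^\eps)\circ K\xi^\eps:$ of that resonant product onto the second homogeneous Wiener chaos. The identical algebra for $\xi^\eps\circ K\xi-b_\eps$ produces $:\xi^\eps\circ K(\xi-\xi^\eps):$, so both claims reduce to one statement: a centred second-chaos object in which exactly one factor carries the high-frequency multiplier $1-\psi(\eps\,\cdot)$ tends to $0$ in $L^p(\Omega,\mathcal C^{2\alpha-2})$.

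For this central estimate I would work in a Besov space with finite integrability exponent and use the embedding $\mathscr B^{\beta'}_{p,p}\hookrightarrow\mathcal C^{\beta'-2/p}$ of Proposition~\ref{proposition:Bes-emb}, choosing $\beta'=2\alpha-2+2/p$ so that the target space is $\mathcal C^{2\alpha-2}$. By Gaussian hypercontractivity (equivalence of all moments on a fixed Wiener chaos) together with stationarity of the white noise, one has $\mathbb E\|\Delta_q G_\eps\|_{L^p}^p\lesssim_p(\mathbb E|\Delta_q G_\eps(0)|^2)^{p/2}$, which reduces everything to the scalar variances $\mathbb E|\Delta_q G_\eps(0)|^2$. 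Expanding $G_\eps$ in Fourier and applying Wick's theorem, this variance is a sum over resonant pairs $k_1+k_2=k$ with $|k|\sim 2^q$ and $|k_1|\sim|k_2|$, of terms bounded by $|1-\psi(\eps k_1)|^2\,\psi(\eps k_2)^2\,|k_2|^{-4}$ (the second Isserlis pairing being of the same order). Summing out $k_1$ and then $k$ exactly as in the computation of $\|\theta\circ K\theta\|_{H^\gamma}$ in Lemma~\ref{Xalpalt} gives, for $\alpha<1$, a bound of order $1$ uniformly in $\eps$ and $q$, while the extra factor $|1-\psi(\eps k_1)|^2\le(1+\|\psi\|_\infty)^2$ tends to $0$ pointwise as $\eps\to0$ because $\psi$ is continuous at the origin with $\psi(0)=1$. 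Two successive applications of dominated convergence — first in the $k_1$-sum to obtain $\mathbb E|\Delta_q G_\eps(0)|^2\to0$ for each fixed $q$, then in the $q$-sum against the summable majorant $2^{q\beta'p}\cdot O(1)$, which is finite precisely because $\beta'p=(2\alpha-2)p+2<0$ once $p>1/(1-\alpha)$ — yield $\mathbb E\|G_\eps\|_{\mathscr B^{\beta'}_{p,p}}^p\to0$, hence $G_\eps\to0$ in $L^p(\Omega,\mathcal C^{2\alpha-2})$ for all large $p$, and therefore for every $p>1$ since $\mathbb P$ is a probability measure. The very same scheme applies verbatim with the multiplier $1-\psi(\eps\,\cdot)$ sitting in the $K$-slot, which settles $\xi^\eps\circ K\xi-b_\eps$.

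I expect the main obstacle to be bookkeeping rather than analysis: namely confirming that replacing the exact expectations $\mathbb E[\xi\circ K\xi^\eps]$ and $\mathbb E[\xi^\eps\circ K\xi^\eps]$ by the clean divergent constants $b_\eps$ and $c_\eps$ costs only a convergent error, so that the two centrings genuinely cancel in the limit and one obtains exactly $\Xi^{2,pam}$ rather than $\Xi^{2,pam}$ plus a spurious constant. This requires tracking the resonance weight and the low-frequency $\chi$-block, and — if $\psi$ is not assumed non-negative — controlling $\sum_k(|\psi(\eps k)|-\psi(\eps k))|k|^{-2}$, a sum supported on the single annulus $|k|\sim1/\eps$; this is where the distinction between $b_\eps$ (built from $|\psi|$) and the true mean (built from $\psi$) must be isolated with care. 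The variance estimate itself is the standard fixed-chaos argument and should present no difficulty beyond the resonance geometry already handled in Lemma~\ref{Xalpalt}.
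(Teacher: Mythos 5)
Your proposal is correct and follows essentially the same route as the paper's proof: reduce each mixed term to its difference with the symmetric term $\xi^\eps\circ K\xi^\eps-c_\eps$, recognize that difference as a centred second-chaos object in which one slot carries the multiplier $1-\psi(\eps\,\cdot)$, estimate the variances of its Littlewood--Paley blocks via Wick's theorem and the resonance geometry $|k|\lesssim|k_1|\sim|k_2|$, pass the factor $|1-\psi(\eps k)|^2$ through dominated convergence, and conclude by Gaussian hypercontractivity plus the Besov embedding $\mathscr B^{\beta'}_{p,p}\hookrightarrow\mathcal C^{\beta'-2/p}$. The only deviations are cosmetic: the paper proves the quantitative block bound $\mathbb E\bigl[|\Delta_q(\cdot)(x)|^2\bigr]\lesssim r(\eps)2^{2q\delta}$ with $r(\eps)\to 0$ where you run a two-step dominated-convergence argument (uniform $O(1)$ bound plus blockwise vanishing), and your bookkeeping of the gap between the clean constants $b_\eps,c_\eps$ and the true expectations (the low-frequency resonance weight and the $|\psi|$-versus-$\psi$ issue) is, if anything, more careful than the paper's, which silently identifies the two.
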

\begin{proof}
We have by a direct computation that :
$$
b_\eps=\sum_{|i-j|\leq1,k\in\mathbb Z^2;k_{12}=k}\rho(2^{-i}k_1)\rho(2^{-j}k_2)\mathbb E[\hat\xi(k_1)\hat\xi(k_2)]\psi(\eps k_1)|k_2|^{-2}e_k=\sum_{k_1}\psi(\eps k_1)|k_1|^{-2}
$$
where we wrote $k_{12}$ instead of $k_1+k_2$ for shorter notation. In view of the convergence giving in the Theorem~\ref{th:Pam-exis} it suffice to prove that :
$$
(\xi^\eps\circ K\xi-b_\eps)-(\xi^\eps\circ K\xi^\eps-c_\eps)\to^{\eps\to0}0
$$
in $L^p(\Omega,\mathcal C^{-\delta}(\mathbb T^2))$. A quick computation gives  
\begin{equation}
\begin{split}
&\left|\Delta_q((\xi^\eps\circ K\xi-b_\eps)-(\xi^\eps\circ K\xi^\eps-c_\eps))(x)\right|^2
\\&=\sum_{\substack{|i_1-i_2|,|j_1-j_2|\leq1\\k_{12}=k,k'_{12}=k'}}\rho(2^{-q}k)\rho(2^{-q}k')\Pi_{l=1}^{2}(\rho(2^{-i_l}k_l)\rho(2^{-j_l}k'_l))
 \\&\times \psi(\eps k)_1\psi(\eps k_2)(1-\psi(\eps k_2))(1-\psi(\eps k'_2))
\\&\times(\hat \xi(k_1)\hat\xi(k_2)-\mathbb E[\hat\xi(k_1)\hat\xi(k_2)])(\overline{\hat \xi(k'_1)\hat\xi(k'_2)-\mathbb E[\hat\xi(k'_1)\hat\xi(k'_2)]})e_{k-k'}(x).
\end{split}
\end{equation}
Then using the Wick theorem we obtain that : 
$$
\mathbb E[|\Delta_q((\xi^\eps\circ K\xi-b_\eps)-(\xi^\eps\circ K\xi^\eps-c_\eps))(x)|^2]=J^\eps_1+J^\eps_2
$$
with 
$$
J^\eps_1=\sum_{\substack{q\lesssim i_1\sim i_2\sim j_1\sim j_2\\k\in\mathbb Z^2,k_{12=k}}}|\rho(2^{-q}k)|^2\Pi_{l=1}^{2}(\rho(2^{-i_l}k_l)\rho(2^{-j_l}k_l))|\psi(\eps k_1)|^2|k_2|^{-4}|\psi(\eps k_2)-1|^2
$$
and remarking that this sum is restricted to the frequency $|k|\lesssim |k_2|\sim|k_1|$ we get easily :
$$
\sum_{k_{12}=k,|k|\lesssim|k_1|\sim|k_2|}\frac{|\psi(\eps k_2)-1|^2|\psi(\eps k_1)|^2}{|k_2|^4}\lesssim |k|^{-2+\delta}r(\eps)
$$
for all $\delta>0$ small enough, with 
$$
r(\eps):=\sum_{k_2\ne0}|k_2|^{-2-\delta}|\psi(\eps k_2)-1|^2\to^{\eps}0
$$ 
by dominate convergence. Then putting this last bound in the definition of $J_1^\eps$ allows to obtain the following inequality : 
$$
J^\eps_1\lesssim r(\eps)2^{2q\delta}.
$$  
To finish our argument let us observe that :
$$
J_2^\eps=\sum_{\substack{q\lesssim i_1\sim i_2\sim j_1\sim j_2\\k\in\mathbb Z^2,k_{12=k}}}|\rho(2^{-q}k)|^2\Pi_{l=1}^{2}(\rho(2^{-i_l}k_l)\rho(2^{-j_l}k_l))|\psi(\eps k_1)||\psi(\eps k_2)-1||\psi(\eps k_2)|\psi(\eps k_1)-1||k_2|^{-2}|k_1|^{-2}
$$
and then due to the fact that the sum is over the frequency $|k_1|\sim|k_2|$ we see that $J_1^\eps\sim J_2^\eps$ from which we can conclude the following bound :
$$
\mathbb E[|\Delta_q((\xi^\eps\circ K\xi-b_\eps)-(\xi^\eps\circ K\xi^\eps-c_\eps))(x)|^2]\lesssim r(\eps)2^{2q\delta}
$$ 
for all $\delta>0$ and $\rho<\delta$ and then using the Gaussian hypercontractivity (see~\cite{janson}) and the Besov embedding we get :
\begin{equation*}
\begin{split}
&||(\xi^\eps\circ K\xi-b_\eps)-(\xi^\eps\circ K\xi^\eps-c_\eps)||^p_{L^p(\Omega,\mathcal C^{-2\delta-2/p})}
\\&\lesssim||(\xi^\eps\circ K\xi-b_\eps)-(\xi^\eps\circ K\xi^\eps-c_\eps)||^p_{L^p(\Omega, B_{p,p}^{-2\delta})}
\\&\lesssim\sum_{q\geq-1}2^{-2qp\delta}\int_{\mathbb T^2}\mathbb E\left[|\Delta_q((\xi^\eps\circ K\xi-b^\eps)-(\xi^\eps\circ K\xi^\eps-c^\eps))(x)|^2\right]^{\frac{p}{2}}\dd x
\lesssim r(\eps)^{p/2}
\end{split}
\end{equation*}
which finishes the proof of the lemma.
\end{proof}
Now we are able to prove the Proposition~\ref{prop:supp-point}.
\begin{lemma}\label{lemma:con-cons}
Let us define $\xi^n$ and $c_n$  
$$
\xi^n:=\sum_{|k|\leq\nu2^n}\hat\xi(k)e_k,\quad c_n=\sum_{|k|\leq\nu 2^n}\frac{1}{|k|^2}
$$
for $\nu>0$. Then for $\nu$ large enough (depending only on the annulus and the ball given in the Littlewood-Paley decomposition) the following convergence 
$$
\lim_{n\to+\infty}T_{-\xi^n+X^{n,c_n-a}}\Xi^{pam}=(0,-a)
$$
holds in $\mathscr H^\alpha$ in probability. Where $X^{n,c_n-a}$ is given by the Lemma~\ref{lemma:pure-area}. Then to obtain the statement of the Proposition~\ref{prop:supp-point} is suffice to take $\Xi=\Xi^{pam}(\omega)$ and $h^k=\xi^{n_k}(\omega)-X^{n_k,c_{n_k}-a}$ with $\omega$ is fixed in the set of probability one for which the last convergence hold  along a subsequence $T_{-\xi^{n_k}+X^{n_k,c_{n_k}-a}}\Xi^{pam}$. 
\end{lemma}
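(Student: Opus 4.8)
The plan is to expand $T_{-\xi^n+X^{n,c_n-a}}\Xi^{pam}$ into its two components and show they tend to $0$ and $-a$. The key preliminary observation is that the sharp cutoff defining $\xi^n$ is itself an admissible mollification: with $\psi=\mathbf 1_{\{|\cdot|\le\nu\}}$ (radial, bounded, compactly supported, $\equiv1$ near $0$) and $\eps=2^{-n}$ one has $\xi^\eps=\xi^n$ and $c_\eps=b_\eps=c_n$, so that Theorem \ref{th:Pam-exis} and Lemma \ref{lemma:mixed-conv} apply verbatim; moreover $c_n=\sum_{0<|k|\le\nu2^n}|k|^{-2}$ grows like $n$, so $c_n-a\ge0$ for large $n$ and $X^{n,c_n-a}$ is well defined. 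For the first component, $(T_{-\xi^n+X^n}\Xi^{pam})^1=\xi-\xi^n+X^{n,c_n-a}$; since $\xi-\xi^n$ is supported on $|k|>\nu2^n$ and $\xi\in\mathcal C^{\alpha'-2}$ a.s.\ for any $\alpha<\alpha'<1$, the tail bound $\|\xi-\xi^n\|_{\alpha-2}\lesssim\sup_{j\gtrsim n}2^{j(\alpha-\alpha')}\|\xi\|_{\alpha'-2}\to0$ together with $\|X^{n,c_n-a}\|_{\alpha-2}\to0$ (Lemma \ref{lemma:pure-area}(1)) forces the first component to $0$.

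For the second component I would insert $h=-\xi^n+X^n$ (writing $X^n:=X^{n,c_n-a}$) into the definition of $T_h$ and sort the resulting resonant products into three groups. The noise group $A:=\Xi^{2,pam}+\xi^n\circ K\xi^n-\xi^n\circ K\xi-\xi\circ K\xi^n$ satisfies $A+c_n\to0$ in $\mathcal C^{2\alpha-2}$: subtracting $c_n$ from each bilinear term, Theorem \ref{th:Pam-exis} gives $\xi^n\circ K\xi^n-c_n\to\Xi^{2,pam}$ and Lemma \ref{lemma:mixed-conv} gives $\xi^n\circ K\xi-c_n,\ \xi\circ K\xi^n-c_n\to\Xi^{2,pam}$, so the four copies of $\Xi^{2,pam}$ cancel and exactly one $-c_n$ survives. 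The area group $B:=X^n\circ KX^n$ satisfies $B-(c_n-a)\to0$: the explicit computation in Lemma \ref{lemma:pure-area} shows $X^n\circ KX^n-(c_n-a)$ is a single Fourier mode at frequency level $n+1$ of amplitude $O(c_n-a)$, whose $\mathcal C^{2\alpha-2}$ norm is $\lesssim(c_n-a)2^{(n+1)(2\alpha-2)}\sim n\,2^{n(2\alpha-2)}\to0$, the logarithmic growth of $c_n$ being beaten by the exponential Hölder gain (recall $2\alpha-2<0$).

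The remaining cross terms form $C:=-\xi^n\circ KX^n-X^n\circ K\xi^n+X^n\circ K\xi+\xi\circ KX^n$, which I would regroup as $C=X^n\circ K(\xi-\xi^n)+(\xi-\xi^n)\circ KX^n$. The crucial claim, and the main obstacle, is that $C\equiv0$ once $\nu$ is large. Indeed $X^n$ and $KX^n$ are single-frequency at $2^nz$, hence supported in Littlewood--Paley blocks $\Delta_j$ with $j$ within $O(1)$ of $n$, whereas $\xi-\xi^n$ and $K(\xi-\xi^n)$ live on $|k|>\nu2^n$, i.e.\ in blocks $\Delta_j$ with $j\ge n+\log_2(\nu/b_0)$, with $b_0$ the outer radius of the annulus supporting $\rho$. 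Choosing $\nu$ large enough, depending only on the ball and annulus radii of the Littlewood--Paley partition, separates these two frequency ranges by more than one octave, so no pair with $|i-j|\le1$ enters either resonant product and $C$ vanishes identically. This frequency separation is exactly the role of the hypothesis on $\nu$; without it one would face a genuinely delicate resonant interaction between the rough noise (not in $L^2$, so the $L^2$ trick used for $\theta\in\mathcal H$ is unavailable) and the highly oscillatory pure-area approximation.

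Combining the three groups, $(T_{-\xi^n+X^n}\Xi^{pam})^2=(A+c_n)+(B-(c_n-a))+C-a\to-a$, while the first component tends to $0$; since $A$ converges in every $L^p(\Omega)$ and the remaining pieces converge pathwise, the convergence $T_{-\xi^n+X^n}\Xi^{pam}\to(0,-a)$ holds in $\mathscr H^\alpha$ in probability, as asserted. To deduce Proposition \ref{prop:supp-point} I would extract a subsequence $n_k$ along which the convergence is almost sure, fix $\omega$ in the intersection of that full-measure set with $\{\Xi^{pam}(\omega)\in\text{supp}((\Xi^{pam})_\star\mathbb P)\}$, and put $\Xi=\Xi^{pam}(\omega)$ and $h_a^k=\xi^{n_k}(\omega)-X^{n_k,c_{n_k}-a}\in\check{\mathcal C}^\infty\subset\mathcal H$ (mean-free, since $\hat\xi(0)$ is omitted and $X^{n_k}$ has zero mean), giving $T_{-h_a^k}\Xi\to(0,-a)$.
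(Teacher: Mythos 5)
Your proposal is correct and follows essentially the same route as the paper: the same expansion of $T_{-\xi^n+X^{n,c_n-a}}\Xi^{pam}$ into the noise group (handled by Theorem~\ref{th:Pam-exis} and Lemma~\ref{lemma:mixed-conv}), the pure-area term (handled by the explicit computation of Lemma~\ref{lemma:pure-area} with $c_n\lesssim n$ beaten by $2^{-2n(1-\alpha)}$), and the cross terms killed identically by choosing $\nu$ large enough to separate the Littlewood--Paley supports, followed by the same a.s.\ subsequence extraction. Your explicit observation that the sharp cutoff $\psi=\mathbf 1_{\{|\cdot|\le\nu\}}$ is an admissible mollifier with $c_\eps=b_\eps=c_n$ is a point the paper uses only implicitly, and making it explicit is a welcome clarification rather than a deviation.
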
 
\begin{proof}
We have by definition that 
$$
\left (T_{-\xi^n+X^{n,c_n-a}}\Xi^{pam}\right )^1=\xi-\xi^n+X^{n,c_n-a}.
$$
To prove that the right hand side of this equality converge to 0 is suffice to remark that $||\xi^n-\xi||_{\alpha-2}\to^{n\to+\infty}0$ and then it suffice to prove that 
$$
||X^{n,c_n-a}||_{\alpha-2}\to^{n\to+\infty}0
$$
but following the proof of the Lemma \ref{lemma:pure-area} we see that 
$$
||X^{n,c_n-a}||_{\alpha-2}\lesssim (c_n)^{1/2}\max(2^{-n(1-\alpha)},\chi(2^nk)).
$$
Then recall in that 
$$
c_n=\sum_{|k|\leq \nu2^n k\in\mathbb Z^2}|k|^{-2}\lesssim_{\nu}n
$$
we deduce easily that 
$$
||X^{n,c_n-a}||_{\alpha-2}\lesssim n^{1/2}\max(2^{-n(1-\alpha)},2^n\chi(2^nk))\to0.
$$
Which is gives the needed convergence for the first component of $T_{-\xi^n+X^{n,c_n}}\Xi^{pam}$. Now by definition we have that 
\begin{equation*}
\begin{split}
(T_{-\xi^n+X^{n,c_n-a}}\Xi^{pam})^2&= \Xi^{pam,2}+\xi^n\circ K\xi^n-\xi^n\circ K\xi-\xi\circ K\xi^n+(\xi-\xi^n)\circ  KX^{n,c_n-a}
\\&+X^{n,c_n-a}\circ K(\xi-\xi^n)+X^{n,c_n-a}\circ  KX^{n,c_n-a}.
\end{split}
\end{equation*}
And let us remark that 
$$
\text{supp}(\mathscr F(\xi^n-\xi))\subseteq\{|k|>\nu2^n\}
$$ 
and that $$
\text{supp}(\mathscr F(X^{n,c_n-a}))\subseteq\{|k|=2^n|z|\}
$$
and then we can choose $\nu$ large enough (depending only on the size of the annulus and the ball which given in the definition of $\chi$ and $\rho$) such that  
$$
\Delta_i(\xi^n-\xi)\Delta_j(X^{n,c_n-a})=0
$$ 
for $|i-j|\leq1$. And then we get immediately that 
$$
X^{n,c_n-a}\circ K(\xi-\xi^n)=(\xi-\xi^n)\circ X^{n,c_n-a}=0
$$
for all $n$. Then we see that 
\begin{equation*}
\begin{split}
(T_{-\xi^n+X^{n,c_n-a}}\Xi^{pam})^2&=\Xi^{pam,2}+(\xi^n\circ K\xi^n-c_n)+(c_n-\xi^n\circ K\xi)
\\&+(c_n-\xi\circ K\xi^n)+(X^{n,c_n-a}\circ  KX^{n,c_n-a}-(c_n-a))-a.
\end{split}
\end{equation*}
Now using the Lemma~\ref{lemma:mixed-conv} we can see that 
$$
||(\xi^n\circ K\xi^n-c_n)+(c_n-\xi^n\circ K\xi)+(c_n-\xi\circ K\xi^n)||_{2\alpha-2}\to^{n\to+\infty}0
$$
in probability.  To obtain the needed convergence is suffice to show that 
$$
||X^{n,c_n-a}\circ  KX^{n,c_n-a}-(c_n-a)||_{2\alpha-2}\to^{n\to+\infty}0.
$$
Once again following the argument given in the Lemma~\ref{lemma:pure-area} we see easily that 
$$
||X^{n,c_n}\circ  KX^{n,c_n}-(c_n-a)||_{2\alpha-2}\lesssim n\max(2^{-2n(1-\alpha)},\chi(2^n|z|))\to^{n\to+\infty}0
$$
we have used the fact that $c_n\lesssim n$. This completes the proof.
\end{proof}
Take $\Xi\in\text{supp}((\Xi^{pam})_\star\mathbb P)$ and $h^k\in\mathcal H$ such that :
$$
T_{-h^k}\Xi\to^{k\to+\infty}(0,-a)\quad  \text{in }\mathscr H^\alpha
$$ 
(this is possible thanks the Proposition~\ref{prop:supp-point}). Moreover we know that the support of $\Xi^{pam}$ is invariant by translation, then $T_{h^k}\Xi^{pam}\in\text{supp}((\Xi^{pam})_\star\mathbb P)$ for all $k$ which give us that $(0,-a)\in\text{supp}((\Xi^{pam})_\star\mathbb P)$. Once again the invariance by translation give us that 
$$
\mathscr X^\alpha\subseteq\text{supp}((\Xi^{pam})^\star\mathbb P)
$$  
which finishes the proof of the Theorem \ref{th:supp-pam}. 
\\
Before going into the proof of the Theorem~\ref{th:main-result} let us observe the fact that the constant $c$ can't dropped from the space $\mathscr X^\alpha$, indeed we claim that 
\begin{lemma}
\label{ref:strict}
Given $\alpha\in(2/3,1)$, then the closure of the set 
$$
\{(h,h\circ Kh),\quad h\in\mathcal H\}
$$
in the space $\mathscr H^\alpha$ is strictly embedded in $\mathscr X^\alpha$.
\end{lemma}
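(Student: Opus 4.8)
The plan is to exhibit a continuous linear functional on $\mathscr H^\alpha$ which is sign-definite on $\{(h,h\circ Kh):h\in\mathcal H\}$ but changes sign on $\mathscr X^\alpha$; this separates the closure of the former from the latter and forces strictness. First note that the inclusion $\overline{\{(h,h\circ Kh):h\in\mathcal H\}}\subseteq\mathscr X^\alpha$ is automatic, because each $(h,h\circ Kh)$ equals $(h,h\circ Kh-c)$ with $c=0$ and hence lies in the generating family of $\mathscr X^\alpha$ from Definition~\ref{def:rd}, whose closure is $\mathscr X^\alpha$. So the whole content is strictness, for which it suffices to produce a single point of $\mathscr X^\alpha$ lying outside the left-hand closure. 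The functional I would use is the spatial mean of the second component,
$$
\ell(\Xi)=\ell(\Xi^1,\Xi^2):=\widehat{\Xi^2}(0)=\int_{\mathbb T^2}\Xi^2(x)\,\dd x.
$$

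Next I would verify that $\ell$ is continuous on $\mathscr H^\alpha$, despite $2\alpha-2<0$. The key observation is that the zeroth Fourier mode of any $g\in\mathcal C^{2\alpha-2}(\mathbb T^2)$ is carried entirely by the lowest Littlewood--Paley block: since $\chi(0)=1$ while $\rho$ vanishes at the origin, one has $\widehat g(0)=\widehat{\Delta_{-1}g}(0)$, whence $|\ell(\Xi)|=|\int_{\mathbb T^2}\Delta_{-1}\Xi^2|\lesssim\|\Delta_{-1}\Xi^2\|_{L^\infty}\lesssim\|\Xi^2\|_{2\alpha-2}\le\|\Xi\|_{\mathscr H^\alpha}$. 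Thus $\ell$ is bounded and linear, so its level set $\{\ell\ge0\}$ is closed.

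The heart of the argument is the positivity $\ell(h,h\circ Kh)=\widehat{h\circ Kh}(0)\ge0$ for all $h\in\mathcal H$. Writing $\varphi_i$ for the Fourier multiplier of $\Delta_i$ (so $\varphi_{-1}=\chi$ and $\varphi_i=\rho(2^{-i}\cdot)$ for $i\ge0$) and using $\widehat{Kh}(k)=|k|^{-2}\hat h(k)$ for $k\neq0$, only the diagonal pairs $k_1=-k_2=k$ contribute to the zeroth mode of the resonant product, giving
$$
\widehat{h\circ Kh}(0)=\sum_{k\neq0}\frac{|\hat h(k)|^2}{|k|^2}\,W(k),\qquad W(k):=\sum_{|i-j|\le1}\varphi_i(k)\,\varphi_j(k).
$$
Here I used that $h$ is real, so $\hat h(-k)=\overline{\hat h(k)}$, and that $\chi,\rho$ are radial, so $\varphi_j(-k)=\varphi_j(k)$. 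The decisive structural point is that the Littlewood--Paley functions $\chi,\rho$ were chosen \emph{nonnegative}, so that $\varphi_i(k)\ge0$ and hence $W(k)\ge0$ termwise; the series converges since $h\in\mathcal H\subseteq L^2$. Therefore $\widehat{h\circ Kh}(0)\ge0$, and by continuity of $\ell$ this passes to the closure: $\ell\ge0$ on $\overline{\{(h,h\circ Kh):h\in\mathcal H\}}$.

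Finally I would exhibit a point of $\mathscr X^\alpha$ with negative mean. Taking $\theta=0$ and any $c>0$ in the generating family shows that $(0,-c)$ is literally a member of $\{(\theta,\theta\circ K\theta-c):\theta\in\mathcal H,c>0\}\subseteq\mathscr X^\alpha$, while $\ell(0,-c)=\int_{\mathbb T^2}(-c)=-c<0$. Hence $(0,-c)\in\mathscr X^\alpha\setminus\overline{\{(h,h\circ Kh):h\in\mathcal H\}}$, proving the strict inclusion. The only real obstacle is the positivity computation in the previous paragraph, and it succeeds precisely because the noise is purely spatial: $\theta\circ K\theta$ has a sign-definite mean, which is the structural reason (announced after Theorem~\ref{th:main-result}) that the renormalisation constant $c$ cannot be discarded.
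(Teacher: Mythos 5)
Your proof is correct, but it follows a genuinely different route from the paper's. The paper argues by contradiction: assuming $(h_n,h_n\circ Kh_n)\to(0,-1)$ in $\mathscr H^\alpha$, it invokes the identity $\Delta(Kh_n)^2=2|\nabla Kh_n|^2-2h_nKh_n$, shows via the Bony estimates that $h_nKh_n\to-1$ and $\Delta(Kh_n)^2\to0$ in $\mathcal C^{\alpha-2}$, and concludes $|\nabla Kh_n|^2\to-1$, which is impossible for nonnegative functions. You instead produce an explicit separating functional: $\ell(\Xi)=\widehat{\Xi^2}(0)$ is continuous on $\mathscr H^\alpha$ (your reduction to the $\Delta_{-1}$ block via $\chi(0)=1$, $\rho(0)=0$ is correct), nonnegative on $\{(h,h\circ Kh):h\in\mathcal H\}$ by the diagonal Fourier computation, yet equal to $-c\,|\mathbb T^2|<0$ at $(0,-c)\in\mathscr X^\alpha$. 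The two arguments rest on the same structural fact --- nonnegativity of the mean of $h\circ Kh$ --- and are in fact quantitatively identical: since $\varphi_i\varphi_j\equiv0$ for $|i-j|>1$, your weight satisfies $W(k)=\sum_{|i-j|\le1}\varphi_i(k)\varphi_j(k)=\bigl(\sum_i\varphi_i(k)\bigr)^2=1$, so that $\ell(h,h\circ Kh)=\sum_{k\ne0}|k|^{-2}|\hat h(k)|^2=\|\nabla Kh\|_{L^2}^2$, exactly the quantity whose positivity drives the paper's contradiction. What your route buys: it is linear and quantitative (the closure is trapped in the closed half-space $\{\ell\ge0\}$, which misses a whole open region of $\mathscr X^\alpha$), it avoids the nonlinear product identity and the distributional-positivity step, and it isolates where the assumption $\chi,\rho\ge0$ enters (though by $W\equiv1$ even that is dispensable). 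What the paper's route buys: it recycles machinery already in place (Bony and Schauder estimates) with no Fourier-side bookkeeping. Two small repairs you should make: justify the interchange of the sums over $(i,j)$ and over $k$ (absolute convergence, since $h\in L^2$ and $|k|^{-2}\le1$ for $k\ne0$, so this is a one-line Fubini remark), and for $h\in\mathcal H$ not smooth cite Lemma \ref{Xalpalt} rather than Definition \ref{def:rd} for the membership of $(h,h\circ Kh)$ in the generating family of $\mathscr X^\alpha$.
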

\begin{proof}
Let assume that there exist $h_n$ in $\mathcal H$ such that $(h_n, h_n\circ Kh_n)$ converge in $\mathscr H^\alpha$ to $(0,-1)$. Then the point is that now 
$$
\Delta(Kh_n)^2=2|\nabla Kh_n|^2-2h_nKh_n
$$
with $\nabla$ is the gradient operator. Then using the fact that $h_n\to^{n\to+\infty} 0$ in $\mathscr C^{\alpha-2}$ and thus $\Delta(Kh_n)^2\to^{n\to+\infty}0$ in $\mathscr C^{\alpha-2}(\mathbb T^2)$. On the other side using the Bony estimates~\eqref{proposition:Bony-estim} and the fact that $h_n\circ Kh_n\to^{n\to+\infty}-1$ we obtain easily that : 
$$
h_nKh_n=h_n\prec Kh_n+h_n\succ Kh_n+h_n\circ Kh_n\to^{n\to+\infty}-1
$$ 
in the space $\mathscr C^{\alpha-2}(\mathbb T^2)$. Which allow us to conclude that $2|\nabla Kh_n|^2\to-1$ in the space $\mathscr C^{\alpha-2}(\mathbb T^2)$ which is of course impossible and thus a such sequence can't exist which end the proof due to the fact $(0,-1)\in\mathscr X^\alpha$.

\end{proof}

\section{Proof of the Theorem~\ref{th:main-result}}
We know by construction that $u=\mathscr S_{r}(u_0,\Xi^{pam})$ and that  $\Xi^{pam}\in\mathscr X^\alpha$ a.s. then we can conclude that there exist $\theta^n\in \mathcal H$ and such that $\Xi^{pam}=\lim_{n}(\theta^n,\theta^n\circ K\theta^n-c_n)$ which by the continuity of the map $\mathscr S_r$ give 
$$ 
u=\lim_{n}\mathscr S_r(u_0,(\theta^n,\theta^n\circ K\theta^n-c_n))=\lim\mathscr S_c(u_0,\theta^n,c_n)
$$
a.s in $C([0,T],\mathcal C^\alpha(\mathbb T^2))$. We then have that 
$$
\text{supp}(u_\star\mathbb P)\subseteq\overline{\left\{\mathscr S(u_0,h,c),\quad h\in \mathcal H,c>0\right\}}^{C([0,T],\mathcal C^{\alpha}(\mathbb T^2))}
$$
The other inclusion is more interesting. Now, 
$$
\text{supp}((\Xi^{pam})_\star\mathbb P)=\overline{\left\{(\theta,\theta\circ K\theta-c)\quad \theta\in \mathcal H,c>0 \right\}}^{\mathscr H^\alpha} 
$$ 
ensures that for any $\eta>0$, $c>0$ and $\theta\in L^2(\mathbb T^2)$  we have :
$$
\mathbb P\left(||\Xi^{pam}-\mathscr M(\theta,c)||_{\mathscr H^\alpha}<\eta\right)>0.
$$
Let $\delta>0$ then by the continuity of $\mathscr S_{r}$ there exist $\eta:=\eta(\delta,\theta,c)>0$ such that $||\Xi^{pam}-\mathscr M(\theta,c)||_{\mathscr H^\alpha}\leq\eta\Rightarrow||u-\mathscr S(u_0,\theta,c)||_{C([0,T],\mathcal C^\alpha(\mathbb T^2))}\leq\delta$ and then 
$$
\mathbb P\left(||u-\mathscr S(u_0,h,c)||_{C([0,T],\mathcal C^\alpha(\mathbb T^2))}\leq\delta\right)\geq\mathbb P\left(||\Xi^{pam}-\mathscr M(\theta,c)||_{\mathscr H^\alpha}<\eta\right)>0.
$$
Which proves the first identity set of Theorem~\ref{th:main-result}. At this point let us observe that
$$
u[a]=\mathscr S_r(u^0,\Xi^{pam}+(0,-a)).
$$
Since the support of the law of $\Xi^{pam}+(0,-a)$ is equal to the support of the law of $\Xi^{pam}$ the identity~\eqref{invariance} follows.

\section{Appendix}

\begin{proposition}\label{prop:class}
Let $T>0$. Given $f\in C^3_b(\mathbb R)$, $u_0\in L^2(\mathbb T^2)$ and $h\in \mathcal H$ then there exists a unique global solution $v\in C([0,T]; L^{2}(\mathbb T^2))$ to the equation :
$$
\mathscr Lv=f(v)h-cf'(v)f(v),\quad u(0,x)=u_0(x)
$$
Moreover the map $h\mapsto v$ is continuous from $\mathcal H$ to $C(\mathbb R^+,L^2(\mathbb T^2))$
\end{proposition}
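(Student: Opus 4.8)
The only genuine difficulty in this otherwise classical result is the product $f(v)h$: in dimension two $h \in \mathcal H = \check L^2(\mathbb T^2)$ is too rough for this product to be handled naively in $L^2$, and one must exploit the smoothing of the heat flow. The zeroth-order term is harmless, since $f \in C^3_b$ makes $v \mapsto cf'(v)f(v)$ globally Lipschitz from $L^2$ to $L^2$ and bounded in $L^\infty$. For \textbf{local well-posedness} I would set up the mild formulation
$$v(t) = e^{t\Delta}u_0 + \int_0^t e^{(t-s)\Delta}\big[f(v(s))h - cf'(v(s))f(v(s))\big]\,\dd s$$
and run a Banach fixed point in $C([0,\tau];L^2)$. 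For two candidates the difference of the source terms splits into the Lipschitz $L^2\to L^2$ part from $cf'f$ and the part $(f(v_1)-f(v_2))h$. Since $|f(v_1)-f(v_2)| \le \|f'\|_\infty|v_1-v_2|$ and $h\in L^2$, Cauchy--Schwarz gives $(f(v_1)-f(v_2))h \in L^1$ with $L^1$-norm $\lesssim \|h\|_{L^2}\|v_1-v_2\|_{L^2}$. In $d=2$ the heat semigroup satisfies $\|e^{t\Delta}\|_{L^1\to L^2} \lesssim t^{-1/2}$, and $t^{-1/2}$ is integrable, so the Duhamel term contributes a factor $\lesssim \sqrt\tau\,\|h\|_{L^2}$; for $\tau$ small (depending only on $\|h\|_{L^2}$ and $\|f\|_{C^1_b}$) the map is a contraction, giving a unique local solution.

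To upgrade to $[0,T]$ I would derive an a priori $L^2$ estimate. Testing the equation against $v$ and using $\|f\|_\infty < \infty$,
$$\tfrac12\tfrac{\dd}{\dd t}\|v\|_{L^2}^2 + \|\nabla v\|_{L^2}^2 \le \|f\|_\infty\|h\|_{L^2}\|v\|_{L^2} + c\|f'\|_\infty\|f\|_\infty|\mathbb T^2|^{1/2}\|v\|_{L^2},$$
so that $\tfrac{\dd}{\dd t}\|v\|_{L^2}^2 \le C(1+\|v\|_{L^2}^2)$ and Gronwall bounds $\sup_{[0,T]}\|v(t)\|_{L^2}$ in terms of $\|u_0\|_{L^2}$, $\|h\|_{L^2}$, $T$ only. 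This rules out blow-up and extends the solution to all of $[0,T]$; integrating the same inequality gives $v\in L^2([0,T];H^1)$, and together with $\partial_t v \in L^2([0,T];H^{-1})$ one obtains $v\in C([0,T];L^2)$ by the Lions--Magenes lemma.

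For \textbf{continuity of} $h\mapsto v$, take $h_1,h_2\in\mathcal H$ with solutions $v_1,v_2$, set $w=v_1-v_2$ so $w(0)=0$ and
$$\mathscr L w = (f(v_1)-f(v_2))h_1 + f(v_2)(h_1-h_2) - c\big[f'(v_1)f(v_1) - f'(v_2)f(v_2)\big].$$
Testing against $w$, the last term is $\le C\|w\|_{L^2}^2$ and the middle one $\le \|f\|_\infty\|h_1-h_2\|_{L^2}\|w\|_{L^2}$. The first term is the crux, and here I would use the two-dimensional Gagliardo--Nirenberg inequality $\|w\|_{L^4}^2 \lesssim \|w\|_{L^2}\|\nabla w\|_{L^2} + \|w\|_{L^2}^2$ to get
$$\int (f(v_1)-f(v_2))h_1\, w \le \|f'\|_\infty\|h_1\|_{L^2}\|w\|_{L^4}^2 \le \tfrac12\|\nabla w\|_{L^2}^2 + C(\|h_1\|_{L^2})\|w\|_{L^2}^2,$$
absorbing the gradient into the dissipation on the left. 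Collecting terms gives $\tfrac{\dd}{\dd t}\|w\|_{L^2}^2 \le C\|w\|_{L^2}^2 + C\|h_1-h_2\|_{L^2}^2$ with constants uniform once $\|h_1\|_{L^2}$ is bounded, so Gronwall with $w(0)=0$ yields $\sup_{[0,T]}\|w(t)\|_{L^2}^2 \lesssim \|h_1-h_2\|_{L^2}^2$, i.e. local Lipschitz continuity, which is more than required; setting $h_2=h_1$ in the same computation re-proves uniqueness.

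The single point requiring care, and the main obstacle, is precisely the product $f(v)h$ with $h$ only in $L^2$ in two space dimensions. Both in the existence step (via the $L^1\to L^2$ heat smoothing with integrable singularity $t^{-1/2}$) and in the continuity step (via Gagliardo--Nirenberg absorption into the parabolic dissipation) this borderline integrability is tamed by trading regularity against the smoothing of $\mathscr L$; everything else is a routine application of energy estimates and Gronwall's inequality.
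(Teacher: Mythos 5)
Your proposal is correct, and its core -- the mild formulation plus a Banach fixed point in $C([0,\tau];L^2)$, with the troublesome product $(f(v_1)-f(v_2))h$ placed in a weak space and recovered through heat-flow smoothing with an integrable time singularity -- is exactly the paper's mechanism, only packaged differently: you bound the product in $L^1(\mathbb T^2)$ and use $\|e^{t\Delta}\|_{L^1\to L^2}\lesssim t^{-1/2}$, whereas the paper bounds the Fourier coefficients of the product by $\|f(v_1)-f(v_2)\|_{L^2}\|h\|_{\mathcal H}$ (Cauchy--Schwarz on the convolution), i.e.\ places it in $H^\gamma$ with $\gamma\in(-2,-1)$ and pays $(t-s)^{\gamma/2}$; these are the same estimate in physical versus frequency space. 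Where you genuinely diverge is in the globalization and in the continuity of $h\mapsto v$. The paper never leaves the fixed-point framework: since $f\in C^3_b$, the contraction time is independent of $\|u_0\|_{L^2}$, so the global solution is obtained by simply iterating the local result, and continuity in $h$ is proved by running the same Duhamel estimates on $v^{h_1}-v^{h_2}$ over small time intervals and iterating, which yields the local Lipschitz bound $\|v^{h_1}-v^{h_2}\|_{C([0,T],L^2)}\lesssim_{R,f}\|h_1-h_2\|_{\mathcal H}$. You instead use energy methods: an a priori $L^2$ bound plus Gronwall for globalization (valid, though redundant, since you yourself observe that the contraction time depends only on $\|h\|_{\mathcal H}$ and $f$, so iteration would already conclude), and, for continuity, a testing argument where the term $\int |w|^2|h_1|$ is controlled by the two-dimensional Ladyzhenskaya/Gagliardo--Nirenberg inequality and absorbed into the dissipation. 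Your route buys the extra information $v\in L^2([0,T];H^1)$ and local Lipschitz dependence with an argument that would survive weaker hypotheses on $f$; the paper's route buys the convenience of never having to justify an energy identity for a mild solution.

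One point in your continuity step deserves to be made explicit. To invoke Lions--Magenes for the difference $w=v_1-v_2$ you need the forcing in $L^2([0,T];H^{-1})$, but pointwise in time $(f(v_1)-f(v_2))h_1$ is only bounded in $L^1(\mathbb T^2)$, and in two dimensions $L^1$ just fails to embed into $H^{-1}$. The fix is the same inequality you already use: solutions in the energy class lie in $L^4([0,T];L^4(\mathbb T^2))$, so $|w|\,|h_1|\in L^2([0,T];L^{4/3}(\mathbb T^2))\subset L^2([0,T];H^{-1/2}(\mathbb T^2))$ by the dual Sobolev embedding. This is a two-line repair, not a gap, but without it the testing computation for $w$ is formal.
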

\begin{proof}
Let $a,b\in L^{2}(\mathbb T^2)$ then by a direct computation we get :
$$
|\mathscr{F}((f(a)-f(b))h)(k)|=\left|\sum_{k_1+k_2=k}\mathscr F(f(a)-f(b))(k_1)\mathscr F(h)(k_2)\right|\lesssim||f(a)-f(b)||_{L^2(\mathbb T^2)}||h||_{\mathcal H}
$$
moreover we have that $||f(a)-f(b)||_{L^{2}}\lesssim ||f'||_{L^{\infty}(\mathbb R)}||a-b||_{L^{2}(\mathbb T)}$. And then 
$$
||(f(a)-f(b))h||_{H^\gamma}\lesssim ||f'||_{L^{\infty}(\mathbb R)}||h||_{\mathcal H}||a-b||_{L^{2}(\mathbb T^2)}
$$
for all $\gamma<-1$. Now is suffice to remark that if  $h\in C([0,T], H^\gamma(\mathbb T^2))$ and denoting by $\mathcal P_t=e^{t\Delta}$ the heat flow then the following bound : 
$$
\left|\left|\int_0^t\mathcal P_{t-s}h_s\dd s\right|\right|_{L^2(\mathbb T^2)}\lesssim\int_0^t||\mathcal P_{t-s}h_s||_{L^2}\dd s\lesssim\int_0^t(t-s)^{\gamma/2}\dd s||h||_{C([0,T],H^\gamma(\mathbb T^2))}\lesssim T^{1+\gamma/2}||h||_{C([0,T],H^\gamma(\mathbb T^2))}
$$
hold for $\gamma>-2$. Introducing the map $\Gamma:C([0,T],L^2(\mathbb T^2))\to C([0,T],L^2(\mathbb T^2))$ defined by 
$$
\Gamma_T(v)=P_tu_0+\int_0^t\mathcal P_{t-s}f(v_s)h\dd s-c\int_0^t\mathcal P_{t-s}f'(v_s)f(v_s)\dd s
$$
Due to the last computation this map is well defined moreover it satisfy the following bound :  
\begin{equation*}
\begin{split}
&||\Gamma_T(u)-\Gamma_T(v)||_{C([0,T],L^2(\mathbb T^2))}
\\&\lesssim_c T^{1+\gamma/2}(1+||f||_{L^\infty(\mathbb R)}+||f'||_{L^{\infty}(\mathbb R)}+||f''||_{L^\infty(\mathbb R)})^2
 ||h||_{\mathcal H}||u-v||_{C([0,T],L^2(\mathbb T^2))}
\end{split}
\end{equation*}
for $T<1$ and some $\gamma\in(-2,-1)$. Then choosing $T^\star$ small enough we can see that $\Gamma$ become a contraction on $C([0,T],L^2(\mathbb T^2))$ into itself and then it admit a unique fix point $v^h$. Due to the fact that $T^\star$ does not depend on $||u_0||_{L^2}$ we can iterate our result to obtain a global solution. Now we will us focus on the continuity of the map $h\mapsto v^h$ and let $h_1,h_2\in\mathcal H$ and $R>0$ such that $||h_1||+||h_2||\leq R$ then we have by definition that :
$$
v^{h_1}-v^{h_2}=\int_0^t\dd s\mathcal P_{t-s}f(v^{h_1})(h_1-h_2)+\int_0^t\dd s\mathcal P_{t-s}(f(v^{h_2})-f(v^{h_1}))h_2+c\int_0^t\dd s\mathcal P_{t-s}(g(v^{h_1})-g(v^{h_2}))
$$
with $g=f'f$. Due to the estimate used to stand the fixed point argument we easily get that : 
\begin{equation*}
\begin{split}
&||v^{h_1}-v^{h_2}||_{C([0,T],L^2(\mathbb T^2))}
\\&\lesssim T||f||_{L^\infty(\mathbb T^2)}||h_2-h_1||_{\mathcal H}+T^{\gamma/2+1}(R||f'||_{L^\infty(\mathbb R)}+c||g'||_{L^\infty(\mathbb R)})||v^{h_1}-v^{h_2}||_{C([0,T],L^2(\mathbb T^2))}
\end{split}
\end{equation*}
then choosing $T_1>0$ small enough such that $T_1^{\gamma/2+1}(R||f'||_{L^\infty(\mathbb R)}+c||g'||_{L^\infty(\mathbb R)})<1/2$ allow to obtain that 
$$
||v^{h_1}-v^{h_2}||_{C([0,T_1],L^2(\mathbb T^2))}\lesssim T_1||f||_{L^\infty(\mathbb T^2)}||h_2-h_1||_{\mathcal H}
$$
Now iterating this procedure allow to get finally that 
$$
||v^{h_1}-v^{h_2}||_{C([0,T],L^2(\mathbb T^2))}\lesssim_{R,f} ||h_2-h_1||_{\mathcal H}
$$
for every $T>0$, which end the proof.
\end{proof}
Recall that $\check.$ indicates zero-mean of elements in the appropriate function spaces. 
\begin{proposition}
\label{prop:laplacian-inverse}
Let $T>0$, the map :
$$
\theta \in \check{H}^\alpha  \mapsto -\Delta \theta  \in \check{H}^{\alpha-2}
$$
is invertible. In particular, its inverse 
$$
K:  \check{H^\alpha} \to \check{H}^{\alpha+2}
$$
is well-defined and is a continuous linear operator. The same statement holds if we replace the space $\check{H}^\alpha(\mathbb T^d)$ by $\check{\mathcal C}^\alpha(\mathbb T^d)$.
\end{proposition}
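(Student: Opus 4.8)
The plan is to realize both $-\Delta$ and its candidate inverse $K$ as Fourier multipliers on the torus and to exploit the fact that the zero-mean constraint removes precisely the single frequency at which $-\Delta$ degenerates. Writing $\theta = \sum_{k \in \mathbb{Z}^d} \hat\theta(k) e_k$, one has $-\Delta e_k = |k|^2 e_k$, so $-\Delta$ acts on the Fourier side as multiplication by the symbol $|k|^2$. This symbol vanishes only at $k=0$; since every element of $\check{H}^\alpha$ (and of $\check{\mathcal{C}}^\alpha$) satisfies $\hat\theta(0) = 0$, the relevant frequencies obey $|k| \geq 1$ and the symbol is bounded below there. I would therefore define $K$ by
\begin{equation*}
\widehat{K\theta}(k) = \begin{cases} |k|^{-2}\,\hat\theta(k), & k \neq 0,\\ 0, & k = 0,\end{cases}
\end{equation*}
and check directly on Fourier coefficients that $(-\Delta) K\theta = \theta$ and $K(-\Delta)\theta = \theta$ for every zero-mean $\theta$; both identities are immediate from $|k|^2\,|k|^{-2} = 1$ on $k \neq 0$. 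By construction $\widehat{K\theta}(0) = 0$, so $K$ maps the zero-mean subspace into itself, i.e. into $\check{H}^{\alpha+2}$ (resp. $\check{\mathcal{C}}^{\alpha+2}$). This settles the algebraic invertibility and the linearity; it remains to prove the two continuity bounds.

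For the H\"older--Besov scale I would simply invoke the Schauder estimate, Proposition~\ref{prop:multiply}, with $m = 2$ and symbol $\psi(x) = |x|^{-2}$, which is smooth on $\mathbb{R}^d \setminus \{0\}$ and satisfies $|D^k \psi(x)| \lesssim |x|^{-2-k}$ for all $k$. This yields $\|K\theta\|_{\alpha+2} \lesssim \|\theta\|_{\alpha}$, hence continuity of $K \colon \check{\mathcal{C}}^\alpha \to \check{\mathcal{C}}^{\alpha+2}$; continuity of $-\Delta$ in the opposite direction is the same statement applied with $m = -2$ and $\psi(x) = |x|^{2}$, whose derivatives obey the required bounds trivially.

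For the Sobolev scale the cleanest route is a direct Plancherel computation rather than the Littlewood--Paley definition. Using the standard equivalence of the $\mathscr{B}^\alpha_{2,2}$ norm with $\bigl(\sum_k (1+|k|^2)^\alpha |\hat\theta(k)|^2\bigr)^{1/2}$, together with $(1+|k|^2)^\beta \sim |k|^{2\beta}$ uniformly over $k \neq 0$, I would estimate
\begin{equation*}
\|K\theta\|_{H^{\alpha+2}}^2 \sim \sum_{k \neq 0} |k|^{2(\alpha+2)}\, |k|^{-4}\, |\hat\theta(k)|^2 = \sum_{k \neq 0} |k|^{2\alpha}\, |\hat\theta(k)|^2 \sim \|\theta\|_{H^\alpha}^2,
\end{equation*}
which shows $K \colon \check{H}^\alpha \to \check{H}^{\alpha+2}$ is bounded (indeed an isomorphism up to norm equivalence), and the reverse bound for $-\Delta$ follows identically.

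There is no serious analytic obstacle here: the entire content of the statement is the single observation that the zero-mean ($\check{\cdot}$) restriction excises the unique frequency $k=0$ at which the symbol $|k|^2$ degenerates, so that both $-\Delta$ and $K$ become bounded Fourier multipliers on these subspaces. The only point requiring a little care is to confirm that $K\theta$ is genuinely a zero-mean element of the target space and that $K$ commutes with the Littlewood--Paley projections on the zero-mean subspace, so that the application of Proposition~\ref{prop:multiply} is legitimate; both facts are immediate from the Fourier-multiplier description above.
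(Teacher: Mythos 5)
Your proof is correct and follows essentially the same route as the paper: define $K$ as the Fourier multiplier $|k|^{-2}$ on the zero-mean subspace, verify invertibility on Fourier coefficients, use Plancherel for the Sobolev bound and Proposition~\ref{prop:multiply} with $\psi(x)=|x|^{-2}$ for the H\"older--Besov bound. Your version is in fact slightly more careful than the paper's (e.g.\ noting the equivalence $(1+|k|^2)^\beta \sim |k|^{2\beta}$ on $k \neq 0$ rather than asserting exact norm equality), but the substance is identical.
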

\begin{proof}
For $f\in\mathcal S'(\mathbb T^2)$ with $\hat f(0)=0$ the equation 
$$
-\Delta\theta=f,\quad\hat\theta(0)=0
$$
admit a unique solution $\theta\in\mathscr S'(\mathbb T^2)$ defined by $\hat\theta(k)= |k|^{-2}\hat f(k)$ for $k\ne0$ and $\hat\theta(0)=0$. Moreover by a direct computation we see that  if $f\in\check{H}^{\alpha}$ then $||\theta||_{H^{\alpha+2}}=||f||_{H^\alpha}$ which gives the statement for the Sobolev space. Now if $f\in\check{\mathcal C}^\alpha(\mathbb T^d)$ we have by a direct application of the Proposition~\ref{prop:multiply} that $||\theta||_{\alpha+2}\lesssim||f||_{\alpha}$ and this finishes the proof.
\end{proof}
Now the following lemma ensure that the constant $c$ can't be set it to zero without cost.
\begin{lemma}
\label{lemma:non}
Let $\alpha<1$, $f$ the identity function and $u^0\equiv1$ then in this case the closure of the set 
$$
\left\{\mathscr S(u^0,h,0),\quad h\in\mathcal H\right\}
$$
in the space $C([0,T],\mathcal C^\alpha(\mathbb T^2))$ is strictly contained in the support of the law of $u$ characterized in the Theorem~\ref{th:main-result}.
\end{lemma}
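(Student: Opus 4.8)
The plan is to separate a single element of $\text{supp}(u_\star\mathbb P)$ from the entire $c=0$ closure by means of a continuous functional. The natural candidate is $\mathscr S(u^0,0,1)$: taking $h=0$, $c=1$ in \eqref{equ:5} with $f=\mathrm{id}$ and $u^0\equiv 1$ reduces the equation to $\mathscr L v=-v$, $v(0,\cdot)\equiv 1$, whose unique solution is the spatially constant trajectory $v(t,x)=e^{-t}$. Since $0\in\mathcal H$ and $c=1>0$, this trajectory lies in $\{\mathscr S(u^0,h,c):h\in\mathcal H,\ c>0\}$, hence in $\text{supp}(u_\star\mathbb P)$ by Theorem~\ref{th:main-result}. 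That the $c=0$ closure is \emph{contained} in the support is immediate from the proposition preceding Lemma~\ref{ref:strict} (which shows $(h,h\circ Kh)\in\mathscr X^\alpha$ is a limit of lifts with $c>0$) together with the continuity of $\mathscr S_r$; so the substance of the lemma is the strictness, for which it suffices to prove that $e^{-\cdot}$ is not in the $c=0$ closure.

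To this end I would use the spatial mean $m_v(t):=\int_{\mathbb T^2}v(t,x)\,\dd x$ (with $\mathbb T^2$ normalized to unit volume). As the pairing with the constant function $1$, the map $v\mapsto(t\mapsto m_v(t))$ is continuous from $C([0,T],\mathcal C^\alpha(\mathbb T^2))$ into $C([0,T],\mathbb R)$. The target satisfies $m_{e^{-\cdot}}(t)=e^{-t}<1$ for every $t>0$, so it suffices to show that every $c=0$ solution obeys $m_v(t)\ge 1$ for all $t\ge 0$: this property is preserved by the continuous mean functional, hence passes to the whole closure, and then $e^{-\cdot}$ cannot belong to it.

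To establish $m_v\ge 1$ I would first treat smooth zero-mean $h$, where $v=\mathscr S(u^0,h,0)$ solves $\mathscr L v=vh$, $v(0,\cdot)\equiv 1$ classically and, by the parabolic maximum principle (equivalently Feynman--Kac), remains strictly positive and bounded away from $0$ on $[0,T]\times\mathbb T^2$. Then $\phi(t):=\int_{\mathbb T^2}\log v(t,x)\,\dd x$ is well defined and, using $\partial_t v=\Delta v+vh$, integration by parts on the torus, and $\int_{\mathbb T^2}h\,\dd x=0$,
\begin{equation*}
\phi'(t)=\int_{\mathbb T^2}\frac{\partial_t v}{v}\,\dd x=\int_{\mathbb T^2}\frac{\Delta v}{v}\,\dd x+\int_{\mathbb T^2}h\,\dd x=\int_{\mathbb T^2}\frac{|\nabla v|^2}{v^2}\,\dd x\ge 0.
\end{equation*}
Hence $\phi(t)\ge\phi(0)=0$, and Jensen's inequality for the concave logarithm yields $\log m_v(t)\ge\phi(t)\ge 0$, i.e. $m_v(t)\ge 1$.

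For general $h\in\mathcal H$ I would pass to the limit: smooth zero-mean functions are dense in $\mathcal H$, and by the remark following Lemma~\ref{Xalpalt} one has $\mathscr S(u^0,h,0)=\mathscr S_r(u^0,(h,h\circ Kh))\in C([0,T],\mathcal C^\alpha(\mathbb T^2))$ with $\mathscr S(u^0,h_n,0)\to\mathscr S(u^0,h,0)$ in $C([0,T],\mathcal C^\alpha(\mathbb T^2))$ whenever $h_n\to h$ in $\mathcal H$. Thus $m_{v^{h_n}}\to m_{v^{h}}$ uniformly and the bound $m_{v^h}(t)\ge 1$ survives. The main obstacle is precisely the inequality $m_v\ge 1$: one must secure strict positivity and enough regularity of $v$ to run the logarithmic computation at the level of smooth $h$, and then transfer the bound to all of $\mathcal H$ and to the closure purely by continuity. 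The mechanism making the average non-decreasing is the integration-by-parts identity combined with the zero-mean constraint $\int h\,\dd x=0$; it is exactly this constraint (which a genuine positive renormalization constant $c$ breaks) that forbids the decaying trajectory $e^{-t}$ in the $c=0$ regime.
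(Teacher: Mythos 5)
Your proof is correct and follows essentially the same route as the paper: there one also targets the decaying exponential $e^{ct}$ ($c<0$), assumes it is approximated in $C([0,T],\mathcal C^\alpha)$ by $c=0$ solutions $v_n$ of $\mathscr L v_n = v_n h_n$, uses Feynman--Kac positivity to set $\tilde v_n=\log v_n$, and integrates $\mathscr L\tilde v_n=|\nabla \tilde v_n|^2+h_n$ over $[0,T]\times\mathbb T^2$ (using $\int_{\mathbb T^2}h_n=0$) to get $\int_{\mathbb T^2}\tilde v_n(T,x)\,\dd x\ge 0$, contradicting the uniform convergence $\tilde v_n(T,\cdot)\to cT<0$. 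Your deviations are only organizational: you package the same log-transform identity into a closed inequality $m_v\ge 1$ via Jensen so that the separation is effected by the continuous mean functional on the whole space, and you treat rough $h\in\mathcal H$ by density and continuity of $\mathscr S_r$ rather than invoking Feynman--Kac directly for $L^2$ potentials.
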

\begin{proof}
Let $v$ the unique solution of the equation 
$$
\mathscr L v=cv,\quad v(0,x)=1
$$
for some fixed $c<0$. Of course $v$ have the explicit formula $v(t,x)=e^{ct}$. Now let $v_n$ a sequence of function which converge to $v$ in $C([0,T],\mathcal C^\alpha(\mathbb T^2))$ and such that  
\begin{equation*}
\label{eq:app}
\mathscr Lv_n=v_nh_n,\quad v_n(0,x)=1
\end{equation*}
for some $h_n\in \mathcal H$. By the Feynman-Kac formula we have immediately that
$$
v_n(t,x)=\mathbb E\left[e^{\int_0^th_n(B_s+x)\dd s}\right]>0
$$
with $B$ is a Brownian motion. Then if we set $\tilde v_n=\log v_n$ we can see that $\tilde v$ satisfy the following equation 
$$
\mathscr L\tilde v_n=|\nabla\tilde v_n|^2+h_n, ,\quad \tilde v_n(0,x)=0.
$$
By passing to the integral on $[0,T]\times\mathbb T^2$ in this last equation and observing that $\int_{\mathbb T^2}h_n=0$ we get   
$$
\int_{\mathbb T^2} \tilde v_n(T,x) dx =\int_0^T\int_{\mathbb T^2}|\nabla\tilde v_n(t,x)|^2\dd x\dd t\geq0.
$$
Now the point is that $\tilde v_n(T,x)$ converges uniformly in $x$ (actually in $\mathcal C^\alpha$) to $\log v(T,x) \equiv cT<0$ and then we conclude that $v$ cannot be approximated by a sequence which satisfies the equation~\eqref{eq:app} which ends the proof.
\end{proof}

\begin{bibdiv}
 \begin{biblist}

\bib{A.milletM.Sole}{article}{
author={Bally, Vlad}
author={Millet, Annie} 
author={Sanz-Sole, Marta}
title={Approximation and Support Theorem in Holder Norm for Parabolic Stochastic Partial Differential Equations} 
journal={The Annals of Probability}
year={1995}  
pages={178--222}  
}

\bib{A.}{article}{
author={Romain, Allez}
author={Chouk, Khalil} 
title={The continuous Anderson Hamiltonian in dimension two} 
journal={arXiv:1511.02718}
year={2015}  

}

\bib{BCD-bk}{book}{
author={H. Bahouri},
author={J-Y. Chemin},
author={R. Danchin},
title={\it Fourier analysis and nonlinear partial differential equations.},
 series={Grundlehren der Mathematischen Wissenschaften [Fundamental Principles of Mathematical Sciences].},
 volume={343},
 publisher={Springer},
 place={Heidelberg},
 date={2011}}
\bib{bony}{article}{
author={J.-M. Bony}, 
title= {Calcul symbolique et propagation des singularit�s
pour les �quations aux d�riv�es partielles
non lin�aires}, 
journal={Ann. Sci. �cole Norm. Sup.}, 
number={14}, 
year={1981},
pages={209�246},
}

\bib{CFG15x}{unpublished}{
author = {Cannizzaro, G.},
author = {Friz, P.K.},
author = {Gassiat, P.},
    title = {Malliavin Calculus for regularity structures: the case of gPAM. ArXiv 1511.08888},
  journal = {ArXiv e-prints},
   eprint = {1511.08888},
     year = {2015},
}

\bib{CC13}{unpublished}{
author = {Catellier, R.},
author = {Chouk, K.},
    title = {Paracontrolled Distributions and the 3-dimensional Stochastic Quantization Equation. ArXiv 1310.6869, 2013.},
  journal = {ArXiv e-prints},
   eprint = {1310.6869},
 keywords = {Mathematics - Probability, Mathematical Physics},
   adsurl = {http://adsabs.harvard.edu/abs/2013arXiv1310.6869C},
  adsnote = {Provided by the SAO/NASA Astrophysics Data System}
}

\bib{FH14}{book}{
    AUTHOR = {Friz, Peter K. },
    AUTHOR = {Hairer, Martin},
     TITLE = {A Course on Rough Paths: With an Introduction to Regularity Structures},
    SERIES = {Springer Universitext},
     PUBLISHER = {Springer},
     YEAR = {2014},
     PAGES = {252},
      ISBN = {978-3-319-08331-5},
}

\bib{FLS}{article}{
  AUTHOR = {Friz, P.},
  AUTHOR = {Lyons, T.},
  AUTHOR = {Stroock, D.},
     TITLE = {L\'evy's area under conditioning},
   JOURNAL = {Ann. Inst. H. Poincar\'e Probab. Statist.},
  FJOURNAL = {Annales de l'Institut Henri Poincar\'e. Probabilit\'es et
              Statistiques},
    VOLUME = {42},
      YEAR = {2006},
    NUMBER = {1},
     PAGES = {89--101},
      ISSN = {0246-0203},
     CODEN = {AHPBAR},
   MRCLASS = {60H05 (60H10)},
  MRNUMBER = {2196973 (2007i:60062)},
MRREVIEWER = {Daniel Ocone},
       DOI = {10.1016/j.anihpb.2005.02.003},
       URL = {http://dx.doi.org/10.1016/j.anihpb.2005.02.003},
}

\bib{FrizVictoir}{book}{
   author={Friz, P. K.},
   author={Victoir, N. B.},
   title={Multidimensional stochastic processes as rough paths},
   series={Cambridge Studies in Advanced Mathematics},
   volume={120},
   note={Theory and applications},
   publisher={Cambridge University Press},
   place={Cambridge},
   date={2010},
   pages={xiv+656},
   isbn={978-0-521-87607-0},
   review={\MR{2604669 (2012e:60001)}},
}

\bib{gubinelli_paraproducts_2012}{article}{
	title = {Paracontrolled distributions and singular PDEs}
	author = {Gubinelli, M.},
	author={Imkeller, P.},
	author={Perkowski, N.},
	year = {2012},
	journal = {Forum of Mathematics, Pi},
	volume = {3},
	year = {2015},
	issn = {2050-5086},
	doi = {10.1017/fmp.2015.2},
	URL = {http://journals.cambridge.org/article_S2050508615000025},
}

\bib{hairer_theory_2013}{article}{	
year={2014},
issn={0020-9910},
journal={Inventiones mathematicae},
doi={10.1007/s00222-014-0505-4},
title={A theory of regularity structures},
volume={198},
number={2}, 
publisher={Springer Berlin Heidelberg},
keywords={60H15; 81S20; 82C28},
author={Hairer, M.},
pages={269-504},
language={English}
}

\bib{HP14}{unpublished}{
	title = {A Wong-Zakai theorem for stochastic PDEs, http://arxiv.org/abs/1409.3138},
	url = {http://arxiv.org/abs/1409.3138},
	journal = {{arXiv} preprint {arXiv:1409.3138}},
	author = {Hairer, M.},
	author={Pardoux, E.},
	year = {2014},
}

\bib{hairer_weber_LDP}{unpublished}{
	title = {Large deviations for white-noise driven, nonlinear stochastic PDEs in two and three dimensions, http://arxiv.org/abs/1404.5863},
	url = {http://arxiv.org/abs/1404.5863},
	journal = {{arXiv} preprint {arXiv:1404.5863}},
	author = {Hairer, M.},
	author = {Weber, H.},
	year = {2014},
	}

\bib{hairer_solving_2013}{article}{
	title = {Solving the {KPZ} equation},
	volume = {178},
	issn = {0003-{486X}},
	url = {http://annals.math.princeton.edu/2013/178-2/p04},
	doi = {10.4007/annals.2013.178.2.4},
	number = {2},
	Journal= {Annals of Mathematics},
	author = {Hairer, M.},
	year = {2013},
	pages = {559--664}
}

 \bib{janson}{book}{,
	title = {Gaussian Hilbert Spaces},
	isbn = {9780521561280},
	language = {en},
	publisher = {Cambridge University Press},
	author = {Janson, S.},
	year = {1997},
}

\bib{LQZ02}{article}{
    AUTHOR = {Ledoux, M. },
    AUTHOR = {Qian, Z.},
    AUTHOR = {Zhang, T.},
     TITLE = {Large deviations and support theorem for diffusion processes
              via rough paths},
   JOURNAL = {Stochastic Process. Appl.},
  FJOURNAL = {Stochastic Processes and their Applications},
    VOLUME = {102},
      YEAR = {2002},
    NUMBER = {2},
     PAGES = {265--283},
      ISSN = {0304-4149},
     CODEN = {STOPB7},
   MRCLASS = {60H10 (60F10 60J60)},
  MRNUMBER = {MR1935127 (2003m:60152)},
MRREVIEWER = {Alexander Veretennikov},
}

\bib{LQ02}{book}{
    AUTHOR = {Lyons, Terry J.},
    AUTHOR = {Qian, Zhongmin},
     TITLE = {System control and rough paths},
    SERIES = {Oxford Mathematical Monographs},
      NOTE = {Oxford Science Publications},
 PUBLISHER = {Oxford University Press},
   ADDRESS = {Oxford},
      YEAR = {2002},
     PAGES = {x+216},
}

\bib{LCL07}{book}{
    AUTHOR = {Lyons, Terry J.},
    AUTHOR = {Caruana, Michael},
    AUTHOR = {L{\'e}vy, Thierry},
     TITLE = {Differential equations driven by rough paths},
    SERIES = {Lecture Notes in Mathematics},
    VOLUME = {1908},
      NOTE = {Lectures from the 34th Summer School on Probability Theory held in Saint-Flour, July 6--24, 2004, With an introduction concerning the Summer School by Jean Picard},
 PUBLISHER = {Springer},
   ADDRESS = {Berlin},
      YEAR = {2007},
     PAGES = {xviii+109},
      ISBN = {978-3-540-71284-8; 3-540-71284-4},
}

\bib{L98}{article}{
    AUTHOR = {Lyons, Terry J.},
     TITLE = {Differential equations driven by rough signals},
   JOURNAL = {Rev. Mat. Iberoamericana},
  FJOURNAL = {Revista Matem\'atica Iberoamericana},
    VOLUME = {14},
      YEAR = {1998},
    NUMBER = {2},
     PAGES = {215--310},
}

\bib{Tr}{book}{
Author={Hans-Jurgen Schmeisser and Hans Triebel} 
title={Topics in Fourier analysis and function
spaces} 
volume={42} 
 NOTE ={Akademische Verlagsgesellschaft Geest and Portig K.-G., Leipzig,
1987}
}

\bib{Hu}{article}{
author={Y.Hu}
title={Chaos Expansion of Heat Equations with White Noise Potentials}
journal={Potential Analysis
}
volume={16}
pages={45-66}
}

\bib {StVa72}{article}{
    AUTHOR = {Stroock, Daniel W.},
    AUTHOR = {Varadhan, S. R. S.},
     TITLE = {On the support of diffusion processes with applications to the
              strong maximum principle},
 BOOKTITLE = {Proceedings of the Sixth Berkeley Symposium on Mathematical
              Statistics and Probability (Univ. California, Berkeley,
              Calif., 1970/1971), Vol. III: Probability theory},
     PAGES = {333--359},
 PUBLISHER = {Univ. California Press},
   ADDRESS = {Berkeley, Calif.},
      YEAR = {1972},
   MRCLASS = {60J60},
  MRNUMBER = {MR0400425 (53 \#4259)},
MRREVIEWER = {A. D. Wentzell (Fentcel)},
}

\end{biblist}
\end{bibdiv}

\end{document}